\Crefname{ALC@unique}{Line}{Lines}
\renewcommand{\Box}{\framebox{\rule{0.3em}{0.0em}}}
\newtheorem{theorem}{Theorem}[section]
\newtheorem{lem}{Lemma}[section]
\newtheorem{prop}{Proposition}[section]
\newtheorem{ex}{Example}[section]
\newtheorem{definition}{Definition}[section]
\newtheorem{algorithm}{Algorithm}[section]
\newtheorem{assumption}{Assumption}[section]
\newcommand{\setd}{{ d \kern -.15em l}}
\newcommand{\hatsetd}{ d \hat{\kern -.15em l }}
\newcommand{\dd}{\mathsf {d\kern -0.07em l}} 
\newcommand{\bgeqn}{\begin{eqnarray}}
\newcommand{\edeqn}{\end{eqnarray}}
\newcommand{\bgeq}{\begin{eqnarray*}}
\newcommand{\edeq}{\end{eqnarray*}}
\newcommand{\bec}{\begin{center}}
\newcommand{\enc}{\end{center}}
\newcommand{\R}{{\rm I\!R}}
\newcommand{\F}{{\cal F}}
\newcommand{\be}{\begin{equation}}
\newcommand{\ee}{\end{equation}}
\def\bbe{{\Bbb{E}}} 
\renewcommand{\Box}{\hfill \rule{2.3mm}{2.3mm}}
\title{
Preference Robust Modified Optimized Certainty Equivalent 
}
\author{
Qiong Wu\footnote{Department of Systems Engineering and Engineering Management, The Chinese University of Hong Kong, Hong Kong. Email: qiwu@se.cuhk.edu.hk
}
\,\,and\,\,
Huifu Xu\footnote{Department of Systems Engineering and Engineering Management, The Chinese University of Hong Kong, Hong Kong. Email: hfxu@se.cuhk.edu.hk}
}
\date{\today}
\begin{document}

\maketitle

\begin{abstract}

Ben-Tal and Teboulle \cite{BTT86} introduce the concept of optimized certainty equivalent (OCE) of an uncertain outcome
as the maximum present value of a combination of the
cash to be taken out from
the uncertain
income at present
and the expected utility value of the remaining
uncertain income.
In this paper, we consider two
variations of the OCE. First, we
introduce a modified OCE by
maximizing the combination of
the utility of the cash and the expected utility of the remaining
uncertain income so that the combined quantity is in a unified
utility value. Second, we consider
a situation where the true utility function is unknown
but it is possible to use partially available information to construct a
set of plausible utility functions.
To mitigate the risk arising from the ambiguity, we introduce a robust model where
the modified OCE is based on the worst-case utility function from the ambiguity set.
In the case when the ambiguity set of utility functions is constructed by
a Kantorovich ball centered at a nominal utility function,
we show how  the modified OCE and the corresponding worst case utility function
can be identified by solving two linear programs alternatively.
We also show the robust modified OCE is statistically robust
in a data-driven environment where the underlying data are potentially contaminated.
Some preliminary numerical results  are reported to demonstrate the performance of the modified OCE and the robust modified OCE model.

\vspace{0.3cm}
\noindent
\textbf{Keywords.} {Robust modified optimized certainty equivalent,
ambiguity of utility function, Kantorovich ball,
piecewise linear approximation,
error bounds, statistical robustness
}

\end{abstract}

\section{Introduction}
\label{sec:intro}
Let $(\Omega,\mathcal F, \mathbb{P})$ be a probability space with $\sigma$ algebra $\mathcal F$ and probability measure $\mathbb{P}$
and $\xi:(\Omega,\mathcal F, \mathbb{P})\to \R$ be a random variable representing future income of a decision maker (DM).
The {\em optimized
	certainty equivalent} of $\xi$
is defined as
\begin{equation}
{\rm (OCE)}\quad \quad \displaystyle{S_u(\xi):=\sup_{x\in \R} \;\; \{x+\bbe_P[u(\xi-x)]\}},
\label{eq:OCE}
\end{equation}
where $u:\R\to \R$ is the decision maker's utility function and
$P:=\mathbb{P}\circ\xi^{-1}$ is the probability measure on $\R$ induced by $\xi$.
The concept is first introduced by Ben-Tal and Teboulle \cite{BTT86} and closely related to other notions of certainty equivalent and risk measures, see \cite{BTT07} for a comprehensive discussion.
The economic interpretation of this notion is that the decision maker may need to consume part of $\xi$ at present, denoted by $x$, the sure present value of $\xi$ under the consumption plan becomes $x+\bbe_{P}[u(\xi-x)]$, and
the optimized certainty equivalent
$S_u(\xi)$ gives rise to the optimal allocation of the consumption which maximizes the sure present value of $\xi$.
As a  measure, it enjoys
a number of nice properties including
constancy ($S_u(C)=C$ for constant $C$),
risk aversion ($S_u(\xi)\leq \bbe_P[\xi]$) and
translation invariance ($S_u(\xi+C)=S_u(\xi)+C$). In particular, if $u$ is a normalized exponential utility function, it coincides with
the classical certainty equivalent $u^{-1}(\bbe_P[u(\xi)])$ in the literature of economics.
{\color{black}Moreover,  if 
$u(t)=-\frac{1}{\alpha}(-t)_+$ where $\alpha\in (0,1)$ and  $(t)_+=\max\{t,0\}$ for $t\in\R$, then the OCE effectively recovers the conditional value-at-risk (CVaR):  
\begin{eqnarray}
\displaystyle
S_u(\xi)
&=& \sup_{x\in\R} \;\; \left\{x-\frac{1}{\alpha}\bbe_P[(-\xi+x)_+]\right\}\nonumber\\
&=&-\inf_{x\in\R} \;\; \left\{x+\frac{1}{\alpha}\bbe_P[(-\xi-x)_+]\right\}=
-\text{ CVaR}_\alpha(-\xi).
\label{eq:CVaR}
\end{eqnarray}
The last equality is Rockafellar and Uryasev's formulation of CVaR, see \cite{BTT07,RoU00}.
Since CVaR is average of quantile, 
it is also known as average value-at-risk (AVaR), tail value-at-risk (TVaR) 
and expected shortfall, see \cite{PfR07,OgR02}. 

}


In this paper, we revisit the subject OCE from two perspectives.
One is to consider a modified version of
the optimized certainty equivalent
\begin{equation}
{\rm (MOCE)}\quad \quad   \displaystyle{M_u(\xi):=\sup_{x\in\R} \;\; \{u(x)+\bbe_P[u(\xi-x)]\}}.
\label{eq:OCE-new}
\end{equation}
The modification is motivated to align the sure present value
of $\xi$ to the expected utility theory \cite{VNM47}
by considering
the {\em utility} of present consumption $u(x)$ instead of the monetary value $x$. 
{\color{black}
Recall that in Von Neumann-Morgenstern 
expected utility theory \cite{VNM47},  the utility function is used to represent the decision maker's 
preference relation over a prospect 
space including both random and deterministic prospects, and such representation is unique
up to positive linear transformation. 
This means we can use both $u$ and $100u$ to represent the DM's preference. However, 
the two utility functions would lead to completely 
different optimal values and optimal solutions 
in the OCE model. In contrast, 
 the optimal solution is not affected in the MOCE model, 
 and the optimal value is only affected by the same scale of the utility function. 
In our view, this kind of ``invariance'' of the optimal allocation $x^*$ and ``scalability'' w.r.t. the utility function is important 
because the optimal decision on the allocation/consumption $x$
should be
determined by the DM's 
risk preference 
irrespective of its equivalent representations.


The modified OCE model
may be regarded as a special case of the well known consumption/investment models 
in economics \cite{VNM47,Hak75,Coc09} where $u(x)$ is the utility of the current consumption/investment
whereas $\bbe[u(\xi-x)]$ is the expected utility of the remaining asset to be consumed/invested in future. 
In these models, the utility functions
for the current consumption and future consumption are identical.
It is also possible to use different utility functions when the consumption 
at present is used for a new investment or production. 
}

{\color{black}
The other is to consider a situation where
the decision maker's utility function $u(\cdot)$ is ambiguous, in other words, there is incomplete information to identify a utility function $u$ which captures the decision maker's true utility preference.}
Consequently we propose to consider a robust optimized certainty equivalent measure
\begin{equation}
{\rm (RMOCE)}\quad \quad  \displaystyle{R(\xi):=\sup_{x\in\R} \inf_{u\in {\cal U}}\;\; \{u(x)+\bbe_P[u(\xi-x)]\}},
\label{eq:OCE-new-robust}
\end{equation}
where ${\cal U}$ is a set of plausible utility functions
consistent with the observed utility preferences of the decision maker. The definition is in line with the philosophy of robust optimization where the optimized certainty equivalent value is based on the worst case utility function from set  ${\cal U}$
to mitigate the risk arising from potential inaccurate use or misuse of the utility function.
By convention, we call ${\cal U}$ the ambiguity set.
In the case that ${\cal U}$ is a singleton, RMOCE reduces to MOCE.
Note that
RMOCE should be differentiated from
the  distributionally robust formulation of
OCE by Wisemann et al.~\cite{WKS14} where
the focus is on the ambiguity of $P$.
In decision analysis, $P$ is known as a
decision maker's belief of the state of nature
whereas $u$
characterizes the decision maker's
taste for risk/utility.
The RMOCE model concerns the ambiguity of
decision maker's taste rather than belief.

Ambiguity of utility preference is a well discussed topic in behavioural economics.
For instances, Thurstone \cite{Thu27} regards such ambiguity as a lack of accurate description
of human behaviour.
Karmarkar \cite{Kar78} and  Weber \cite{Web87}
ascribe the ambiguity to cognitive difficulty and incomplete information. The ambiguity may also arise in
the decision making problems which
involve several stakeholders who fail to reach a consensus.
Parametric and non-parametric approaches have subsequently been proposed to assess the true utility function, including
discrete choice models (Train \cite{Tra09}), standard and paired gambling approaches for preference comparisons and certainty
equivalence (Farquhar \cite{Far84}), we refer
readers to Hu et al.~\cite{HuBM18} for an excellent overview on this.

{\color{black}
In decision making under uncertainty, 
a decision maker may choose the worst case utility function among a set of 
plausible utility functions representing his/her risk preference 
to mitigate the overall risk. This kind of research may be traced back to 
Maccheroni \cite{Mac02}. Cerreia-Vioglio et al.~\cite{CDO15}
seem to be the first to 
investigate ambiguity of decision maker's utility function in the 
certainty equivalent model $u^{-1}(\bbe[u(\xi)])$ by
considering the worst-case certainty equivalent 
from a given set of utility functions in their cautious expected utility model.
They show that the DM's risk preference can be represented by a worst-case certainty equivalent if and only if they are given by a binary relation satisfying the weak order, continuity, weak monotonicity and negative certainty independence (NCI) (NCI states that if
a sure outcome is not 
enough to compensate the DM for a risky prospect, then its mixture with another lottery which reduces the certainty appeal, will not be more attractive than the same mixture of the risky prospect and the lottery).

Armbruster and Delage \cite{AmD15} 
give a comprehensive treatment of the topic from 
minimax preference robust optimization (PRO) perspective.  
Specifically,
}
they propose to use  available information of the decision maker's utility preference
such as preferring certain lotteries over other lotteries and being risk averse, $S$-shaped or prudent
to
construct an ambiguity
set of plausible utility functions
and then base the optimal decision
on the worst case utility function from the ambiguity set.
Hu and Mehrotra \cite{HuM15}  consider a probabilistic representation of the class of increasing concave utility functions by
confining them to a compact interval and normalizing
them with range $[0,1]$.
In doing so, they propose a
moment-type
framework for constructing
the ambiguity set of the decision maker's utility preference
which covers a number of important approaches such as
the certainty equivalent and pairwise comparison.
Hu and Stepanyan
\cite{HuS17} propose a so-called reference-based almost
stochastic dominance method for constructing
a set of utility functions
near a reference utility
which satisfies certain stochastic dominance relationship
and use the set to characterize the decision maker's preference.
{\color{black}
Over the past few years, the research on PRO has received increasing attentions
in the communities of stochastic/robust optimization and risk management, see
for instances \cite{HHX18,HXH22,DGX22,ZhX21,Li21,LCX21} and references therein.}

{\color{black}
In both (MOCE) and (RMOCE) models, the true probability distribution $P$ is assumed to be known. In the data driven problems, the true $P$ is 
unknown
but it is possible to use empirical data to construct an approximation of $P$.
    Unfortunately, such data may be contaminated and consequently we may be concerned
    by the quality of the MOCE values calculated as such. This kind of issue
    is well studied in robust statistics \cite{HuR09} and can be traced down to 
    earlier work of Hample \cite{Ham71}.
Cont et al.~\cite{CDS10} first study the quality of 
the plug-in estimators of 
law invariant risk measures using Hampel's classical concept of qualitative robustness \cite{Ham71}, 
that is, the plug-in estimator of a risk functional is said to be qualitatively robust 
if it is insensitive to the variation of sampling data. 
According to Hampel's theorem, Cont et al.~\cite{CDS10} 
demonstrate that the qualitative robustness of a 
plug-in estimator 
is equivalent to the weak continuity of the risk functional 
and 
that value at risk (VaR) is qualitatively robust whereas conditional value at risk (CVaR) is not.
Kr\"atschmer et al.~\cite{KSZ12} argue that the use of Hampel's classical concept of qualitative robustness may be problematic 
because it requires the risk measure essentially to be insensitive with respect to the tail behaviour of the random variable 
and the recent financial crisis shows that a faulty estimate of tail behaviour can lead to a drastic underestimation of the risk. 
Consequently, they propose a refined notion of qualitative robustness that applies 
also to tail-dependent statistical functionals and that allows 
one to compare statistical functionals in regards to their degree of robustness. 
The new concept captures the trade-off between robustness and sensitivity and can be quantified by an index of qualitative robustness. 
Guo and Xu \cite{GuX21SR} take a step forward by deriving quantitative statistical robustness of PRO models. Xu and Zhang \cite{XuZ21} extend the analysis to distributionally robust optimization models.
}

	In this paper, we consider a situation where the decision maker has a
	nominal utility function but is short of complete information as to whether
	it is the true. Consequently we propose to use the Kantorovich ball centered at the nominal utility function as
	the ambiguity set.
	We begin
	with piecewise linear utility (PLU) functions 
	defined over a convex and closed interval of $\R$
	and show that the inner minimization problem in the definition of RMOCE can be reformulated as a linear program when $\xi$ has a finite discrete distribution. We then
	propose an iterative algorithm to compute the RMOCE
	by solving the inner minimization problem and outer maximization problem alternatively.
	
	To extend the scope of the proposed computational method, we extend the discussion to the cases that the utility functions
	are not necessarily piecewise linear and the domain of the utility function is unbounded. We derive error bounds
	arising from using PLU-based RMOCE to approximate the general RMOCE. Since our numerical scheme for computing the RMOCE
	is based on the samples of $\xi$, we study statistical robustness
	of the sample-based RMOCE to address the case that
	the sample data of $\xi$ are potentially contaminated.
	Finally we carry out some numerical tests on the proposed computational schemes for concave utility functions.
	
The rest of the paper are organized as follows.
\Cref{sec:Properties} discusses 
the basic properties of MOCE and RMOCE. \Cref{sec:numer-methods} presents numerical schemes for computing the RMOCE when the utility functions in the ambiguity set are piecewise linear.
\Cref{sec:error} 
details approximation of the ambiguity set of general 
utility functions by the ambiguity set of piecewise linear utility functions and its effect on RMOCE.
\Cref{sec:unbounded}  discusses the RMOCE model with utility function having unbounded domain and  
streamlines the potential extensions of the MOCE model
 to multi-attribute decision making.
\Cref{sec:quantitative} discusses statistical robustness 
of RMOCE when it is calculated with
contaminated data. 
\Cref{sec:numerical} reports numerical results and 
finally 
\Cref{sec:conclu} concludes with a brief summary of the main contributions of the paper.

\section{
	Properties of MOCE and RMOCE}
\label{sec:Properties}
We begin by discussing the well-definedness of MOCE and RMOCE.
Let $L_p(\Omega,\F,\mathbb{P})$ denote the space of random variables mapping from $(\Omega,\F,\mathbb{P})$ to $\R$ with finite $p$-th order moments and $\xi\in L_p(\Omega,\F,\mathbb{P})$.  Let $\mathscr{U}:\R\to \R$ be
the set of nondecreasing concave utility functions.
Throughout this paper, we make a blanket assumption to ensure the
well-definedness of the expected utility in the definitions of MOCE and RMOCE.
\begin{assumption}
	\label{A:growth-phi-x-CE}
	There exist gauge functions $\phi_1:\R\to\R$ and
	$\phi_2:\R\to\R$ parameterized by $x$ satisfying 
	$\bbe_P[\phi_i(\xi)]< \infty$ for $i=1,2$ 
	such that 
	$$
	|u(\xi-x)|\leq \phi_1(\xi)  
	\quad\text{and}\quad
	\sup_{u\in {\mathscr U}} |u(\xi-x)|\leq \phi_2(\xi), \forall x, \xi\in\R.
	$$
\end{assumption}
The condition stipulates the interaction between the tails
distribution of $\xi$ and tails of the utility function.
We refer readers to Guo and Xu \cite{GuX21} for more detailed discussions on this. To facilitate the forthcoming
discussions, we let $\mathscr{P}(\Xi)$ denote the set of probability measures on $\Xi\subset\R$, and for each fixed $x$, define
$$
{\cal M}^{\phi_i} := \{
P\in \mathscr{P}(\Xi): \bbe_P[\phi_i(\xi)]<\infty
\}
$$
for $i=1,2$.
Let ${\cal C}_\Xi^{\phi_i}$ denote the class of continuous functions $h:\Xi\to\R$ such that $|h(t)|\leq C(\phi_i(t)+1)$ for all $t\in\Xi$.
The $\phi_i$-topology, denoted by $\tau_{\phi_i}$, is the coarsest topology  on ${\cal M}^{\phi_i}$ for which
the mapping
$g_h:=\int_{\Xi} h(z) P(dz),\; h\in {\cal C}_\R^{\phi_i}
$
is continuous.
A sequence  $\{P_N\} \subset {\cal M}^{\phi_i}$
is said to converge $\phi_i$-weakly to $P\in {\cal M}^{\phi_i}$ written
${P_N} \xrightarrow[]{\phi_i} P$ if it converges
w.r.t.~$\tau_{\phi_i}$. Note that in the case that when the support set of $\xi$ is a  compact set in $\R$, then the $\phi_i$-topology reduces to ordinary topology of weak convergence.

	

Our first technical result is on the attainability of
the optimum in the definition of MOCE.

\begin{prop}
	\label{Prop:Attain-max-RMOCE}
	Assume: (a)
	\Cref{A:growth-phi-x-CE}
	holds,
	(b) there exists $\alpha$ such that
	$\{x\in\R: u(x)+\bbe_P[u(\xi-x)]\geq \alpha\}$ is a compact set,
	(c) the support set of $\xi$, denoted by $\Xi=[\xi_{\min},\xi_{\max}]$, is bounded,
	(d) $u$ is strictly concave over $\Xi$.
	Then for $P\in {\cal M}^{\phi_1}$,
	\begin{equation}
	\label{eq:M_u-opti}
	M_u(\xi)=\sup_{x\in[\xi_{\min}/2,\;
		\xi_{\max}/2]}\{u(x)+\bbe_P[u(\xi-x)]\}.
	\end{equation}
	Moreover, if $\{P_N\}\subset \mathscr{P}(\Xi)$
	and $P_N$ converges weakly
	to $\delta_{\hat{\xi}}$, the Dirac probability measure at $\hat{\xi}$, then
	$M_u(\xi_N)$ converges to $2u(\hat{\xi}/2)$.
\end{prop}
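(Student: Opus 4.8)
The plan is to prove the two assertions in turn, using the interval reduction \eqref{eq:M_u-opti} as the scaffolding for the convergence claim. Set $f(x):=u(x)+\bbe_P[u(\xi-x)]$ and note that $f$ is concave on $\R$, being the sum of the concave map $x\mapsto u(x)$ and the expectation of the concave maps $x\mapsto u(\xi-x)$. The key to \eqref{eq:M_u-opti} is that the global supremum is unchanged by restricting $x$ to $[\xi_{\min}/2,\xi_{\max}/2]$, which I would establish by a slope-monotonicity argument that avoids assuming $u$ is differentiable. For $x_1<x_2\le \xi_{\min}/2$ with $h:=x_2-x_1>0$, we have $\xi-x_2\ge \xi_{\min}/2\ge x_1$ almost surely, so the increment of the concave function $u$ over the right-shifted interval $[\xi-x_2,\xi-x_1]$ is no larger than over $[x_1,x_2]$, i.e.\ $u(x_2)-u(x_1)\ge u(\xi-x_1)-u(\xi-x_2)$ pointwise; taking expectations gives $f(x_2)\ge f(x_1)$, so $f$ is nondecreasing on $(-\infty,\xi_{\min}/2]$. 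A symmetric comparison on $[\xi_{\max}/2,\infty)$ shows $f$ is nonincreasing there. Hence the global supremum equals the supremum over the compact interval; together with continuity of $f$ and the compact-superlevel-set condition (b) the supremum is attained, and strict concavity (d) makes the maximizer unique.

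For the convergence claim I would first invoke \eqref{eq:M_u-opti}, which applies \emph{uniformly} to every $P_N$ since $\{P_N\}\subset\mathscr P(\Xi)$ with $\Xi=[\xi_{\min},\xi_{\max}]$ fixed. Thus $M_u(\xi_N)=\sup_{x\in[\xi_{\min}/2,\xi_{\max}/2]} f_N(x)$ with $f_N(x):=u(x)+\bbe_{P_N}[u(\xi-x)]$, and the natural candidate limit is $\sup_x f_\infty(x)$, where $f_\infty(x):=u(x)+u(\hat\xi-x)$ is the objective under $\delta_{\hat\xi}$. I would then identify $\sup_x f_\infty(x)=2u(\hat\xi/2)$: the map $g(x):=u(x)+u(\hat\xi-x)$ is concave and symmetric about $\hat\xi/2$ (indeed $g(\hat\xi-x)=g(x)$), so $g(\hat\xi/2)\ge \tfrac12\bigl(g(x)+g(\hat\xi-x)\bigr)=g(x)$ for every $x$, whence the maximal value is $g(\hat\xi/2)=2u(\hat\xi/2)$; note $\hat\xi/2$ lies in the admissible interval because $\hat\xi\in\Xi$.

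The crux is to upgrade the weak convergence $P_N\xrightarrow{w}\delta_{\hat\xi}$ to convergence of the optimal values, and here the pointwise-in-$x$ convergence $\bbe_{P_N}[u(\xi-x)]\to u(\hat\xi-x)$ is not enough on its own; I need uniformity in $x$ so that the suprema converge, and I expect this to be the main obstacle. Because the limit is a Dirac mass, I can give a clean direct estimate using uniform continuity of $u$. Let $K:=[\xi_{\min}-\xi_{\max}/2,\ \xi_{\max}-\xi_{\min}/2]$ be the compact range of $\xi-x$ over $\Xi\times[\xi_{\min}/2,\xi_{\max}/2]$, let $\omega_u$ denote the modulus of continuity of $u$ on $K$, and put $M:=\sup_K|u|<\infty$. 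Since the increment $(\xi-x)-(\hat\xi-x)=\xi-\hat\xi$ is independent of $x$, splitting on $\{|\xi-\hat\xi|\le\delta\}$ and its complement yields, for every $\delta>0$,
\[
\sup_{x}\bigl|\bbe_{P_N}[u(\xi-x)]-u(\hat\xi-x)\bigr|\ \le\ \omega_u(\delta)+2M\,P_N\!\left(|\xi-\hat\xi|>\delta\right).
\]
By the Portmanteau theorem $P_N(|\xi-\hat\xi|>\delta)\to 0$, so $\limsup_N$ of the left-hand side is at most $\omega_u(\delta)$; letting $\delta\downarrow 0$ and invoking uniform continuity of $u$ on $K$ gives $\sup_x|f_N(x)-f_\infty(x)|\to 0$.

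Finally, uniform convergence of the objectives on the compact interval transfers to their suprema via $|\sup_x f_N(x)-\sup_x f_\infty(x)|\le \sup_x|f_N(x)-f_\infty(x)|$, so $M_u(\xi_N)\to \sup_x f_\infty(x)=2u(\hat\xi/2)$, as claimed. The assumptions doing the real work are the boundedness of the support (c), which makes $K$ compact and $u$ uniformly continuous there, and concavity of $u$, which drives the interval reduction; conditions (b) and (d) are needed only to guarantee that the maxima are attained and unique, and \Cref{A:growth-phi-x-CE} is automatic here since all $P_N$ are supported on the compact set $\Xi$.
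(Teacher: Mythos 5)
Your proof is correct, but it takes a genuinely different route from the paper's on both halves of the statement. For the interval reduction \eqref{eq:M_u-opti}, the paper argues through first-order optimality: it uses condition (b) to obtain a maximizer $x^*$, writes $0\in\partial u(x^*)-\bbe_P[\partial u(\xi-x^*)]$ with the Aumann expectation of subdifferentials, and then invokes the strict-concavity inequality $u_-'(t')>u_+'(t'')$ for $t'<t''$ to force $x^*\in[\xi_{\min}/2,\xi_{\max}/2]$. Your increment-comparison argument ($f$ nondecreasing to the left of $\xi_{\min}/2$, nonincreasing to the right of $\xi_{\max}/2$) is more elementary --- no subdifferential calculus and no attainment needed --- and it yields the equality of suprema under mere concavity; this is consistent with \Cref{ex:PWL-utility}, where the two suprema in \eqref{eq:M_u-opti} coincide even though the maximizer set spills outside the half-interval, so hypothesis (d) is really about locating maximizers (which the paper's route needs and yours does not). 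One caveat: your side remark that (d) makes the maximizer unique is not fully justified, since for $x$ in the half-interval both $x$ and $\xi-x$ can lie outside $\Xi$, where only concavity is assumed; but uniqueness is not part of the statement, so nothing is lost. For the convergence claim, the paper's proof is very terse: it asserts that the intervals $[\xi_{\min}^N/2,\xi_{\max}^N/2]$ shrink to the point $\hat\xi/2$, which implicitly requires the supports of $P_N$ to collapse to $\{\hat\xi\}$ --- something weak convergence to a Dirac measure does not guarantee. Your argument avoids this issue entirely: you keep the single fixed interval $[\xi_{\min}/2,\xi_{\max}/2]$ valid for every $P_N\in\mathscr{P}(\Xi)$, prove uniform convergence $\sup_x|f_N(x)-f_\infty(x)|\to 0$ via uniform continuity of $u$ on the compact set $K$ together with Portmanteau (applied to the closed set $\{|\xi-\hat\xi|\ge\delta\}$, which is the form that gives the needed upper bound), identify $\sup_x f_\infty(x)=2u(\hat\xi/2)$ by symmetry and concavity, and pass to suprema. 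In short, your route proves the value identity under weaker hypotheses and gives a complete, and in fact tighter, limit argument; what the paper's subdifferential approach buys instead is information on where maximizers sit, which it reuses elsewhere (e.g., in the piecewise-linear analogue, \Cref{prop-piecewiselinear-optimality}).
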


\begin{proof}
	Since $u$ is a strictly concave function,
	(\ref{eq:OCE-new}) is a convex optimization problem.
	Condition (b) ensures existence of an optimal solution, denoted by $x^*$. Following a similar analysis to the proof of \cite[Lemma 2.1]{BTT07}, we can write down the first order optimality condition of the program at $x^*$,
	\begin{equation}
	0\in\partial u(x^*)+\partial\bbe_P[u(\xi-x^*)],
	\label{eq:first-order-opti}
	\end{equation}
	where $\partial u$ denotes convex subdifferential \cite{Roc70}.
	Since  $\partial u(x)=[u_+'(x),u_-'(x)]$ for any $x\in \R$, where
	$u'_-, u'_+$ denote the left derivative and right derivative of $u$ at $x$ and
	$$
	\partial\bbe_P[u(\xi-x^*)] = - \bbe_P[\partial u(\xi-x^*)],
	$$
	where the expectation/integration at the right hand side is in the sense of Aumann \cite{Aum65}. Consequently we can rewrite
	(\ref{eq:first-order-opti}) as
	\begin{eqnarray}
	0&\in &[u_+'(x^*),u_-'(x^*)]-\bbe_P\left[[u_+'(\xi-x^*),u_-'(\xi-x^*)]\right]\nonumber\\
	&=&[u_+'(x^*),u_-'(x^*)]-\left[\bbe_P[u_+'(\xi-x^*)],\bbe_P[u_-'(\xi-x^*)]\right],
	\label{eq:first-order-opti-1}
	\end{eqnarray}
	which yields
	$$
	u'_+(x^*)-\bbe_P[u'_-(\xi-x^*)]\leq 0\leq u'_-(x^*)-\bbe_P[ u'_+(\xi-x^*)].
	$$
	Since $u'_-$ and $u'_+$
	are non-increasing,
	the inequality above implies
	\begin{equation}
	\label{eq:first-order-opti-1-a}
	u'_+(x^*)\leq \bbe_P[u'_-(\xi-x^*)]\leq u'_-(\xi_{\min}-x^*)
	\end{equation}
	and
	\begin{equation}
	\label{eq:first-order-opti-1-b}
	u'_-(x^*)\geq\bbe_P[u'_+(\xi-x^*)]\geq u'_+(\xi_{\max}-x^*).
	\end{equation}
	Moreover, since  $u_-'(t')> u_+'(t'')$ for any $t'<t''$, then
	inequalities (\ref{eq:first-order-opti-1-a})-(\ref{eq:first-order-opti-1-b}) imply
	$$
	x^*\geq \xi_{\min}-x^* \quad {\rm and} \quad  x^*\leq \xi_{\max}-x^*,
	$$
	and hence (\ref{eq:M_u-opti}).
	
	The second part of the claim follows directly from the first part
	in that the interval $[\xi_{\min}^N/2, \\
	\xi_{\max}^N/2]$ converges to a single point $\hat{\xi}/2$ and
	$\tau_{\phi_1}$-convergence coincides with the weak convergence
	because of the restriction of the range of $\xi$ to a compact subset of $\R$.
	\hfill $\Box$
\end{proof}

Ben-Tal and Teboulle \cite{BTT07} derive
a similar result to the first part of the proposition
for the optimized certainty equivalent and demonstrate that
$S_u(\xi)\in [\xi_{\min},\xi_{\max}]$ under the conditions that
$u$ is concave  and $1\in \partial u(0)$ rather than strictly concave.
{\color{black} Strict concavity is needed
	to ensure the optimum in (\ref{eq:M_u-opti}) to be achieved
	in $[\xi_{\min}/2,\xi_{\max}/2]$. We can find a counter example
	otherwise, see \Cref{ex:PWL-utility}.

	Like the optimized certainty equivalent, the newly defined modified optimized
	certainty equivalent enjoys a number of properties as stated in the next proposition.

	\begin{prop}[Properties of MOCE]
		\label{prop:property-MOCE}
		Let $u:\R\rightarrow(-\infty,+\infty)$ be a closed proper function. Under \Cref{A:growth-phi-x-CE}, the following assertions hold.
		\begin{itemize}
			\item[(i)] $M_u$ is law invariant.
			
			\item[(ii)](Monotonicity)
			For any $\xi_1\leq \xi_2\in L_p(\Omega,\F,\mathbb{P})$,
			with respective distributions (push-forward probabilities)
			$P_1,P_2\in {\cal M}^{\phi_1}$,
			$M_u(\xi_1)\leq M_u(\xi_2)$.
			
			\item[(iii)](Risk aversion)
			If $u(t)\leq t$ for all $t\in\R$, then $M_u(\xi)\leq\bbe_P[\xi]$ for any random variable $\xi$.
			
			\item[(iv)](Second-order stochastic dominance)
			Let $\xi_1, \xi_2$ be random variables with compact support.
			Then for any concave utility function $u$,
			$$
			M_u(\xi_1)\geq M_u(\xi_2)\Longleftrightarrow C_u(\xi_1)\geq C_u(\xi_2),
			$$
			where $C_u(\xi):=u^{-1}(\bbe_P[u(\xi)])$ is the classical
			certainty equivalent.
			
			\item[(v)](Concavity and positive subhomogeneity)
			If $u$ is concave, then $M_u(\cdot)$ is also concave. Moreover, if $u(0)\geq0$, then
			\begin{equation}
			\label{eq:Mu-sub-hom}
			M_u(\delta \xi)\leq\delta M_u(\xi), \;\forall \delta\in [1, \infty)
			\quad {\text{and}} \quad
			M_u(\delta \xi)\geq\delta M_u(\xi), \;\forall \delta\in [0,1].
			\end{equation}

		\end{itemize}
		
	\end{prop}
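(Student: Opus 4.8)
My plan is to obtain (i)--(iii) directly from the definition and to spend the real effort on (v) and (iv). For (i), $u(x)$ is independent of $\xi$ and $\bbe_P[u(\xi-x)]$ depends on $\xi$ only through its law, so $M_u(\xi)$ is a functional of $P$ alone, which is law invariance. For (ii) I use monotonicity of the utility: if $\xi_1\leq\xi_2$ almost surely then $u(\xi_1-x)\leq u(\xi_2-x)$ pointwise for every fixed $x$, hence $u(x)+\bbe_P[u(\xi_1-x)]\leq u(x)+\bbe_P[u(\xi_2-x)]$, and taking the supremum over $x$ gives $M_u(\xi_1)\leq M_u(\xi_2)$. For (iii), $u(t)\leq t$ gives $u(x)+\bbe_P[u(\xi-x)]\leq x+(\bbe_P[\xi]-x)=\bbe_P[\xi]$ for every $x$, so $M_u(\xi)\leq\bbe_P[\xi]$.

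For (v) the crucial point is that the inner objective is jointly concave, not merely concave in each argument. For each realization, $(\xi,x)\mapsto u(x)+u(\xi-x)$ is the sum of the concave map $x\mapsto u(x)$ and the composition of the concave $u$ with the affine map $(\xi,x)\mapsto\xi-x$, hence jointly concave; integrating in $\xi$ preserves this. Since the partial supremum of a jointly concave function over one block of variables is concave, $M_u(\cdot)=\sup_x\{u(x)+\bbe_P[u(\cdot-x)]\}$ is concave in $\xi$. (This is also why a naive ``supremum of concave functions'' argument would not work.) For the subhomogeneity I would first note $M_u(0)=\sup_x\{u(x)+u(-x)\}=2u(0)\geq0$, the maximum being attained at $x=0$ by concavity and symmetry. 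Then for $\delta\geq1$, writing $\xi=\tfrac1\delta(\delta\xi)+(1-\tfrac1\delta)\,0$ and using concavity of $M_u$ with $M_u(0)\geq0$ yields $M_u(\xi)\geq\tfrac1\delta M_u(\delta\xi)$, i.e. $M_u(\delta\xi)\leq\delta M_u(\xi)$; for $\delta\in[0,1]$ the symmetric decomposition $\delta\xi=\delta\,\xi+(1-\delta)\,0$ gives $M_u(\delta\xi)\geq\delta M_u(\xi)$.

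For (iv) I read the equivalence in the sense indicated by its label: quantified over the class of concave nondecreasing utilities, the order induced by $M_u$ coincides with the order induced by $C_u$, which is precisely second-order stochastic dominance (because $u^{-1}$ is increasing, $C_u(\xi_1)\geq C_u(\xi_2)\Leftrightarrow\bbe_P[u(\xi_1)]\geq\bbe_P[u(\xi_2)]$, and asking this of every concave nondecreasing $u$ is the definition of $\xi_1\succeq_{\mathrm{SSD}}\xi_2$). One implication is immediate: if $\xi_1\succeq_{\mathrm{SSD}}\xi_2$ then, for a fixed such $u$ and each $x$, the map $t\mapsto u(t-x)$ is again concave nondecreasing, so $\bbe_P[u(\xi_1-x)]\geq\bbe_P[u(\xi_2-x)]$; adding $u(x)$ and taking the supremum gives $M_u(\xi_1)\geq M_u(\xi_2)$.

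The converse---recovering second-order dominance from the family of $M_u$-comparisons---is where I expect the real difficulty. The obstruction is structural: by \Cref{Prop:Attain-max-RMOCE} the optimal $x^*$ lies in $[\xi_{\min}/2,\xi_{\max}/2]$, so $M_u$ effectively probes $u$ at the folded points $\xi-x^*$ near $\xi/2$, and $M_u$ is not a monotone function of $\bbe_P[u(\xi)]$ alone. I would try to exploit the sandwich $u(0)+\bbe_P[u(\xi)]\leq M_u(\xi)\leq 2\,\bbe_P[u(\xi/2)]$ (lower bound from $x=0$, upper bound from $u(x)+u(\xi-x)\leq 2u(\xi/2)$), and, given a test utility $v$ witnessing a failure of dominance, build a concave nondecreasing $u$---for instance one agreeing with $t\mapsto v(2t)$ on the relevant range, or an extremal ramp $t\mapsto-(k-t)_+$---for which the sandwich is tight enough to convert $\bbe_P[v(\xi_1)]<\bbe_P[v(\xi_2)]$ into $M_u(\xi_1)<M_u(\xi_2)$. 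Controlling the spread-dependent slack in the upper bound is the delicate step; the compact-support hypothesis is what keeps these quantities finite and the construction admissible.
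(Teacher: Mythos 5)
Parts (i)--(iii) of your proposal are fine, and part (v) is correct: your concavity argument (joint concavity of $(\xi,x)\mapsto u(x)+u(\xi-x)$ plus the fact that a partial supremum of a jointly concave function is concave) is exactly the paper's, while your subhomogeneity argument is a genuinely different route. You derive (\ref{eq:Mu-sub-hom}) from concavity of $M_u$ together with $M_u(0)=2u(0)\geq 0$, splitting $\xi$ (resp.\ $\delta\xi$) as a convex combination of $\delta\xi$ (resp.\ $\xi$) with the zero random variable; the paper instead shows directly that $\delta\mapsto\frac{1}{\delta}M_u(\delta\xi)$ is non-increasing, using the concavity inequality $\frac{1}{\delta_2}u(\delta_2 t)\leq\frac{1}{\delta_1}u(\delta_1 t)$ for $\delta_2>\delta_1>0$ when $u(0)\geq 0$. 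Both are valid, and yours is arguably cleaner since it reuses the concavity just established. Your ``$\Longleftarrow$'' direction of (iv) also coincides with the paper's (second-order dominance is preserved under the shifts $t\mapsto u(t-x)$, then add $u(x)$ and take suprema).

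The genuine gap is the ``$\Longrightarrow$'' direction of (iv), which you leave as an unfinished plan. The sandwich-plus-witness construction you sketch is not needed, and the step you yourself flag as delicate (controlling the slack in $M_u(\xi)\leq 2\,\bbe_P[u(\xi/2)]$) is exactly where your approach stalls. The paper's argument is short and uses only optimality of the maximizers: let $x_1$ and $x_2$ attain the suprema defining $M_u(\xi_1)$ and $M_u(\xi_2)$ (this is where compact support matters, not for ``keeping quantities finite''). Since $x_2$ is optimal for $\xi_2$, evaluating the $\xi_2$-objective at the suboptimal point $x_1$ gives
$$
M_u(\xi_1)=u(x_1)+\bbe_P[u(\xi_1-x_1)]\;\geq\; M_u(\xi_2)=u(x_2)+\bbe_P[u(\xi_2-x_2)]\;\geq\; u(x_1)+\bbe_P[u(\xi_2-x_1)],
$$
and cancelling $u(x_1)$ yields $\bbe_P[u(\xi_1-x_1)]\geq\bbe_P[u(\xi_2-x_1)]$. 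This reduces the maximin comparison to a comparison of expected utilities of the shifted utility $u(\cdot-x_1)$, which is again concave and nondecreasing; since the hypothesis is quantified over the whole class of concave utilities, one obtains $\bbe_P[u(\xi_1)]\geq\bbe_P[u(\xi_2)]$ for that class, i.e.\ second-order dominance, which is equivalent to $C_u(\xi_1)\geq C_u(\xi_2)$ because $u^{-1}$ is increasing. The single idea you are missing is this suboptimality inequality $M_u(\xi_2)\geq u(x_1)+\bbe_P[u(\xi_2-x_1)]$: it collapses the problem to a pointwise comparison at $x_1$ and makes the extremal-utility construction you envisage unnecessary.
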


	\begin{proof} 
		Parts (i)-(iii) follow straightforwardly from the definitions, we prove the rest.
		
		Part (iv).
		``$\Longleftarrow$''.
		By the definition of certainty equivalent, $C_u(\xi_1)\geq C_u(\xi_2)$ implies
		$\bbe_P[u(\xi_1)]\geq\bbe_P[u(\xi_2)]$ for all concave utility functions.
		The latter implies  $\xi_1$ dominates $\xi_2$ in second order, which in turn
		guarantees
		$\xi_1-x$ dominates $\xi_2-x$ in second order for any fixed $x\in\R$.
		Consequently
		$\bbe_P[u(\xi_1-x)]\geq\bbe_P[u(\xi_2-x)]$ for any $x\in\R$.
		Adding both sides of the inequality by $u(x)$ and taking the maximum, we obtain  $M_u(\xi_1)\geq M_u(\xi_2)$.
		
		``$\Longrightarrow$''.
		Let $x_1, x_2$ be the points where the supremum of $M_u(\xi_1)$ and $M_u(\xi_2)$ are
		attained. Then
		\begin{eqnarray*}
		M_u(\xi_1) = u(x_1)+\bbe_P[u(\xi_1-x_1)] &\geq& M_u(\xi_2) =u(x_2)+\bbe_P[u(\xi_2-x_2)] \\
		&\geq& u(x_1)+\bbe_P[u(\xi_2-x_1)],
		\end{eqnarray*}
		which yields
		$\bbe_P[u(\xi_1-x_1)]\geq\bbe_P[u(\xi_2-x_1)]$.
		The latter implies $\bbe_P[u(\xi_1)]\geq\bbe_P[u(\xi_2)]$.

		Part (v).
		First we prove the concavity of $M_u$, i.e. for $\lambda\in(0,1)$ and any random variables $\xi_1$, $\xi_2$,
		$$
		M_u(\lambda\xi_1+(1-\lambda)\xi_2))\geq\lambda M_u(\xi_1)+(1-\lambda)M_u(\xi_2).
		$$
		Since $u$ is concave, the function $f(z,x):=u(x)+u(z-x)$ is joint concave over $\R\times\R$. Therefore, for any $x_1, x_2\in\R$, with $x_{\lambda}:=\lambda x_1+(1-\lambda) x_2$ and $\xi_{\lambda}:=\lambda \xi_1+(1-\lambda) \xi_2$, one has
		$$
		\bbe[f(\xi_{\lambda},x_{\lambda})]\geq\lambda\bbe[f(\xi_1,x_1)]+(1-\lambda)\bbe_P[f(\xi_2,x_2)].
		$$
		Since $M_u(\xi_{\lambda})=M_u(\lambda\xi_1+(1-\lambda)\xi_2))=\sup_{x\in\R}\bbe_P[f(\xi_{\lambda},x)]$, it follows that
		\begin{eqnarray}
		M_u(\xi_{\lambda}) &\geq& \sup_{x_1,x_2}\left\{\lambda\bbe_P[f(\xi_1,x_1)]+(1-\lambda)\bbe_P[f(\xi_2,x_2)]\right\} =
		\lambda M_u(\xi_1)+(1-\lambda)M_u(\xi_2). \nonumber
		\end{eqnarray}
		Next, we turn to prove the subhomogeneity of $M_u$.
		Let $s(\delta):=\frac{1}{\delta}M_u(\delta\xi)$, for $\delta>0$. Then
		\begin{equation}
		s(\delta)=\sup_{x\in\R}\left\{\frac{1}{\delta}u(\delta x)+\bbe_P\left[\frac{1}{\delta}u(\delta(\xi-
x))\right]\right\}.
		\label{eq-subhomogeneous}
		\end{equation}
		Let $\delta_2>\delta_1>0$.
		By the concavity of $u$,
		$$\frac{u(\delta_2 t)-u(0)
		}{\delta_2-
			0}\leq\frac{u(\delta_1 t)-u(0)}{\delta_1-0}.$$
		Since $u(0)\geq0$, the above inequality implies
		\begin{equation}
		\frac{1}{\delta_2}u(\delta_2 t)\leq\frac{1}{\delta_1}u(\delta_1 t), \; \forall t\in\R.
		\label{eq-subhomogeneous_monotonicity}
		\end{equation}
		Inequality  (\ref{eq-subhomogeneous_monotonicity}) also implies
		\begin{equation*}
				\bbe_P\left[\frac{1}{\delta_2}u(\delta_2(\xi-x))\right]
		\leq \bbe_P\left[\frac{1}{\delta_1}u(\delta_1(\xi-x))\right].
		\end{equation*}
		A combination of the two inequalities implies the objective function in  (\ref{eq-subhomogeneous}) is non-increasing in $\delta$ and hence $s(\delta)$.
		By setting $\delta_1$ and $\delta_2$ to $1$ respectively
		in the inequality above, we obtain (\ref{eq:Mu-sub-hom}).
		\hfill $\Box$
	\end{proof}

	Next, we discuss
	how the utility function $u$
	may be recovered
	from a given
	modified certainty equivalent $M_u(\xi)$, which is an important
	property enjoyed by the OCE.
	Let
	$$\xi_p=\left\{
	\begin{array}{ll}
		z & \text{with probability} \; p,\\
		0 & \text{with probability} \; 1-p,
	\end{array}
	\right.$$
	where $0<p<1$ and $z>0$.
	For a
	concave utility function $u$,
	the modified optimized certainty equivalent $M_u(\xi_p)$ can be
	written as
	\begin{equation}
	M_u[z,p]:=\sup_{0\leq x\leq z/2}\{u(x)+pu(z-x)+(1-p)u(-x)\}.
	\label{eq:Muxp}
	\end{equation}
	

	\begin{prop}
	\label{prop-restore}
		If $u$ is a strong risk averse utility function, i.e., $u(t)<t$ for all $t\neq0$, and $u(0)=0$, then
		$\lim_{p\rightarrow0^+}\frac{M_u[z,p]}{p}=u(z).$
	\end{prop}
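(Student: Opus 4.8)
The plan is to establish the limit by a two-sided estimate, exploiting the decomposition
\[
u(x)+pu(z-x)+(1-p)u(-x)=\varphi(x)+p\,\psi(x),
\]
where $\varphi(x):=u(x)+u(-x)$ and $\psi(x):=u(z-x)-u(-x)$. The lower bound is immediate: evaluating the objective in (\ref{eq:Muxp}) at $x=0$ and using $u(0)=0$ gives $M_u[z,p]\ge pu(z)$, hence $M_u[z,p]/p\ge u(z)$ for every $p\in(0,1)$, so $\liminf_{p\to0^+}M_u[z,p]/p\ge u(z)$. The entire content of the proposition is thus the matching upper bound.

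For the upper bound I would first record that strong risk aversion forces $\varphi(x)<0$ for $x\ne0$ while $\varphi(0)=0$; indeed $u(x)<x$ and $u(-x)<-x$ for $x\ne0$ give $\varphi(x)<0$. Since $[0,z/2]$ is compact and $u$ (being finite and concave) is continuous, the supremum defining $M_u[z,p]$ is attained at some $x_p\in[0,z/2]$. Because $\varphi(x_p)\le0$,
\[
\frac{M_u[z,p]}{p}=\frac{\varphi(x_p)}{p}+\psi(x_p)\le \psi(x_p),
\]
so it suffices to show $\psi(x_p)\to\psi(0)=u(z)$, which by continuity of $\psi$ reduces to showing that any selection of maximizers satisfies $x_p\to0$ as $p\to0^+$.

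The crux, and the step I expect to be the main obstacle, is therefore the claim $x_p\to0$. I would argue by contradiction: if it fails, there are $\varepsilon>0$ and a sequence $p_n\to0$ with $x_{p_n}\ge\varepsilon$, and by compactness a subsequence along which $x_{p_n}\to\bar x\ge\varepsilon>0$. Along that subsequence, $\psi$ being bounded on $[0,z/2]$ and $p_n\to0$ give
\[
M_u[z,p_n]=\varphi(x_{p_n})+p_n\psi(x_{p_n})\longrightarrow\varphi(\bar x).
\]
On the other hand the lower bound gives $M_u[z,p_n]\ge p_nu(z)\to0$, so passing to the limit yields $\varphi(\bar x)\ge0$, contradicting $\varphi(\bar x)<0$. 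Hence $x_p\to0$.

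Combining the two bounds gives $\limsup_{p\to0^+}M_u[z,p]/p\le u(z)\le\liminf_{p\to0^+}M_u[z,p]/p$, i.e. the limit equals $u(z)$. The only delicate points are the continuity and attainment issues, handled by concavity of $u$ and compactness of $[0,z/2]$, and making sure the contradiction argument uses only $p_n\to0$ and not any sign information on $u(z)$; both become routine once the decomposition $\varphi+p\,\psi$ is in place.
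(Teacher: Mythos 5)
Your proof is correct, and it takes a genuinely different --- and in fact sounder --- route than the paper's. The paper argues that $x^*=0$ is \emph{exactly} optimal in (\ref{eq:Muxp}) for all sufficiently small $p$: it rewrites optimality of $x=0$ as inequality (\ref{eq:Muxp-2}), observes that the right-hand side $-u(-x)-u(x)$ is positive for $x\neq 0$, and concludes that (\ref{eq:Muxp-2}) holds for all small $p$, so that $M_u[z,p]=pu(z)$ identically near $p=0$. That step has a uniformity gap: both sides of (\ref{eq:Muxp-2}) vanish as $x\to 0^+$, and their rates matter. If $u$ is differentiable at $0$, the right-hand side is $o(x)$ while the bracket on the left behaves like $\bigl(u'(0)-u'_-(z)\bigr)x$, so for any fixed $p>0$ the inequality fails for $x$ close to $0$. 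The paper's own exponential example exposes this: for $u(t)=1-e^{-t}$ the maximizer is $x^*=-\frac{1}{2}\ln\bigl(pe^{-z}+1-p\bigr)>0$ for every $p\in(0,1)$, and $M_u[z,p]=2-2\bigl(pe^{-z}+1-p\bigr)^{1/2}$, which differs from $pu(z)$ at second order in $p$; the limit in the proposition still holds, but the intermediate identity $M_u[z,p]=pu(z)$ does not. Your argument never claims $x_p=0$: it needs only $\varphi(x_p)\le 0$ (which is exactly strong risk aversion) to get the upper bound $M_u[z,p]/p\le\psi(x_p)$, the compactness/continuity argument to get $x_p\to 0$ and hence $\psi(x_p)\to u(z)$, and the trivial evaluation at $x=0$ for the matching lower bound. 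This sandwich recovers the limit while avoiding the false intermediate claim, so your proof repairs the paper's argument rather than merely paralleling it. The only points to make explicit in a final write-up are that $\varphi$ and $\psi$ are continuous because finite concave functions on $\R$ are continuous, and that the contradiction step uses only boundedness of $\psi$ on $[0,z/2]$ and $p_n\to 0$ --- both of which you already note.
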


	\begin{proof}
		Observe that $x^*=0$ is the optimal solution of problem (\ref{eq:Muxp})
		if and only if
		{\color{black}
			\begin{eqnarray}
			u(x)+pu(z-x)+(1-p)u(-x) &\leq&
			u(0)+pu(z-0)+(1-p)u(-0)\nonumber\\
			&=&pu(z),   \forall x\in [0,z/2].
			\label{eq:Muxp-1}
			\end{eqnarray}
			The inequality above
			can be equivalently written as
			\begin{equation}
			p[u(z-x)-u(-x)-u(z)]\leq -u(-x)-u(x), \forall x\in [0,z/2].
			\label{eq:Muxp-2}
			\end{equation}
		}
		Since $u$ is strongly risk averse, that is, $u(t)<t$ for all $t\neq0$, then $-u(-x)-u(x)>0$ and hence inequality (\ref{eq:Muxp-2}) holds for $p$ sufficiently small. This in turn
		shows that inequality (\ref{eq:Muxp-2}) holds and hence
		$x^*=0$ is the optimal solution of problem (\ref{eq:Muxp})
		for all $p$ sufficiently small. Thus we have
		$M_u[z,p]=pu(z)$
		and the conclusion follows.
		\hfill $\Box$
	\end{proof}
}

\begin{ex}
	We give a few examples which illustrate how MOCE can be calculated in a closed form and their difference in comparison with OCE.
	Let $M_u(\xi)=2(1-\left(\bbe_P[e^{-\xi}]\right)^{1/2})$. Then
	$M_u[z,p]=2-2(pe^{-z}+(1-p))^{1/2}$
	and
	\begin{eqnarray*}
		u(z) = \lim_{p\rightarrow0^+}\frac{M_u[z,p]}{p}= \lim_{p\rightarrow0^+}\frac{2-2(pe^{-z}+1-p)^{1/2}}{p}= \lim_{p\rightarrow0^+}\frac{1-e^{-z}}{(pe^{-z}+1-p)^{1/2}}.
	\end{eqnarray*}
	Hence, the recovered utility function is $u(z)=1-e^{-z}$.
\end{ex}

\begin{ex}[Exponential Utility Function]
	Let $u(t)=1-e^{-t}$, $t\in\R$.
	It is easy to derive that
	the
	optimal solution of problem (\ref{eq:OCE-new}) is
	$x^*=-\frac{1}{2}\ln \bbe_P\left[e^{-\xi}\right]$
	and the 
	modified optimized certainty equivalent is
	$
	M_u(\xi)=2\left(1-\left(\bbe_P[e^{-\xi}]\right)^{1/2}\right).
	$
	On the other hand, it follows from \cite{BTT07} that
	$
	S_u(\xi)=-\ln\bbe_P[e^{-\xi}].
	$
	Since $u(t) \leq t$ for all $t\in \R$, then we can deduce
	from the definitions that $M_u(\xi)\leq S_u(\xi)$. Indeed the strict inequality holds in that $u(t)=t$ only at $t=0$.
\end{ex}



\begin{ex}[Piecewise Linear Utility Function]
	\label{ex:PWL-utility}
	Let
	$$
	u(t)=\left\{\begin{array}{ll}
		\gamma_2t & \text{if} \;\; t\leq0, \\
		\gamma_1t & \text{if} \;\; t>0,
	\end{array}
	\right.$$
	where $0\leq\gamma_1<1\leq\gamma_2$. Then the utility function $u$ can be written as
	$u(t)=\gamma_1(t)_+-\gamma_2(-t)_+$
    and the modified optimized  certainty equivalent is
	\begin{equation}
	\label{eq:M_u-PWL}
	M_u(\xi)=\sup_{x\in\R}\{\gamma_1(x)_+-\gamma_2(-x)_+-\gamma_2\bbe_P[(x-\xi)_+]+\gamma_1\bbe_P[(\xi-x)_+]\}.
	\end{equation}
	Compared to optimized certainty equivalent (see \cite{BTT07})
	$$
	S_u(\xi)=\sup_{x\in\R}\{x-\gamma_2\bbe_P[(x-\xi)_+]+\gamma_1\bbe_P[(\xi-x)_+]\},
	$$
	we can also conclude that  $M_u(\xi)\leq S_u(\xi)$
	because  $u(t)\leq t$ for all $t\in\R$.
	
	{\color{black}
		It  might be interesting to see where the optimum
		in (\ref{eq:M_u-PWL}) is achieved. We consider the case that
		$P$ follows a Dirac distribution at point $t>0$, that is,
		$[\xi_{\min},\xi_{\max}]=\{t\}$.
		Consequently
		\begin{eqnarray}
		u(x)+\bbe_P[u(\xi-x)]=\left\{
		\begin{array}{ll}
			(\gamma_2-\gamma_1)x+t\gamma_1 & \; {\rm if} \; x\leq0, \\
			t\gamma_1 & \; {\rm if} \; 0<x\leq t, \\
			(\gamma_1-\gamma_2)x+t\gamma_2 & \; {\rm if} \; x\geq t.
		\end{array}
		\right.
		\end{eqnarray}
		The set of optimal solutions is $[0,t]$,
		which is not contained in $[0,t/2]\not\subset
		[\xi_{\min}/2,  \xi_{\max}/2]=\{t/2\}$.
		This explains that (\ref{eq:M_u-opti}) may fail to hold
		without strict concavity of $u$.}

\end{ex}

We now move on to discuss the properties of the robust modified optimized certainty equivalent.

\begin{prop}[Properties of RMOCE] 
\label{prop-RMOCE}
Let $u:\R\rightarrow[-\infty,+\infty)$ be a closed proper function. Under \Cref{A:growth-phi-x-CE}, the following assertions hold.
	\begin{itemize}
		\item[(i)] $R(\xi)$ is law invariant.
		
		\item[(ii)](Monotonicity)
		For any $\xi_1\leq \xi_2\in L_p(\Omega,\F,\mathbb{P})$,
		with respective distributions (push-forward probabilities)
		$P_1,P_2\in {\cal M}^{\phi_2}$,
		$R(\xi_1)\leq R(\xi_2)$.
		
		\item[(iii)](Risk aversion)
		If $u(t)\leq t$, for all $t\in\R$ and $u\in\mathscr U$, then $R(\xi)\leq\bbe_P[\xi]$, for any random variable $\xi$.
		
		\item[(iv)](Second-order stochastic dominance)
		Let $\xi_1, \xi_2$ be random variables with compact support. Then for any concave utility function $u$,
		$$
		C_u(\xi_1)\geq C_u(\xi_2)\Longrightarrow R(\xi_1)\geq R(\xi_2),
		$$
		where $C_u(\xi)$ is the classical certainty equivalent.
		
		\item[(v)](Concavity and positive subhomogeneity)
		If $u$ is concave, then $R(\cdot)$ is also concave. Moreover, if $u(0)\geq0$, then
		\begin{equation}
		R(\delta \xi)\leq\delta R(\xi), \;\forall \delta\in [1, \infty)
		\quad {\text{and}} \quad
		R(\delta \xi)\geq\delta R(\xi), \;\forall \delta\in [0,1].
		\end{equation}
		
	\end{itemize}
	
\end{prop}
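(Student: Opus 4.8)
The plan is to exploit the fact that $R(\xi)=\sup_{x\in\R}\inf_{u\in\mathcal{U}}g_u(x,\xi)$ is obtained from the MOCE objective $g_u(x,\xi):=u(x)+\bbe_P[u(\xi-x)]$ by inserting an inner infimum over the ambiguity set, and that the operations $\inf_{u\in\mathcal{U}}$ and $\sup_{x}$ each preserve the structural features already used in \Cref{prop:property-MOCE}. Part (i) is immediate since $g_u$ depends on $\xi$ only through its law $P$. For (ii) and (iii) I would argue pointwise in $u$: for $\xi_1\le\xi_2$ and each nondecreasing $u\in\mathcal{U}$, monotonicity gives $g_u(x,\xi_1)\le g_u(x,\xi_2)$ for every $x$; since this holds for all $u$, applying $\inf_{u}$ and then $\sup_x$ (both monotone operations) yields $R(\xi_1)\le R(\xi_2)$. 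Likewise, if $u(t)\le t$ for all $t$ and all $u\in\mathcal{U}$, then $g_u(x,\xi)\le x+\bbe_P[\xi-x]=\bbe_P[\xi]$ for every $u$ and $x$, so $\inf_u$ and $\sup_x$ leave the bound $\bbe_P[\xi]$ intact.

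For (iv) I would reuse the ``$\Longleftarrow$'' half of the MOCE argument. Starting from $C_u(\xi_1)\ge C_u(\xi_2)$ I would pass to the expected-utility inequality and hence to second order stochastic dominance of $\xi_1$ over $\xi_2$; since this dominance is invariant under the translation $\xi\mapsto\xi-x$, every concave $v\in\mathcal{U}$ satisfies $\bbe_P[v(\xi_1-x)]\ge\bbe_P[v(\xi_2-x)]$, whence $g_v(x,\xi_1)\ge g_v(x,\xi_2)$ for all $v\in\mathcal{U}$ and all $x$. Taking $\inf_{v\in\mathcal{U}}$ and then $\sup_x$ preserves the inequality and gives $R(\xi_1)\ge R(\xi_2)$. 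I would emphasise that only the one-sided implication survives here: the reverse direction of the MOCE equivalence relied on evaluating $g_u$ at the maximiser of a single fixed utility, and this is destroyed once the worst-case utilities attained in $R(\xi_1)$ and $R(\xi_2)$ are allowed to differ.

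For (v), concavity would follow from two observations: for each fixed concave $u$ the map $(x,\xi)\mapsto g_u(x,\xi)$ is jointly concave (exactly as in the MOCE proof, using joint concavity of $(z,x)\mapsto u(x)+u(z-x)$ together with linearity of the expectation), and the pointwise infimum of a family of jointly concave functions is again jointly concave; partial maximisation of a jointly concave function over $x$ then yields a concave function of $\xi$. For positive subhomogeneity I would set $s(\delta):=\frac{1}{\delta}R(\delta\xi)$ and, after the substitution $y=\delta x$, write $s(\delta)=\sup_x\inf_{u\in\mathcal{U}}\{\frac{1}{\delta}u(\delta x)+\bbe_P[\frac{1}{\delta}u(\delta(\xi-x))]\}$. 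The inequality $\frac{1}{\delta_2}u(\delta_2 t)\le\frac{1}{\delta_1}u(\delta_1 t)$ for $\delta_2>\delta_1>0$, valid for each concave $u$ with $u(0)\ge 0$, shows that the bracketed term is non-increasing in $\delta$ for every fixed $u$ and $x$; since $\inf_u$ and $\sup_x$ preserve this monotonicity, $s$ is non-increasing, and comparing $s(\delta)$ with $s(1)=R(\xi)$ delivers both scaling inequalities.

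The main obstacle is part (iv): the careful justification that $C_u(\xi_1)\ge C_u(\xi_2)$ propagates to second order stochastic dominance and the attendant translation invariance, together with a clean explanation of why the inner infimum collapses the MOCE equivalence to a one-way implication. Elsewhere the only point needing care is the interchange-free handling of $\inf_{u}$ and $\sup_x$: I would keep all inequalities pointwise in $(x,u)$ before applying these operators, which sidesteps any minimax or measurable-selection concern.
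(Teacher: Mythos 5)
Your proposal is correct and follows essentially the same route as the paper: parts (i)--(iii) pointwise in $(x,u)$, part (iv) by reusing the ``$\Longleftarrow$'' half of the MOCE argument and then applying $\inf_{u}$ and $\sup_x$, and part (v) via the non-increasing map $\delta\mapsto\frac{1}{\delta}u(\delta t)$ whose monotonicity survives the inner infimum and outer supremum. Your added details --- the explicit concavity argument via infima of jointly concave functions, and the explanation of why the MOCE equivalence in (iv) degrades to a one-way implication --- are sound elaborations of steps the paper leaves terse, not a different method.
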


\begin{proof} Parts (i)-(iii) are obvious.
	
	Part (iv). Following a similar argument to the proof of part (iv)
of 	\Cref{prop:property-MOCE}, we can show that
	$C_u(\xi_1)\geq C_u(\xi_2)$ implies
	$\bbe_P[u(\xi_1)]\geq\bbe_P[u(\xi_2)]$ and
	$\bbe_P[u(\xi_1-x)]\geq\bbe_P[u(\xi_2-x)]$  for any fixed
	$x\in\R$ and hence
	$$
	u(x)+\bbe_P[u(\xi_1-x)]\geq
	u(x) + \bbe_P[u(\xi_2-x)].
	$$
	Taking infimum on both sides w.r.t. $u$ over ${\cal U}$ and
	then supremum w.r.t. $x$, we obtain $R(\xi_1)\geq R(\xi_2)$.
	
	Part (v). Let
	$g_u(\delta,x) :=
	\frac{1}{\delta}u(\delta x)+\bbe_P\left[\frac{1}{\delta}u(\delta(\xi-x))
	\right].
	$
	We can show as in the proof of \Cref{prop:property-MOCE} (v) that $g_u(\cdot,x)$ is non-increasing
	over $\R$. This property is preserved after taking the infimum in $u$ over ${\mathcal U}$ and then supremum in $x$ over $\R$.
	\hfill $\Box$
\end{proof}

{\color{black}
Before concluding this section, we remark that 
it is possible to use 
a different utility function $v$ for 
the present consumption $x$, i.e.,
\begin{equation}
{\rm (RMOCE')}\quad \quad  \displaystyle{M_{u,v}(\xi):=\sup_{x\in\R} \; \{v(x)+\bbe_P[u(\xi-x)]\}}.
\label{eq:OCE-new-v}
\end{equation}
In that case, 
some of the 
properties of MOCE may be retained. 
For example, 
law invariance, monotonicity, risk aversion, concavity, positive subhomogeneity 
and 
second-order stochastic dominance
are all satisfied when $v$ enjoys the same property as $u$.
\Cref{prop-restore}
also holds when $v$ satisfies the same property as $u$.
However, the change will have an effect on \Cref{Prop:Attain-max-RMOCE},
in which case it will be difficult to estimate 
the interval containing the optimal solution.





}


\section{
	Computation of RMOCE}
\label{sec:numer-methods}

%

Having
investigated the properties of MOCE and RMOCE in the previous section, we move on to discuss numerical schemes for computing RMOCE in this section. To this end, we need to have a concrete structure of the ambiguity set. As reviewed in the introduction, various approaches have been proposed for constructing an ambiguity set of utility functions in the literature of preference robust optimization depending on the availability of information. Here we consider a situation where
the decision maker has a nominal utility function obtained from empirical data or subjective judgement but lacks of complete information to identify whether it is the true utility function which captures precisely the decision maker's preference.
Consequently we may construct a ball of utility functions centered at the nominal utility function under some appropriate metrics.
Here we concentrate on the Kantorovich metric.

\subsection{Kantorovich ball of piecewise linear utility functions}

We begin by considering a ball of utility function centered 
at a piecewise linear utility function under the the Kantorovich metric. 
In practice, decision maker's utility preferences
are often elicited through questionnaires. 
For example, a customer's utility preference 
may be elicited via the customer's willingness to pay at certain price points \cite{DeJ19,LCX21}. From computational point of view,
piecewise linear utility function may bring significant 
convenience to calculation of OCE, see
Nouiehed et al.~\cite{NPR19}.

Let $t_1<\cdots<t_N$ be an ordered sequence of points in $[a,b]$
and $T:=\{t_1,\cdots,t_{N}\}$ with $t_1=a$ and $t_N=b$.
Let $\mathscr{U}_N$
be a class of continuous, non-decreasing, concave, piecewise linear functions
defined over an interval $[a,b]$ with kinks on $T$, as well as Lipschitz condition with modulus $L$ and normalized conditions $u(a)=0$ and $u(b)=1$.
Let $u_N,u_N^0\in \mathscr{U}_N$, we consider a ball in $\mathscr{U}_N$
with the Kantorovich metric
\begin{equation}
\mathbb{B}_{K}(u_N^0,r) = \left\{u_N\in \mathscr{U}_N | \dd_K
(u_N,u_N^0)\leq r\right\},
\label{bl-ball-utl}
\end{equation}
where the subscript $K$ represents the Kantorovich metric and
\begin{equation}
\displaystyle{\dd_K (u,v):= \sup_{g\in\mathscr G_K}|\langle g,u\rangle-\langle g,v\rangle|=\sup_{g\in\mathscr G_K}\left\{\int_{\R}g(t)du(t)-\int_{\R}g(t)dv(t)\right\}}
\label{eq:Kant-metric}
\end{equation}
and
\begin{equation}
\label{eq-kantorovich}
\mathscr G_K:= \{g:\R\rightarrow\R \mid |g(t)-g(t')|\leq|t-t'|, \forall t,t'\in\R\}.
\end{equation}
{\color{black}
Note that piecewise linear 
utility functions are used to 
approximate general utility functions in the utility preference robust optimization model \cite{GuX21}.
The difference is that here we use
the Kantorovich ball to construct 
the ambiguity set of DM's utility function whereas the authors use pairwise comparison approach to elicit the DM's utility preferences in \cite{GuX21}.  
}
The next proposition states that $\dd_K(u_N,u_N^0)$ may be computed by solving a linear program.

\begin{prop} The Kantorvich distance
	$\dd_K(u_N,u_N^0)$ is the optimal value of the following linear program:
	\begin{subequations}
		\label{eq:Kant-u-v-LP}
		\begin{eqnarray}
			\hspace{-0.5cm}
			\displaystyle \max_{\substack{y_1,\cdots,y_{N-1} \\ z_1,\cdots,z_N}}
			&&\sum_{j=2}^N  (\beta_{j-1}-\beta_{j-1}^0)y_{j-1}\\
			{\rm s.t.}~~~
			&&  y_{j-1}\leq z_{j-1}(t_{j}- t_{j-1})+\frac{1}{2}(t_{j}- t_{j-1})^2,  j=2,\cdots,N, \\
			&&-y_{j-1}\leq -z_{j-1}(t_{j}- t_{j-1})+\frac{1}{2}(t_{j}- t_{j-1})^2, j=2,\cdots,N, \\
			&&  y_{j-1}\leq z_j(t_{j}- t_{j-1})+\frac{1}{2}(t_{j}- t_{j-1})^2,  j=2,\cdots,N, \\
			&& -y_{j-1}\leq -z_j(t_{j}- t_{j-1})+\frac{1}{2}(t_{j}- t_{j-1})^2, j=2,\cdots,N.
		\end{eqnarray}
	\end{subequations}
\end{prop}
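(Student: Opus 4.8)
The plan is to convert the definition \eqref{eq:Kant-metric} into the stated linear program in two moves: first rewrite $\dd_K$ as a linear functional of the interval integrals of the test function $g$, and then identify which vectors of integrals are admissible. Since $u_N,u_N^0\in\mathscr U_N$ are piecewise linear, the Stieltjes measures $du_N,du_N^0$ are absolutely continuous and constant on each subinterval, equal to the slopes $\beta_{j-1}$ of $u_N$ and $\beta_{j-1}^0$ of $u_N^0$ on $(t_{j-1},t_j)$. Hence for every $g\in\mathscr G_K$,
\[
\langle g,u_N\rangle-\langle g,u_N^0\rangle=\sum_{j=2}^N(\beta_{j-1}-\beta_{j-1}^0)\int_{t_{j-1}}^{t_j}g(t)\,dt .
\]
Because $\mathscr G_K=-\mathscr G_K$, the absolute value in \eqref{eq:Kant-metric} may be dropped, so with $y_{j-1}:=\int_{t_{j-1}}^{t_j}g(t)\,dt$ the distance becomes $\sup_{g\in\mathscr G_K}\sum_{j=2}^N(\beta_{j-1}-\beta_{j-1}^0)y_{j-1}$, whose objective already coincides with that of the program. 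It then remains to match the feasible regions.

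For the inequality $\dd_K(u_N,u_N^0)\le$ (optimal value) I would introduce the node values $z_j:=g(t_j)$ and set $h_{j-1}:=t_j-t_{j-1}$. Anchoring the integral at the left endpoint and using $|g(t)-g(t_{j-1})|\le t-t_{j-1}$ gives $|y_{j-1}-z_{j-1}h_{j-1}|\le\tfrac12 h_{j-1}^2$, and anchoring at the right endpoint gives $|y_{j-1}-z_j h_{j-1}|\le\tfrac12 h_{j-1}^2$; these are exactly the four inequality constraints of the program. Thus every $g\in\mathscr G_K$ produces a feasible $(y,z)$ with equal objective value, so the supremum defining $\dd_K$ is dominated by the optimal value of the linear program.

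The reverse inequality is where I expect the main obstacle. One must show that any feasible $(y,z)$ is realisable, i.e.\ that there exists $g\in\mathscr G_K$ with $g(t_j)=z_j$ and $\int_{t_{j-1}}^{t_j}g=y_{j-1}$ for all $j$; plugging such a $g$ back into the supremum then bounds the optimal value by $\dd_K$. The natural route is to build $g$ interval by interval as a $1$-Lipschitz interpolant from $z_{j-1}$ to $z_j$ with prescribed integral $y_{j-1}$, and then glue the pieces: continuity at interior nodes is automatic because neighbouring intervals share the same $z_j$, and a continuous function that is $1$-Lipschitz on each piece is $1$-Lipschitz globally. The delicate step, which carries the real content, is the single-interval solvability: given the two endpoints and a target integral satisfying the two pairs of inequalities, produce a $1$-Lipschitz interpolant attaining that integral. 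I would attack this by computing the exact range of integrals of $1$-Lipschitz interpolants between fixed endpoints, via the pointwise-largest and pointwise-smallest admissible ``tent'' functions, and then invoke continuity to hit every intermediate value. The crux is verifying that this attainable range coincides with the band carved out by the constraints; care is essential here, because the averaging bounds used in the easy direction are transparently necessary, whereas their sufficiency — the tightness of the relaxation — is not automatic and is precisely what the proof must secure.
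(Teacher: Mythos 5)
Your reduction of the objective to $\sum_{j=2}^N(\beta_{j-1}-\beta_{j-1}^0)y_{j-1}$, your necessity direction (every $g\in\mathscr G_K$ induces a feasible $(y,z)$ with equal objective value), and your gluing observation (matching node values plus piecewise $1$-Lipschitz implies globally $1$-Lipschitz) are all correct and agree with the paper's proof. But the step you defer is not merely delicate --- it fails. On an interval of length $h=t_j-t_{j-1}$ with prescribed endpoint values $z_{j-1},z_j$ and $\Delta:=z_j-z_{j-1}$ (the constraints force $|\Delta|\le h$), the pointwise largest and smallest $1$-Lipschitz interpolants are the two tents, and their integrals give the exact attainable range
\begin{equation*}
y_{j-1}\in\Bigl[\tfrac{1}{2}(z_{j-1}+z_j)h-\tfrac{1}{4}\bigl(h^2-\Delta^2\bigr),\;\tfrac{1}{2}(z_{j-1}+z_j)h+\tfrac{1}{4}\bigl(h^2-\Delta^2\bigr)\Bigr],
\end{equation*}
whereas the LP permits $y_{j-1}\in\bigl[\max\{z_{j-1},z_j\}h-\tfrac{1}{2}h^2,\;\min\{z_{j-1},z_j\}h+\tfrac{1}{2}h^2\bigr]$. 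The LP band is wider by $(h-|\Delta|)^2/4$ on each side, with coincidence only in the degenerate case $|\Delta|=h$. So feasible LP points are in general not realizable by any $g\in\mathscr G_K$, and your programme of hitting every value in the band cannot be completed.

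Moreover, the discrepancy is not harmless: it changes the optimal value, so the proposition is false as stated and no realizability lemma can repair it. Take $[a,b]=[0,3]$, $T=\{0,1,2,3\}$, and let $u_N,u_N^0\in\mathscr U_N$ have slopes $\beta=\tfrac{1}{60}(34,13,13)$ and $\beta^0=\tfrac{1}{60}(30,21,9)$; both are nonnegative, nonincreasing and normalized. Then $\beta-\beta^0=(\tfrac{1}{15},-\tfrac{2}{15},\tfrac{1}{15})$, and since $u_N-u_N^0$ vanishes at both endpoints, integration by parts gives
\begin{equation*}
\dd_K(u_N,u_N^0)=\sup_{|g'|\le 1}\;-\int_0^3 g'(s)\bigl(u_N(s)-u_N^0(s)\bigr)\,ds=\int_0^3\bigl|u_N(s)-u_N^0(s)\bigr|\,ds=\tfrac{1}{10}.
\end{equation*}
Yet $z=(0,0,0,0)$, $y=(\tfrac12,-\tfrac12,\tfrac12)$ is LP-feasible with objective $\tfrac{1}{15}\cdot\tfrac12+\tfrac{2}{15}\cdot\tfrac12+\tfrac{1}{15}\cdot\tfrac12=\tfrac{2}{15}>\tfrac{1}{10}$; realizing this value would require a $1$-Lipschitz $g$ with $\int_0^1 g-2\int_1^2 g+\int_2^3 g=2$, whereas the supremum of that functional over $\mathscr G_K$ is $\tfrac32$. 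Thus the LP is only a relaxation (an upper bound) of $\dd_K$. Note that this is exactly the point the paper's own proof glosses over: its ``Case 1'' dismisses the within-interval Lipschitz requirement ``because the objective depends only on $\int_{t_{j-1}}^{t_j}g(t)\,dt$'', which is precisely the non sequitur your proposal, to its credit, refuses to accept. Your instinct that tightness is ``what the proof must secure'' is correct --- and it is the point at which both your route and the paper's argument collapse.
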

\begin{proof}
	Let $g\in \mathscr{G}_K$. By definition,
	$$
	\int_a^b g(t)du_N(t) = \sum_{j=2}^N \beta_{j-1}\int_{t_{j-1}}^{t_j} g(t)dt,
	$$
	where $\beta_j$ denotes the slope of $u_N$ at interval
	$[t_{j-1}, t_j]$. Since for each $g\in  \mathscr{G}_K$,
	$-g\in  \mathscr{G}_K$,
	$$
	\dd_K(u_N,u_N^0) = \sup_{g\in \mathscr{G}_K}\sum_{j=2}^N (\beta_{j-1}-\beta_{j-1}^0)\int_{t_{j-1}}^{t_j} g(t)dt,
	$$
	where $\beta_{j-1}^0$ denotes the slope of $u^0$ at interval
	$[t_{j-1}, t_j]$. Note that in this formulation,
	$\dd_K(u_N,u^0_N)$ depends on the slopes of $u_N,u_N^0$ rather than their function values.
	Let $y_{j-1} :=\int_{t_{j-1}}^{t_j} g(t)dt$ and
	$z_j:=g(t_j)$. Since
	$|g(t)-g(t_{j-1})|\leq t-t_{j-1}$ for all $t\in [t_{j-1},t_j]$,
	we have
	$$
	z_{j-1}(t_{j}- t_{j-1})-\frac{1}{2}(t_{j}- t_{j-1})^2 \leq  y_{j-1}\leq z_{j-1}(t_{j}- t_{j-1})+\frac{1}{2}(t_{j}- t_{j-1})^2
	$$
	for $j=2,\cdots,N$. 
	Likewise,
	since
	$|g(t)-g(t_{j})|\leq t_{j}-t$ for all $t\in [t_{j-1},t_j]$,
	we have
	$$
	z_{j}(t_{j}- t_{j-1})-\frac{1}{2}(t_{j}- t_{j-1})^2 \leq  y_{j-1}\leq z_{j}(t_{j}- t_{j-1})+\frac{1}{2}(t_{j}- t_{j-1})^2
	$$
	for $j=2,\cdots,N$.
	To complete the proof, it suffices to show that conditions
	\begin{equation}
	|g(t)-g(t_{j-1})|\leq t-t_{j-1}
	\;\; \text{and} \;\;
	|g(t)-g(t_{j})|\leq t_{j}-t, \forall t\in [t_{j-1},t_j]
	\label{eq:g-calm-lft-rgt}
	\end{equation}
	are adequate to cover the generic condition
	\begin{equation}
	\label{eq:Lip-Kant}
	|g(t')-g(t'')| \leq |t'-t''|, \forall t',t''\in [a,b].
	\end{equation}
	We consider two cases.
	
	Case 1. $t', t'' \in [t_{i-1}, t_i]$ for some $i$.
	In this case, the generic condition is
	adequately covered by $|g(t)-g(t_{j-1})|\leq t-t_{j-1}$ for all $t\in[t_{j-1},t_j]$.
	Because the objective depends only
	on $\int_{t_{j-1}}^{t_j} g(t)dt$.
	
	Case 2. $t', t''$ lie in two  intervals, i.e., $t'\in [t_{i-1},t_{i}]$ and  $t''\in [t_{j-1},t_{j}]$, where $i< j$.
	Then by (\ref{eq:g-calm-lft-rgt}), 
	\begin{eqnarray*}
	|g(t')-g(t'')| &\leq&
	|g(t')-g(t_{i})|+ |g(t_{i})-g(t_{i+1})|+\cdots +|g(t_{j-2})-g(t_{j-1})|+|g(t_{j-1})-g(t'')|  \\
	&\leq& t_i-t' + t_{i+1}-t_{i} + \cdots+t_{j-1}-t_{j-2}+t''-t_{j-1}= t''-t'.
	\end{eqnarray*}
	The proof is complete.
	\hfill $\Box$
\end{proof}

\subsection{Alternating iterative algorithm for
	computing RMOCE
}
\label{subsec:numer-methods}

We are now ready to discuss how to compute the RMOCE with the ambiguity set of
piecewise linear utility functions constructed by the Kantorovich ball.
Assume 
that the probability distribution of random variable $\xi$ is discrete with $P(\xi=\xi_k)=p_k$ for $k=1,...,K$ and $u_N, u_N^0\in\mathscr U_N$.
Then we can rewrite the RMOCE problem (\ref{eq:OCE-new-robust}) as
\begin{equation}
{\rm (RMOCE-PLU)}  \quad \;\;
R_N(\xi):=\displaystyle{\max_{x\in \R} \min_{u_N\in \mathbb{B}_K(u^0_N,r)}  }  \;\;  u_N(x)+\sum_{k=1}^K p_k u_N(\xi_k-x).
\label{eq-RCE-zeta-N-discrete}
\end{equation}
Recall that in \Cref{Prop:Attain-max-RMOCE},
we show that the optimal solutions of MOCE are contained in interval $[\xi_{\min}/2,\xi_{\max}/2]$
when utility function is strictly concave. Unfortunately, this result is not applicable to  problem (\ref{eq-nonconcave-subproblem2})
because $u_N^s$ is piecewise linear.  However,
under some fairly moderate conditions, we are able to
show that the optimal solutions are bounded.
The next proposition states this.

\begin{prop}
	\label{prop-piecewiselinear-optimality}
	Consider MOCE problem (\ref{eq:OCE-new}).
	Let $X^*$ denote the set of optimal solutions.
	Assume: (a) $u$ is a piecewise linear concave function
	and (b) $u$ has at least two pieces in the interval $[\xi_{\min},\xi_{\max}]$.
	Then  the following assertions hold.
	\begin{itemize}
		\item[(i)]  $X^*$ is a compact and convex set.
		
		\item[(ii)] If $0\in [\xi_{\min},\xi_{\max}]$, then
		$X^*\subset [\xi_{\min},\xi_{\max}]$.
		
		\item[(iii)] If $\xi_{\min}\geq 0$, then
		$X^*\subset [0,\xi_{\max}]$.
		
		\item[(iv)] If $\xi_{\max}\leq 0$, then
		$X^*\subset [\xi_{\min},0]$.
		
	\end{itemize}
\end{prop}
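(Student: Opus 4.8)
The plan is to work throughout with the objective $F(x):=u(x)+\bbe_P[u(\xi-x)]$, which is concave in $x$ because $u$ is concave and $x\mapsto u(\xi-x)$ is concave for each fixed $\xi$, so its expectation is concave as well. All four assertions will be read off from this single concave function.

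For part (i), convexity of $X^*$ is immediate: if $x_1,x_2\in X^*$ both attain the maximal value, concavity gives $F(\lambda x_1+(1-\lambda)x_2)\geq \lambda F(x_1)+(1-\lambda)F(x_2)$, which must again equal the maximum, so the convex combination lies in $X^*$. For compactness I would first establish coercivity. Since $u$ is piecewise linear and concave, it has a leftmost slope $\beta_-$ (for very negative arguments) and a rightmost slope $\beta_+$ (for very positive arguments) with $\beta_-\geq\beta_+$; assumption (b), i.e.\ the presence of a genuine kink inside $[\xi_{\min},\xi_{\max}]$, forces the strict inequality $\beta_->\beta_+$. A direct asymptotic computation then gives $F(x)\sim(\beta_+-\beta_-)x$ as $x\to+\infty$ and $F(x)\sim(\beta_--\beta_+)x$ as $x\to-\infty$, so $F(x)\to-\infty$ at both ends. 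Hence the superlevel sets of $F$ are bounded, the maximum is attained, and $X^*$, being the superlevel set of the continuous concave $F$ at its maximal value, is closed and bounded, thus compact.

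For parts (ii)--(iv) I would treat all three cases simultaneously by proving $X^*\subseteq[\min\{0,\xi_{\min}\},\max\{0,\xi_{\max}\}]$, which specializes to the three stated intervals in the three sign configurations of $(\xi_{\min},\xi_{\max})$. The starting point is the first-order optimality condition already derived in the proof of \Cref{Prop:Attain-max-RMOCE} (which uses only concavity, not strict concavity): any $x^*\in X^*$ satisfies $u'_+(x^*)\leq\bbe_P[u'_-(\xi-x^*)]$ and $u'_-(x^*)\geq\bbe_P[u'_+(\xi-x^*)]$. Suppose, for contradiction, that $x^*<\min\{0,\xi_{\min}\}$, so $x^*<0$ and $x^*<\xi_{\min}$. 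Then $x^*<\xi-x^*$ for every $\xi$ in the support, and monotonicity of the secant slopes of the concave $u$ gives $u'_+(x^*)\geq u'_-(\xi-x^*)$ for each such $\xi$; combined with the first optimality inequality this forces $u'_-(\xi-x^*)=u'_+(x^*)$ for $P$-almost every $\xi$. Since the slope of $u$ is nonincreasing, this equality means $u$ is affine on each $[x^*,\xi-x^*]$, hence on $[x^*,\xi_{\max}-x^*]$. As $x^*<\xi_{\min}$ and $x^*<0$ yield $[\xi_{\min},\xi_{\max}]\subseteq[x^*,\xi_{\max}-x^*]$, the function $u$ would be affine on $[\xi_{\min},\xi_{\max}]$, contradicting assumption (b). The upper bound $x^*\leq\max\{0,\xi_{\max}\}$ follows by the symmetric argument applied to the second optimality inequality.

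The main obstacle is precisely this middle step: converting a one-sided optimality inequality into an exact equality and then into affineness of $u$ on a sub-interval. This requires pairing the optimality inequality with the reverse inequality supplied by concavity, checking that it holds for every $\xi$ in the support so that the ``average equals the extreme value'' reasoning upgrades to a pointwise $P$-a.s.\ equality, and finally verifying that the resulting interval of affineness swallows all of $[\xi_{\min},\xi_{\max}]$ so that assumption (b) is genuinely violated. Everything else -- concavity of $F$, convexity of $X^*$, and the coercivity bookkeeping for compactness -- is routine.
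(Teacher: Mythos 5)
Your proof is correct, and for parts (ii)--(iv) it runs on essentially the same engine as the paper's own proof: take the first-order optimality condition at $x^*$ (valid for merely concave $u$, as derived in the proof of \Cref{Prop:Attain-max-RMOCE}), pair it with the monotonicity of subgradients of a concave function to upgrade the one-sided inequality to a $P$-a.e.\ equality, conclude that $u$ is affine on an interval containing $[\xi_{\min},\xi_{\max}]$, and contradict assumption (b). The paper does exactly this case by case via the inclusions $[\xi_{\min},\xi_{\max}]\subset[\xi_{\min}-x^*,x^*]$ and $[\xi_{\min},\xi_{\max}]\subset[x^*,\xi_{\max}-x^*]$; your unified claim $X^*\subseteq[\min\{0,\xi_{\min}\},\max\{0,\xi_{\max}\}]$ packages the three cases more economically, and your ``average equals extreme value'' step spells out what the paper compresses into ``following a similar analysis'' (your inequality directions are also the correct ones: for concave $u$ the subgradients at the \emph{smaller} argument dominate, whereas the paper's wording states the reverse). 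Where you genuinely diverge is part (i). The paper argues that an unbounded $X^*$ would be a half-line and then reruns the optimality-condition contradiction at a sufficiently large $x^*\in X^*$; you instead prove coercivity of $F(x)=u(x)+\bbe_P[u(\xi-x)]$ directly from the asymptotic slopes, with the strict gap $\beta_->\beta_+$ supplied precisely by assumption (b). Your route buys something the paper leaves implicit: attainment, i.e.\ nonemptiness of $X^*$, since a continuous coercive concave function on $\R$ attains its maximum and its argmax set is a bounded closed superlevel set; the paper's argument only rules out unboundedness of a set it tacitly assumes nonempty. One point worth a sentence in a polished write-up: the a.e.\ equality $u'_-(\xi-x^*)=u'_+(x^*)$ gives affineness of $u$ on $[x^*,\xi-x^*]$ only for $\xi$ in a $P$-full set, so to reach $[x^*,\xi_{\max}-x^*]$ you should take such $\xi$ arbitrarily close to $\xi_{\max}$ (possible since $\xi_{\max}$ is the essential supremum) and invoke continuity of the piecewise linear $u$ to pass to the closed interval; this is routine and does not affect the argument.
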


\begin{proof}
Part (i). Observe first that $X^*$ is a convex set since
problem (\ref{eq:OCE-new}) is a convex optimization problem. Suppose for the sake of a contradiction that $X^*$ is unbounded. Then either $X^*$ is a right half line or a left half line. We consider the former.
In that case, there exists $x^*\in X^*$ sufficiently large such that
\begin{equation}
[\xi_{\min},\xi_{\max}]\subset [\xi_{\min}-x^*, x^*].
\label{eq:PLU-INCLUSION}
\end{equation}
By the first order optimality condition
\begin{equation}
\label{eq-optimalsol-piecewiselinear}
0\in\partial u(x^*)+\partial \bbe_P[u(\xi-x^*)]=
\partial u(x^*)-\bbe_P[\partial u(\xi-x^*)].
\end{equation}
The equality holds because of Clarke regularity, see
\cite{BCS20,Cla90}.
Since $x^* \geq \xi_{\max}-x^*$, then any subgradient
in set $\partial u(x^*)$ is greater or equal to
the subgradient from $\partial u(\xi-x^*)$ for all $\xi\in [\xi_{\min},\xi_{\max}]$.
This means the optimality condition holds if and only if
$x^*$ and $\xi_{\min}-x^*$ are in the domain of the same linear piece. But this contradicts assumption (b).
Using a similar argument, we can also show that $X^*$ cannot be
a left half line.

Part (ii). Assume for a contradiction that $x^*> \xi_{\max}$.
Then inclusion (\ref{eq:PLU-INCLUSION}) holds.
Following a similar analysis to that in Part (i), we can show that in this case $x^*$ does not satisfy (\ref{eq-optimalsol-piecewiselinear}).
If $x^*<\xi_{\min}\leq0$, then
\begin{equation}
[\xi_{\min},\xi_{\max}]\subset [x^*, \xi_{\max}-x^*].
\label{eq:PLU-INCLUSION-2}
\end{equation}
Consequently we can show that $X^*$ cannot satisfy the optimality condition (\ref{eq-optimalsol-piecewiselinear}).

Part (iii). In this case, we can show that  $x^*$ cannot be larger that  $\xi_{\max}$ because otherwise we would have (\ref{eq:PLU-INCLUSION}) and a contradiction to the optimality condition.
Likewise if $x^*<0$, then the inclusion (\ref{eq:PLU-INCLUSION-2}) would be invoked.

Part (iv) is similar to Part (iii), we omit the details. 
\hfill $\Box$
\end{proof}

Note that  if we strengthen the condition on
two linear pieces in the interval $[\xi_{\min},\xi_{\max}]$
to a smaller interval $[\xi_{\min}/2,\xi_{\max}/2]$,
then we will be able to strengthen the conclusions
in Parts (ii)-(iv) whereby $X^*$ is included in $[\xi_{\min}/2,\xi_{\max}/2]$, we leave readers for an exercise.

Now we propose
the alternating iterative algorithm
for solving the maximin problem (\ref{eq-RCE-zeta-N-discrete}).
\begin{algorithm}
\label{alg-nonconcave}
	Step 0. Choose an initial point $x^0$.
	
	Step 1. For s=1,..., solve
	\begin{equation}\label{eq-nonconcave-subproblem1}
	\displaystyle{u^s_N \in \arg \min_{u_N\in \mathbb{B}_K(u^0_N,r)}} u_N(x^{s-1})+
	\sum_{k=1}^K p_k u_N(\xi_k-x^{s-1})
	\end{equation}
	and
	\begin{equation}\label{eq-nonconcave-subproblem2}
	\displaystyle{x^s\in \arg \max_{x\in X} u_N^s(x)+  \sum_{k=1}^K p_k u_N^s(\xi_k-x),
	}
	\end{equation}
	where $X$ is a compact subset of $\R$.
	
	Step 2. Stop when $x^{s+1}=x^s$ and $u_N^{s+1}=u_N^s$.
\end{algorithm}

{\color{black} 
Note that in equation (\ref{eq-nonconcave-subproblem2}), we 
restrict 
$x$ to
taking values in a convex and compact set $X$ since 
 \Cref{prop-piecewiselinear-optimality} guarantees
 that the optimal $x^*$ is contained in such a set. 
There is another important issue concerning the algorithm, that is, whether a sequence $\{x^s\}$ generated by the algorithm converges to the optimal solution of (RMOCE-PLU). 
The next proposition addresses this. 

}

{\color{black}
\begin{prop}
	\Cref{alg-nonconcave} either terminates in a finite number of steps with a solution of the (RMOCE-PLU) model or generates a sequence $\{(x^s,u_N^s)\}$ whose cluster points, if exist, are optimal solution of the (RMOCE-PLU) model.
\end{prop}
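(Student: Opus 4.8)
The plan is to recast the maximin problem \eqref{eq-RCE-zeta-N-discrete} as a convex--concave saddle point problem and then read off optimality of a limit point directly from the defining inequalities of the two alternating subproblems. Write $F(x,u_N):=u_N(x)+\sum_{k=1}^K p_k u_N(\xi_k-x)$. For fixed $u_N\in\mathscr U_N$ the map $x\mapsto F(x,u_N)$ is concave, each term being $u_N$ evaluated at an affine function of $x$ exactly as in the proof of \Cref{prop:property-MOCE}(v); for fixed $x$ the map $u_N\mapsto F(x,u_N)$ is a linear functional of $u_N$. The ball $\mathbb B_K(u_N^0,r)$ is compact, convex and indeed polyhedral (the intersection of $\mathscr U_N$ with the linear slope constraints coming from the Kantorovich LP reformulation), and by \Cref{prop-piecewiselinear-optimality} the outer variable may be restricted to the compact convex set $X$ appearing in \eqref{eq-nonconcave-subproblem2}. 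Sion's minimax theorem then applies and yields $\max_{x\in X}\min_{u_N\in\mathbb B_K}F(x,u_N)=\min_{u_N\in\mathbb B_K}\max_{x\in X}F(x,u_N)=R_N(\xi)$ together with the existence of a saddle point. It therefore suffices to prove that a cluster point of $\{(x^s,u_N^s)\}$ is a saddle point of $F$, since the minimax equality then forces its $x$-component to solve (RMOCE-PLU).

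First I would dispose of finite termination. If $x^{s+1}=x^s$ and $u_N^{s+1}=u_N^s$, the defining relations \eqref{eq-nonconcave-subproblem1}--\eqref{eq-nonconcave-subproblem2} at index $s+1$ say that $u_N^s$ minimizes $F(x^s,\cdot)$ and $x^s$ maximizes $F(\cdot,u_N^s)$ simultaneously; hence $(x^s,u_N^s)$ is a saddle point and $\min_{u_N}F(x^s,u_N)=F(x^s,u_N^s)=\max_x F(x,u_N^s)=R_N(\xi)$, so $x^s$ is optimal.

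For the non-terminating case both iterate sequences lie in the compact set $X\times\mathbb B_K(u_N^0,r)$ on which $F$ is continuous, so a cluster point $(\bar x,\bar u_N)$ is attained along some subsequence $(x^{s_j},u_N^{s_j})\to(\bar x,\bar u_N)$. The outer step \eqref{eq-nonconcave-subproblem2} at index $s_j$ gives $F(x^{s_j},u_N^{s_j})\geq F(x,u_N^{s_j})$ for all $x\in X$, and letting $j\to\infty$ with continuity yields $\bar x\in\arg\max_{x\in X}F(x,\bar u_N)$; this uses only the common index $s_j$ and so avoids any off-by-one difficulty. It remains to verify the companion relation $\bar u_N\in\arg\min_{u_N}F(\bar x,u_N)$, for then $(\bar x,\bar u_N)$ is a saddle point. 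Because $x\mapsto\min_{u_N}F(x,u_N)$ is continuous and $F(x^{s_j},u_N^{s_j})=\max_x F(x,u_N^{s_j})$, the desired relation is equivalent to the vanishing along the subsequence of the duality gap $\mathrm{gap}(x^s,u_N^s):=F(x^s,u_N^s)-\min_{u_N}F(x^s,u_N)=F(x^s,u_N^s)-F(x^s,u_N^{s+1})\geq 0$.

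Driving this gap to zero is where I expect the real work to lie, since alternating best-response on a saddle problem need not decrease any obvious single merit function. Two routes seem promising. The first exploits the polyhedral structure: each subproblem is a linear program over a polytope, so the minimizer $u_N^s$ is a vertex of $\mathbb B_K(u_N^0,r)$ and, under a fixed selection rule, the maximizer $x^s$ is one of finitely many breakpoints; the deterministic update $x^{s-1}\mapsto u_N^s\mapsto x^s$ then makes $\{(x^s,u_N^s)\}$ eventually periodic, a period-one orbit being the finitely terminating case already handled, so the task reduces to excluding a non-optimal cycle. The second route invokes the closedness of the two solution maps together with the two-sided estimate $F(x^{s-1},u_N^s)\leq R_N(\xi)\leq F(x^s,u_N^s)$ (valid because the left side equals $\min_{u_N}F(x^{s-1},u_N)$ and the right side equals $\max_x F(x,u_N^s)$), and applies Zangwill's global convergence theorem with $\mathrm{gap}$ as the merit function. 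Establishing that $\mathrm{gap}(x^{s_j},u_N^{s_j})\to0$ by either route is the crux; the remaining steps are routine continuity and compactness arguments.
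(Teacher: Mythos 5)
Your proposal is incomplete at exactly the step you flag: you prove the first saddle inequality, $\bar x\in\arg\max_{x\in X}F(x,\bar u_N)$, via the common-index relation at $s_j$, but you never establish the companion relation $\bar u_N\in\arg\min_{u_N\in\mathbb{B}_K(u_N^0,r)}F(\bar x,u_N)$; you reduce it to ``duality gap $\to 0$ along the subsequence'' and sketch two routes without carrying either out. This is a genuine gap, not a routine loose end. The first inequality alone only gives $F(\bar x,\bar u_N)=\max_{x\in X}F(x,\bar u_N)\geq R_N(\xi)$, which does not make $\bar x$ optimal for (RMOCE-PLU). Moreover, the difficulty cannot be dissolved by compactness, continuity and Sion's theorem alone: for a general concave--convex function, alternating exact best responses can cycle --- for $F(x,u)=xu$ on $[-1,1]^2$ the iterates cycle through $(\pm 1,\mp 1)$, whose cluster points are not saddle points and whose $x$-components are not maximin optimal --- so any completion must exploit structure beyond what you (or the paper) invoke.

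What makes this worth saying in detail is that the point where you stopped is precisely the point the paper's own proof glosses over. The paper writes down the two algorithmic relations ($u^{s+1}$ minimizes $F(x^s,\cdot)$; $x^s$ maximizes $F(\cdot,u^s)$ --- note the displayed inequality (\ref{eq-saddle1}) has its sign reversed, a typo, since the subsequent contradiction argument uses the $\geq$ version), proves the first inequality of (\ref{eq-saddle}) by the same continuity/contradiction argument you use, and then asserts the second ``in the same manner.'' It does not follow in the same manner: because $u^{s}$ best-responds to $x^{s-1}$ rather than to $x^{s}$, the symmetric limit argument needs $x^{s_j-1}$ to converge to the same limit $\bar x$ as $x^{s_j}$. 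The paper hides this by saying ``for simplicity of notation, assume $(x^s,u^s)\to(\hat x,\hat u)$,'' but replacing a cluster point by a convergent subsequence is not innocuous here, since the relation defining $u^{s_j}$ refers to the off-subsequence point $x^{s_j-1}$; the argument is rigorous only under the stronger hypothesis that the whole sequence converges. So your attempt follows the paper's strategy (saddle-point reduction, identical finite-termination argument, identical limit argument for the $x$-part), and your ``off-by-one'' diagnosis is exactly the obstruction the paper elides --- but neither your sketched routes nor the paper's ``same manner'' actually closes the $u$-part for arbitrary cluster points, so your proposal, as it stands, does not prove the stated proposition.
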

}
\begin{proof}
    Let $(x^*,u^*)$ be a cluster point of the sequence generated by Algorithm 3.1. Then for all $\mathbb{B}_K(u^0_N,r)$ and $x\in X$,
    \begin{equation}
        \label{eq-saddle}
        u^*(x)+\bbe_P[u^*(\xi-x)]\leq u^*(x^*)+\bbe_P[u^*(\xi-x^*)]\leq
    u(x^*)+\bbe_P[u(\xi-x^*)].
    \end{equation}
    For $s=1,2,...$,
    $$
    u^{s+1}(x^s)+\bbe_P[u^{s+1}(\xi-x^s)]\leq u(x^s)+\bbe_P[u(\xi-x^s)]
    $$
    and 
    \begin{equation}
        \label{eq-saddle1}
        u^s(x^s)+\bbe_P[u^s(\xi-x^s)]\leq u^s(x)+\bbe_P[u^s(\xi-x)].
    \end{equation}
    If Algorithm 3.1 terminates in finite steps, then $x^{s+1}=x^s$ and $u^{s+1}=u^s$ for some $s$ and $(x^s,u^s)$ satisfies (\ref{eq-saddle}).
    In what follows we consider the case that Algorithm 3.1 generates an infinite sequence $\{(x^s,u^s)\}$.
    Let $(\hat x,\hat u)$ be a cluster point of $\{(x^s,u^s)\}$. For the simplicity of notation, we assume that $(x^s,u^s)\rightarrow(\hat x,\hat u)$. 
    If $(\hat x,\hat u)$ is not a saddle point, then it violates one of the inequalities in (\ref{eq-saddle}).
    Without loss of generality, 
    consider the case that the first inequality of (\ref{eq-saddle}) is violated, that is, there exists $x_0$ such that 
    $$
    \hat u(x_0)+\bbe_P[\hat u(\xi-x_0)]>\hat u(\hat x)+\bbe_P[\hat u(\xi-\hat x)].
    $$
    Since $\hat u$ is continuous, then for sufficiently large $s$,
    $$
    u^s(x_0)+\bbe_P[u^s(\xi-x_0)]>u^s(x^s)+\bbe_P[u^s(\xi-x^s)],
    $$
    which is a contradiction to (\ref{eq-saddle1}).
    In the same manner, we can show that $(\hat x,\hat u)$ satisfies the second inequality in (\ref{eq-saddle}).
    The proof is complete.
    \hfill $\Box$
\end{proof}

Note that the cluster point is indeed a saddle point of the maximin problem (\ref{eq-RCE-zeta-N-discrete}) and existence of the latter is guaranteed by the fact that the objective function is linear in $u$ and concave in $x$.
Problem (\ref{eq-nonconcave-subproblem1}) is a convex problem because $\mathbb{B}_K(u^0_N,r)$ is a compact and convex set. By writing each utility function $u_N\in\mathscr U_N$ as
\begin{equation}\label{eq-nonconcave-utilityfunction}
u_N(t)=(a_1t+b_1)\mathds{1}_{[t_1,t_2]}(t)
	+\sum_{j=2}^{N-1}(a_jt+b_j)\mathds{1}_{(t_j,t_{j+1}]}(t)
\end{equation}
for $t\in[a,b]$ and writing down the Lagrange dual of
problem (\ref{eq:Kant-u-v-LP}),
\begin{subequations}
	\label{eq:Kant-u-v-LP-dual}
	\begin{eqnarray}
		\displaystyle \min_{\substack{\lambda^i_j, i=1,2,3,4 \\ j=2,\cdots,N}}
		&& -\frac{1}{2}\sum_{j=2}^N (\lambda_j^1+\lambda_j^2+\lambda_j^3+\lambda_j^4)(t_{j}-t_{j-1})^2 \\
		{\rm s.t.}   ~~~~
		&&  (\beta_{j-1} -\beta_{j-1}^0 )+ (\lambda_j^1-\lambda_j^2+\lambda_j^3-\lambda_j^4)=0,  j=2,\cdots,N, \\
		&& (\lambda_{j+1}^2-\lambda_{j+1}^1)(t_{j+1}-t_j) +(\lambda_j^4-\lambda_j^3)(t_j-t_{j-1})=0, \nonumber  \\
		&& \qquad \qquad \qquad \qquad \qquad \qquad \qquad \qquad \qquad j=2,\cdots,N-1, \\
		&& (\lambda_2^2-\lambda_2^1)(t_2-t_1)=0, \\
		&& (\lambda_N^4-\lambda_N^3)(t_N-t_{N-1})=0, \\
		&& \lambda_j^i\leq 0, j=2,\cdots,N, i=1,2,3,4.
	\end{eqnarray}
\end{subequations}
We can effectively reformulate
problem (\ref{eq-nonconcave-subproblem1}) as a linear program:
{\color{black}
	\begin{subequations}\label{eq-nonconcave-subproblem1-var}
		\begin{eqnarray}
		(a^s,b^s) \in \displaystyle \arg \min_{\substack{a_j,b_j, \\ j=1,\cdots,N-1}}
		&&(a_1x^{s-1} +b_1) \mathds{1}_{[t_1,t_{2}]} (x^{s-1})+\sum_{j=2}^{N-1} (a_jx^{s-1}+b_j) \mathds{1}_{(t_j,t_{j+1}]} (x^{s-1})\nonumber\\
		&&+\sum_{k=1}^K p_k \Bigl\{ (a_1 (\xi^k-x^{s-1}) +b_1) \mathds{1}_{[t_1,t_{2}]} (\xi^k-x^{s-1})\Bigr.\nonumber\\
		&&\Bigl. + \sum_{j=2}^{N-1} (a_j (\xi^k-x^{s-1}) +b_j) \mathds{1}_{(t_j,t_{j+1}]} (\xi^k-x^{s-1})  \Bigr\} \nonumber\\
		\mbox{s.t.}~~~~
		&&a_{j-1} t_{j}+b_{j-1}=a_{j} t_{j}+b_{j}, j=2,\cdots,N-1, \label{eq-nonconcave-subproblem1-a}\\
		&&a_1t_1+b_1=0, \label{eq-nonconcave-subproblem1-b} \\
		&&a_{N-1}t_N+b_{N-1}=1, \label{eq-nonconcave-subproblem1-c} \\
		&&a_{j+1}\leq a_j, j=1,\cdots,N-2, \label{eq-nonconcave-subproblem1-con}\\
		&&0\leq a_j\leq L, j=1,\cdots,N-1, \label{eq-nonconcave-subproblem1-d}\\
		&&-\frac{1}{2}\sum_{j=2}^N (\lambda_j^1+\lambda_j^2+\lambda_j^3+\lambda_j^4)(t_{j}-t_{j-1})^2\leq r,\qquad \qquad \label{eq-nonconcave-subproblem1-e}\\
		&&  (a_{j-1} -a_{j-1}^0 )+ (\lambda_j^1-\lambda_j^2+\lambda_j^3-\lambda_j^4)=0,  j=2,\cdots,N, \label{eq-nonconcave-subproblem1-f} \\
		&& (\lambda_{j+1}^2-\lambda_{j+1}^1)(t_{j+1}-t_j) +(\lambda_j^4-\lambda_j^3)(t_j-t_{j-1})=0,  \nonumber \\
		&&  \qquad \qquad \qquad \qquad \qquad \qquad \qquad \qquad j=2,\cdots,N-1, \label{eq-nonconcave-subproblem1-g} \\
		&& (\lambda_2^2-\lambda_2^1)(t_2-t_1)=0, \label{eq-nonconcave-subproblem1-h}\\
		&& (\lambda_N^4-\lambda_N^3)(t_N-t_{N-1})=0, \label{eq-nonconcave-subproblem1-i} \\
		&& \lambda_j^i\leq 0, j=2,\cdots,N, i=1,2,3,4, \nonumber
		\end{eqnarray}
	\end{subequations}
}
where $a_{j-1}^0$ denotes the slope of $u_N^0$ at interval $[t_{j-1},t_j]$.
Constraint (\ref{eq-nonconcave-subproblem1-a}) requires the piecewise linear function to be continuous at the kinks, constraint (\ref{eq-nonconcave-subproblem1-b}) and (\ref{eq-nonconcave-subproblem1-c}) represent the normalized conditions, (\ref{eq-nonconcave-subproblem1-con}) requires the concavity of utility function, (\ref{eq-nonconcave-subproblem1-d}) represents the Lipschitz condition, constraints (\ref{eq-nonconcave-subproblem1-e})-(\ref{eq-nonconcave-subproblem1-i}) represent the bounded Kantorovich ball.
Note that here we use the Lagrange dual problem  (\ref{eq:Kant-u-v-LP-dual})
instead of the primal problem  (\ref{eq:Kant-u-v-LP})
because the latter would have bilinear terms
$(\beta_{j-1}-\beta_{j-1}^0)y_{j-1}$ otherwise.

\section{RMOCE with non-piecewise linear utility functions}
\label{sec:error}

The computational schemes that we discussed in the previous section are applicable to the case when the ambiguity set is
constructed by a Kantorovich ball of piecewise linear utility functions. In practice, the utility functions are not necessarily
piecewise linear. This raises a question as to how much we may miss if we use ${\rm (RMOCE-PLU)}$
to compute (RMOCE) with the ambiguity set constructed by
the Kantorovich ball of general utility functions.
In this section, we address the issue which is essentially about
error bound of modelling error.
To maximize the scope of coverage, we consider $\zeta$-ball instead of the Kantovich ball.
Let $\mathscr U_L$ be a class of continuous, non-decreasing, concave functions
defined over $[a,b]$ with Lipschitz condition with moludus $L$ and normalized conditions $u(a)=0$ and $u(b)=1$. For $u^0\in\mathscr U_L$, we define
\begin{equation}
\mathbb{B}(u^0,r) :=\left\{u\in \mathscr{U}_L \;|\; \dd_\mathscr{G}(u,u^0)\leq r\right\},
\label{eq:zeta-ball}
\end{equation}
where 
\begin{equation}
\displaystyle{\dd_\mathscr G (u,v):= \sup_{g\in\mathscr G}|\langle g,u\rangle-\langle g,v\rangle|},
\label{pseu-metric}
\end{equation}
$\mathscr G$ is a set of measurable functions defined over $\R$ and
$\langle g,u\rangle:=\int_{\R} g(t)du(t)$.
$\dd_{\mathscr{G}}$ is known as a pseudo metric.
It can be observed that $\dd_\mathscr G (u,v)=0$ if and only if $\langle g,u\rangle=\langle g,v\rangle$ for all $g\in\mathscr G$ but not necessarily $u=v$ unless $\mathscr{G}$ is sufficiently large.  By specifying particular
properties of functions in set $\mathscr{G}$, we may obtain  some specific metric such as Kantorovich metric $\dd_K$ and the Kolmogorov metric
with
$\mathscr{G}= \displaystyle{\mathscr G_{I}}$, where
$\mathscr{G}_I$ consists of all indicator functions defined as
\begin{equation}
\mathds{1}_{(a, t]}(s):=
\begin{cases}
	1 & \text{if }s\in (a, t], \\
	0 & \text{otherwise}.
\end{cases}
\label{eq:idicator-fncts}	
\end{equation}

With the definition of the $\zeta$-ball and $u^0\in\mathscr U_L$, we may define
the corresponding RMOCE as
\begin{equation}
{\rm (RMOCE(\zeta))}  \quad \;\; R(\xi):=\displaystyle{\max_{x\in \R} \min_{u\in \mathbb{B}(u^0,r)}  }  \;\;  u(x)+\bbe_P[u(\xi-x)]
\label{eq:RCE-zeta}
\end{equation}
and the one when the utility functions are restricted to be piecewise linear:
\begin{equation}
{\rm (RMOCE(\zeta,N))}  \quad \;\;
R_N(\xi):=\displaystyle{\max_{x\in \R}
	\min_{u_N\in \mathbb{B}_N(u_N^0,r)}  }  \;\;  u_N(x)+\bbe_P[u_N(\xi-x)],
\label{eq:RCE-zeta-N}
\end{equation}
where  \begin{equation}
\mathbb{B}_N(u^0_N,r) :=\left\{u_N\in \mathscr{U}_N: \dd_\mathscr{G}(u_N,u^0_N)\leq r\right\}.
\label{eq:zeta-ball-N}
\end{equation}
We investigate the difference between
$\mathbb{B}_N(u_N^0,r)$ and $\mathbb{B}(u^0,r)$
and its propagation to the optimal values. Let $\mathcal U_1$ and $\mathcal U_2$ be two sets of utility function, $\dd_{\mathscr G}(u,\mathcal U_1):=\inf_{\tilde u\in\mathcal U_1}\dd_{\mathscr G}(u,\tilde u)$ between $u$ and $\mathcal U_1$, $\mathbb D_{\mathscr G}(\mathcal U_1,\mathcal U_2):=\sup_{u\in\mathcal U_1}\dd_{\mathscr G}(u,\mathcal U_2)$ be the deviation distance of $\mathcal U_1$ from $\mathcal U_2$, and $\mathbb H_{\mathscr G}(\mathcal U_1,\mathcal U_2):=\max\{\mathbb D_{\mathscr G}(\mathcal U_1,\mathcal U_2),\mathbb D_{\mathscr G}(\mathcal U_2,\mathcal U_1)\}$ be the Hausdorff distance between $\mathcal U_1$ and $\mathcal U_2$.



\subsection{Error bound on the ambiguity set}



We start by quantifying the difference between the ambiguity sets.
To this effect, we need a couple of technical results.

\begin{prop}(\cite[Proposition 4.1]{GuX21})
	\label{p-PC-appr}
	For each fixed $u\in \mathscr{U}_L$,
	let $u_N\in\mathscr{U}_N$ be such that 
	$u_N(t_i)=u(t_i)$ for $i=1,...N$
	and
	\begin{equation}
	u_N(t) := u(t_{i-1}) + \frac{u(t_{i})- u(t_{i-1})}{t_i-t_{i-1}} (t-t_{i-1}) \; {\rm for} \; t\in [t_{i-1},t_i], \; i=2,\cdots,N.
	\label{eq:uN-OCE}
	\end{equation}
	Then
	\begin{equation}
	\|u_N-u\|_\infty:=\sup_{t\in [a,b]} |u_N(t)-u(t)| \leq L\beta_N,
	\label{uN-appr-u}
	\end{equation}
	where
	$\beta_N:= \max_{i=2,\cdots,N} (t_i-t_{i-1})$.
	Moreover, in the case when $\mathscr G=\mathscr G_K$, it holds that
	\begin{equation}
	\label{eq:u-N-appr-u-zeta-metric}
	\displaystyle{\dd_{\mathscr G}(u,u_N) \leq 2\beta_N}.
	\end{equation}
	In the case when $\mathscr{G}=\mathscr{G}_I$,
	$ \dd_{\mathscr G}(u,u_N) \leq L\beta_N$.
\end{prop}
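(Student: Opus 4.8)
The plan is to establish the three claims in the order in which they build on one another: the uniform bound \rfp{uN-appr-u} first, since the Kolmogorov estimate is an immediate consequence of it, and the Kantorovich estimate \rfp{eq:u-N-appr-u-zeta-metric} last, as it is the one requiring real work. For \rfp{uN-appr-u} I would begin by recording that concavity of $u$ forces the interpolant below it: on each $[t_{i-1},t_i]$ the graph of $u_N$ is the chord joining $(t_{i-1},u(t_{i-1}))$ and $(t_i,u(t_i))$, so $u_N(t)\le u(t)$ and hence $u(t)-u_N(t)\ge 0$ on all of $[a,b]$. For the upper bound, fix $t\in[t_{i-1},t_i]$ and write $u(t)-u_N(t)=\bigl(u(t)-u(t_{i-1})\bigr)-\bigl(u_N(t)-u_N(t_{i-1})\bigr)$ using $u_N(t_{i-1})=u(t_{i-1})$. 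Since $u$ is non-decreasing, the interpolating slope is $\ge 0$, so the second bracket is non-negative, while the first is at most $L(t-t_{i-1})\le L\beta_N$ by the Lipschitz property. This yields $0\le u(t)-u_N(t)\le L\beta_N$ and hence \rfp{uN-appr-u}.

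The Kolmogorov estimate is then essentially free. For $g=\mathds{1}_{(a,s]}\in\mathscr G_I$ one has $\langle g,u\rangle=\int_{(a,s]}du=u(s)-u(a)=u(s)$ and likewise $\langle g,u_N\rangle=u_N(s)$, so $\dd_{\mathscr G}(u,u_N)=\sup_{s\in[a,b]}|u(s)-u_N(s)|=\|u-u_N\|_\infty$, which is at most $L\beta_N$ by the previous step.

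The Kantorovich bound is where the substance lies, and the key observation is that the interpolation matches mass on every subinterval: because $u_N(t_i)=u(t_i)$, the increments obey $u_N(t_i)-u_N(t_{i-1})=u(t_i)-u(t_{i-1})=:\Delta_i$, so the measures $du$ and $du_N$ assign the same mass $\Delta_i$ to each $[t_{i-1},t_i]$. Fixing any $g\in\mathscr G_K$, I would decompose $\langle g,u\rangle-\langle g,u_N\rangle=\sum_{i=2}^{N}\int_{t_{i-1}}^{t_i}g\,d(u-u_N)$ and, since $\int_{t_{i-1}}^{t_i}d(u-u_N)=0$, subtract the constant $g(t_{i-1})$ on each subinterval without changing the integral:
\[
\int_{t_{i-1}}^{t_i}g\,d(u-u_N)=\int_{t_{i-1}}^{t_i}\bigl(g(t)-g(t_{i-1})\bigr)\,d(u-u_N)(t).
\]
Bounding the difference of measures by their sum and using $|g(t)-g(t_{i-1})|\le t-t_{i-1}\le\beta_N$ (the defining $1$-Lipschitz property of $g$) gives $\bigl|\int_{t_{i-1}}^{t_i}g\,d(u-u_N)\bigr|\le\beta_N(\Delta_i+\Delta_i)=2\beta_N\Delta_i$. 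Summing over $i$ and using $\sum_i\Delta_i=u(b)-u(a)=1$ produces $|\langle g,u\rangle-\langle g,u_N\rangle|\le 2\beta_N$ uniformly in $g$, hence $\dd_{\mathscr G}(u,u_N)\le 2\beta_N$.

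The main obstacle, and the only step needing genuine insight, is the Kantorovich estimate: the naive route $\dd_{\mathscr G}(u,u_N)\le\int_a^b|u-u_N|\,dt\le (b-a)\|u-u_N\|_\infty$ would carry an unwanted factor $L(b-a)$, so one must exploit the equal increments $\Delta_i$ rather than the pointwise estimate. Subtracting $g(t_{i-1})$ is legitimate precisely because $du$ and $du_N$ have matching mass on each $[t_{i-1},t_i]$, and this is what converts the $1$-Lipschitz control of $g$ into the mesh size $\beta_N$ while eliminating the dependence on $L$ and $b-a$; the factor $2$ is the only slack, arising from bounding $|d(u-u_N)|$ by $du+du_N$, and a transport (Wasserstein) argument would in fact sharpen it to $\beta_N$.
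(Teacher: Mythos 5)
Your proof is correct and self-contained; note that the paper itself does not prove this proposition at all---it is quoted from \cite[Proposition 4.1]{GuX21}---so there is no in-paper argument to compare against, and your proposal in effect supplies the missing proof. All three steps are sound: the uniform bound follows from $u_N\leq u$ (concavity) together with $u(t)-u_N(t)\leq u(t)-u(t_{i-1})\leq L\beta_N$ (monotonicity of $u_N$ plus the Lipschitz property); the Kolmogorov bound is immediate since $\langle \mathds{1}_{(a,s]},u\rangle=u(s)$ under the normalization $u(a)=0$; and the Kantorovich bound correctly exploits the fact that $du$ and $du_N$ place equal mass $\Delta_i=u(t_i)-u(t_{i-1})$ on each cell $[t_{i-1},t_i]$, which is what licenses subtracting the constant $g(t_{i-1})$ and converts the $1$-Lipschitz control of $g$ into the mesh size $\beta_N$, with $\sum_i\Delta_i=u(b)-u(a)=1$ closing the estimate. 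This is exactly the right mechanism: the naive route through $\|u-u_N\|_\infty$ would pick up a spurious factor of order $L(b-a)$. Two minor points worth one sentence each in a written version: since $u$ and $u_N$ are continuous, the Lebesgue--Stieltjes measures $du$ and $du_N$ are atomless, so splitting $\int_a^b$ into $\sum_i\int_{t_{i-1}}^{t_i}$ involves no double counting at the nodes; and your closing remark that a transport argument sharpens $2\beta_N$ to $\beta_N$ is also correct, since by Kantorovich--Rubinstein duality $\dd_{\mathscr G_K}(u,u_N)=\int_a^b|u(t)-u_N(t)|\,dt$ and $0\leq u-u_N\leq\Delta_i$ on each cell gives $\int_a^b|u-u_N|\,dt\leq\beta_N\sum_i\Delta_i=\beta_N$.
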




Here and later on, we call $u_N$ defined in (\ref{eq:u-N-appr-u-zeta-metric}) as a projection of $u$ on $\mathscr{U}_N$.
Next, we quantify the deviation distance and 
Hausdorff distance between $\zeta$-balls in the $\mathscr{U}_L$ and 
$\mathscr{U}_N$.
\begin{lem}
	\label{lem-dist-ball}
	Let $u_N\in\mathscr{U}_N$, $u\in\mathscr{U}_L$ and $\delta$, $r$ be any positive numbers. Then the following holds:
	\begin{itemize}
		\item[(i)] $\mathbb{D}_{\mathscr{G}}(\mathbb{B}_N(u_N,r+\delta),\mathbb{B}_N(u_N,r))
		\leq\delta$,
		$\mathbb{D}_{\mathscr{G}}(\mathbb{B}(u,r+\delta),\mathbb{B}(u,r))\leq\delta$,
		
		\item[(ii)] If $u_N$ is defined as in {\rm(\ref{eq:uN-OCE})} and $\mathscr{G}=\mathscr{G}_K\cup\mathscr{G}_I$, then $\mathbb{H}_{\mathscr{G}}(\mathbb{B}_N(u_N,r),\mathbb{B}(u,r)) \leq \max\{2,L\}\beta_N$ and $\mathbb{D}_{\mathscr{G}}(\mathbb{B}(u_N,r+\delta),\mathbb{B}(u_N,r))
		\leq\delta+2\max\{2,L\}\beta_N$.
	\end{itemize}
\end{lem}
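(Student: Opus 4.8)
The plan is to establish (i) by a convexity-and-radial-scaling argument and then bootstrap (ii) from (i) together with the uniform interpolation estimate of \Cref{p-PC-appr}.

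For (i), I would first note that $\mathscr{U}_N$ and $\mathscr{U}_L$ are convex, since concavity, monotonicity, the modulus-$L$ Lipschitz bound and the normalization $u(a)=0,u(b)=1$ all survive convex combinations, and that the pairing $u\mapsto\langle g,u\rangle=\int g\,du$ is linear in $u$. Hence, for a fixed center and any $w$ in the larger ball, the radial contraction $\tilde w:=(1-\lambda)u_N+\lambda w$ satisfies $\dd_{\mathscr{G}}(\tilde w,u_N)=\lambda\,\dd_{\mathscr{G}}(w,u_N)$ and $\dd_{\mathscr{G}}(w,\tilde w)=(1-\lambda)\,\dd_{\mathscr{G}}(w,u_N)$, because the differences of pairings factor out $\lambda$ and $1-\lambda$ respectively for every $g\in\mathscr{G}$. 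Choosing $\lambda:=r/(r+\delta)$ forces $\tilde w$ into the smaller ball while keeping $\dd_{\mathscr{G}}(w,\tilde w)\le(1-\lambda)(r+\delta)=\delta$; taking the supremum over $w$ gives both inequalities in (i), verbatim for the $\mathbb{B}_N$ and the $\mathbb{B}$ versions.

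For the Hausdorff bound in (ii), the first step is to record the uniform projection estimate: applying \Cref{p-PC-appr} to each generator family and taking the maximum over $\mathscr{G}=\mathscr{G}_K\cup\mathscr{G}_I$ yields $\dd_{\mathscr{G}}(v,v_N)\le\max\{2,L\}\beta_N=:c$ for every $v\in\mathscr{U}_L$, where $v_N$ is the interpolation (\ref{eq:uN-OCE}); in particular $\dd_{\mathscr{G}}(u,u_N)\le c$. One direction of the Hausdorff distance is then easy: since $\mathscr{U}_N\subset\mathscr{U}_L$, the triangle inequality gives the inclusion $\mathbb{B}_N(u_N,r)\subset\mathbb{B}(u,r+c)$, and applying (i) with $\delta=c$ yields $\mathbb{D}_{\mathscr{G}}(\mathbb{B}_N(u_N,r),\mathbb{B}(u,r))\le c$. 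For the reverse direction I would project: to each $w\in\mathbb{B}(u,r)$ associate its interpolation $w_N\in\mathscr{U}_N$, which obeys $\dd_{\mathscr{G}}(w,w_N)\le c$, so that it only remains to house a radial correction of $w_N$ inside $\mathbb{B}_N(u_N,r)$.

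The main obstacle is exactly this last step, namely controlling $\dd_{\mathscr{G}}(w_N,u_N)$. The clean route would be to show the interpolation is non-expansive, $\dd_{\mathscr{G}}(w_N,u_N)\le\dd_{\mathscr{G}}(w,u)\le r$, which would place $w_N$ directly in $\mathbb{B}_N(u_N,r)$ and deliver the sharp constant $c$. For the Kolmogorov part this is immediate, because $\dd_{\mathscr{G}_I}(w,u)=\|w-u\|_\infty$ and the piecewise linear interpolation of $w-u$ attains its sup-norm at the nodes, where it agrees with $w-u$. For the Kantorovich part, however, $\dd_{\mathscr{G}_K}(w,u)=\int_a^b|w-u|\,dt$, and the corresponding inequality $\int|w_N-u_N|\le\int|w-u|$ can genuinely fail: a difference $w-u$ that spikes at the nodes but is nearly flat in between (realizable with $w,u$ both concave via the defect decomposition $w-u=(w_N-u_N)+(p-q)$, $p:=w-w_N\ge0$, $q:=u-u_N\ge0$) is filled in by the interpolation and inflates the $L^1$ distance. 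Thus non-expansiveness is unavailable for the combined metric, and the real content of the proof is to show that this inflation is nonetheless controlled at the scale of $c=\max\{2,L\}\beta_N$ rather than at the scale of the ball radius; the safe but lossy fallback, replacing non-expansiveness by $\dd_{\mathscr{G}}(w_N,u_N)\le\dd_{\mathscr{G}}(w_N,w)+\dd_{\mathscr{G}}(w,u)+\dd_{\mathscr{G}}(u,u_N)\le r+2c$ and then contracting radially as in (i), only gives the weaker constant $3c$. The second inequality of (ii) should follow by the same projection-plus-contraction template: projecting a generic element at cost $c$, passing between $\mathbb{B}(u_N,\cdot)$ and the piecewise-linear ball $\mathbb{B}_N(u_N,\cdot)$ at a second cost $c$, and absorbing the radius gap $\delta$ via (i), which accounts for the stated summand $\delta+2\max\{2,L\}\beta_N$.
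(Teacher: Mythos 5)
Your part (i) is exactly the paper's argument: the paper contracts $\tilde u_N$ radially toward the center via $u_N^\lambda=\lambda\tilde u_N+(1-\lambda)u_N$ with $\lambda=r/\dd_{\mathscr{G}}(u_N,\tilde u_N)$ (landing on the sphere of radius $r$), while you contract by the fixed factor $r/(r+\delta)$; both rest on the linearity of $u\mapsto\langle g,u\rangle$ and the convexity of $\mathscr{U}_N$ and $\mathscr{U}_L$, and both are fine. Likewise, your forward bound $\mathbb{D}_{\mathscr{G}}(\mathbb{B}_N(u_N,r),\mathbb{B}(u,r))\le c$, $c:=\max\{2,L\}\beta_N$, via the inclusion $\mathbb{B}_N(u_N,r)\subset\mathbb{B}(u,r+c)$ from \Cref{p-PC-appr} and part (i), is the paper's argument verbatim; and the second inequality of (ii) is indeed obtained in the paper by the sandwich $\mathbb{B}(u_N,r+\delta)\subset\mathbb{B}(u,r+\delta+\dd_{\mathscr{G}}(u,u_N))$ and $\mathbb{B}(u,r-\dd_{\mathscr{G}}(u,u_N))\subset\mathbb{B}(u_N,r)$ followed by (i), essentially your template (in fact, since $u_N\in\mathscr{U}_N\subset\mathscr{U}_L$, part (i) applied with center $u_N$ already gives the sharper bound $\delta$, so this claim is not where any difficulty lies).

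The obstacle you isolated in the reverse deviation $\mathbb{D}_{\mathscr{G}}(\mathbb{B}(u,r),\mathbb{B}_N(u_N,r))$ is a genuine gap, and you should know that the paper does not close it either: its proof dismisses this direction with the single word ``Similarly,'' but the argument is not symmetric. The forward direction uses $\mathbb{B}_N(u_N,r)\subset\mathbb{B}(u,r+c)$, which makes sense because $\mathscr{U}_N\subset\mathscr{U}_L$; the mirror inclusion $\mathbb{B}(u,r)\subset\mathbb{B}_N(u_N,r+c)$ is impossible, since a non-piecewise-linear member of $\mathbb{B}(u,r)$ lies in no ball $\mathbb{B}_N(\cdot,\cdot)$ at all. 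What ``similarly'' legitimately yields is only $\mathbb{B}(u,r)\subset\mathbb{B}(u_N,r+c)$, hence $\mathbb{D}_{\mathscr{G}}(\mathbb{B}(u,r),\mathbb{B}(u_N,r))\le c$ against the $\mathscr{U}_L$-ball centered at $u_N$, not against $\mathbb{B}_N(u_N,r)$; bridging that last step forces the projection you describe, and your diagnosis of where it hurts is correct (nodal interpolation is non-expansive for $\dd_{\mathscr{G}_I}$, i.e.\ the sup-norm, but can inflate $\dd_{\mathscr{G}_K}$, i.e.\ the $L^1$ distance of the utilities, when $\tilde u-u$ is peaked at nodes). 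The honest constant obtainable this way is your $3c$ (or $c+4\beta_N$ if one combines sup-norm non-expansiveness with $\dd_{\mathscr{G}_K}(v,v_N)\le 2\beta_N$ from \Cref{p-PC-appr}), not $c$; whether the stated constant $c$ is attainable at all would require a finer use of concavity (which caps the aggregate ``spike mass'' through the total variation of the slopes), and neither you nor the paper provides such an argument. The damage is contained, however: in \Cref{theorem-dist-ball} the paper only invokes the two-$\mathscr{U}_L$-ball estimate $\mathbb{H}_{\mathscr{G}}(\mathbb{B}(u,r),\mathbb{B}(u_N,r))\le c$, which does follow from part (i), and it re-derives the comparison with $\mathbb{B}_N(u_N,r)$ by a separate projection argument with constants $2c$ and $4c$; so weakening the first claim of (ii) to $3c$ leaves \Cref{theorem-dist-ball} and the subsequent error bound on $|R(\xi)-R_N(\xi)|$ valid up to constants. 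In short, your proposal is correct exactly where the paper's proof is correct, and the step you could not complete at constant $c$ is precisely the unproved step hidden behind the paper's ``Similarly.''
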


\begin{proof}
	The proof is similar to that of \cite{WX20}, here we include a sketch for self-containedness.
	
	Part (i). We only prove the first inequality, as the second one can be proved analogously. Let $\tilde u_N\in\mathbb{B}_N(u_N,r+\delta) \setminus \mathbb{B}_N(u_N,r)$ and $u_N^\lambda:=\lambda \tilde u_N+(1-\lambda)u_N\in\mathscr{U}_N$, where $\lambda:=r/\dd_{\mathscr{G}}(u_N,\tilde u_N)\in(0,1)$. By the definition of $\dd_{\mathscr{G}}$, we have $\dd_{\mathscr{G}}(u_N^\lambda,u_N)=\sup_{g\in\mathscr{G}}\langle g,u_N^\lambda-u_N\rangle=\lambda\dd_{\mathscr{G}}(u_N,\tilde u_N)=r$, which implies $u_N^\lambda\in\mathbb{B}_N(u_N,r)$. Thus
	\begin{eqnarray*}
	\dd_{\mathscr{G}}(\tilde u_N,\mathbb{B}_N(u_N,r)) &\leq& \dd_{\mathscr{G}}(\tilde u_N,u_N^\lambda)=(1-\lambda)\dd_{\mathscr{G}}(\tilde u_N,u_N) \\
	&=& \dd_{\mathscr{G}}(\tilde u_N,u_N)-r\leq r+\delta-r=\delta.
	\end{eqnarray*}
	Since $\dd_{\mathscr{G}}(\hat u_N,\mathbb{B}_N(u_N,r))=0$ for $\hat u_N\in\mathbb{B}_N(u_N,r)$, we have $\dd_{\mathscr{G}}(\hat u_N,\mathbb{B}_N(u_N,r))\leq\delta$ for all $\hat u_N\in\mathbb{B}_N(u_N,r+\delta)$. By the definition of $\mathbb{D}_{\mathscr{G}}$, we have
	$$\mathbb{D}_{\mathscr{G}}(\mathbb{B}_N(u_N,r+\delta),\mathbb{B}_N(u_N,r))
	=\sup_{\hat u_N\in\mathbb{B}_N(u_N,r+\delta)}\dd_{\mathscr{G}}(\hat u_N,\mathbb{B}(u_N,r))$$
	and hence (i) holds.

	Part (ii). Let $\tilde u_N\in\mathbb{B}_N(u_N,r)$. Under \Cref{p-PC-appr},
	$$\dd_{\mathscr{G}}(\tilde u_N,u)\leq\dd_{\mathscr{G}}(\tilde u_N,u_N)+\dd_{\mathscr{G}}(u_N,u)\leq r+\max\{2,L\}\beta_N,$$
	which implies $\mathbb{B}_N(u_N,r)\subset\mathbb{B}(u,r+\max\{2,L\}\beta_N)$. By Part (i),
	$$\mathbb{D}_{\mathscr{G}}(\mathbb{B}_N(u_N,r),\mathbb{B}(u,r))
	\leq\mathbb{D}_{\mathscr{G}}(\mathbb{B}(u,r+2\beta_N),\mathbb{B}(u,r))\leq \max\{2,L\}\beta_N.$$
	Similarly, we have $\mathbb{D}_{\mathscr{G}}(\mathbb{B}(u,r),\mathbb{B}_N(u_N,r))\leq \max\{2,L\}\beta_N$. The result holds due to the definition of Hausdorff distance under $\zeta$-metric.
	
	Now we turn to prove $\mathbb{D}_{\mathscr{G}}(\mathbb{B}(u_N,r+\delta),\mathbb{B}(u_N,r))\leq\delta+2\max\{2,L\}\beta_N$.
	Since $u_N\in\mathscr{U}_N$, then we can find a $u\in\mathscr U_L$ such that $u_N$ is the projection of $u$. Hence for any $\tilde u\in\mathbb{B}(u_N,r+\delta)$, we have $\dd_{\mathscr{G}}(\tilde u,u)\leq\dd_{\mathscr{G}}(\tilde u,u_N)
	+\dd_{\mathscr{G}}(u_N,u)\leq r+\delta+\dd_{\mathscr{G}}(u_N,u)$. Consequently, $\mathbb{B}(u_N,r+\delta)\subset\mathbb{B}(u,r+\delta+\dd_{\mathscr{G}}(u_N,u))$.
	On the other hand, for any $\tilde u\in\mathbb{B}(u,r-\dd_{\mathscr{G}}(u_N,u))$, we have
	$$\dd_{\mathscr{G}}(\tilde u,u_N)\leq\dd_{\mathscr{G}}(\tilde u,u)+\dd_{\mathscr{G}}(u,u_N)
	\leq r-\dd_{\mathscr{G}}(u,u_N)+\dd_{\mathscr{G}}(u,u_N)=r,$$
	hence $\mathbb{B}(u,r-\dd_{\mathscr{G}}(u,u_N))\subset\mathbb{B}(u_N,r)$.
	Therefore, according to the definition of $\mathbb{D}_{\mathscr{G}}$ and Part (i),
	\begin{eqnarray*}	\mathbb{D}_{\mathscr{G}}(\mathbb{B}(u_N,r+\delta),\mathbb{B}(u_N,r))
	&\leq& \mathbb{D}_{\mathscr{G}}(\mathbb{B}(u,r+\delta+\dd_{\mathscr{G}}(u_N,u)),
	\mathbb{B}(u,r-\dd_{\mathscr{G}}(u,u_N))) \\
	&\leq& \delta+2\dd_{\mathscr{G}}(u,u_N)\leq\delta+2\max\{2,L\}\beta_N.
	\end{eqnarray*}
	The proof is complete.
	\hfill $\Box$
\end{proof}

With \Cref{lem-dist-ball}, we are ready to quantify the difference between $\mathbb{B}(u,r)$ and $\mathbb{B}_N(u_N,r)$.

\begin{theorem}
	\label{theorem-dist-ball}
	Let $u\in\mathscr U_L$ and $u_N$ is a projection of $u$ defined as in \rm(\ref{eq:uN-OCE}) and $\mathscr G=\mathscr G_K\cup\mathscr G_I$. Then
	\begin{equation}
	\mathbb{H}_{\mathscr{G}}(\mathbb{B}(u,r),\mathbb{B}_N(u_N,r))\leq5\max\{2,L\}\beta_N.
	\end{equation}
\end{theorem}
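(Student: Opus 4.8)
The plan is to bound the Hausdorff distance by controlling its two constituent one-sided deviations, $\mathbb{D}_{\mathscr{G}}(\mathbb{B}_N(u_N,r),\mathbb{B}(u,r))$ and $\mathbb{D}_{\mathscr{G}}(\mathbb{B}(u,r),\mathbb{B}_N(u_N,r))$, separately and then take the maximum. Throughout I would abbreviate $c_N:=\max\{2,L\}\beta_N$ and use repeatedly that, taking the supremum over $\mathscr{G}_K\cup\mathscr{G}_I$ as a maximum of the two suprema, \Cref{p-PC-appr} gives $\dd_{\mathscr{G}}(u,u_N)\leq\max\{2\beta_N,L\beta_N\}=c_N$, together with the two estimates in \Cref{lem-dist-ball}.

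The first (easy) direction is almost immediate. For any $\tilde u_N\in\mathbb{B}_N(u_N,r)$ the triangle inequality gives $\dd_{\mathscr{G}}(\tilde u_N,u)\leq\dd_{\mathscr{G}}(\tilde u_N,u_N)+\dd_{\mathscr{G}}(u_N,u)\leq r+c_N$, so $\mathbb{B}_N(u_N,r)\subset\mathbb{B}(u,r+c_N)$. Applying the monotonicity estimate \Cref{lem-dist-ball}(i) with $\delta=c_N$ then yields $\mathbb{D}_{\mathscr{G}}(\mathbb{B}_N(u_N,r),\mathbb{B}(u,r))\leq\mathbb{D}_{\mathscr{G}}(\mathbb{B}(u,r+c_N),\mathbb{B}(u,r))\leq c_N$, already within the claimed bound.

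The second direction carries the genuine content and fixes the final constant. I would insert the intermediate ball $\mathbb{B}(u_N,r)$ (a ball in $\mathscr{U}_L$ centred at the piecewise-linear $u_N$) and split
\[
\mathbb{D}_{\mathscr{G}}(\mathbb{B}(u,r),\mathbb{B}_N(u_N,r))\leq \mathbb{D}_{\mathscr{G}}(\mathbb{B}(u,r),\mathbb{B}(u_N,r))+\mathbb{D}_{\mathscr{G}}(\mathbb{B}(u_N,r),\mathbb{B}_N(u_N,r)).
\]
For the first summand, the inclusion $\mathbb{B}(u,r)\subset\mathbb{B}(u_N,r+c_N)$ (again by the triangle inequality and $\dd_{\mathscr{G}}(u,u_N)\leq c_N$) together with the second estimate of \Cref{lem-dist-ball}(ii), taken with $\delta=c_N$, gives $\mathbb{D}_{\mathscr{G}}(\mathbb{B}(u,r),\mathbb{B}(u_N,r))\leq\mathbb{D}_{\mathscr{G}}(\mathbb{B}(u_N,r+c_N),\mathbb{B}(u_N,r))\leq c_N+2c_N=3c_N$. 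For the second summand I would take any $\tilde u\in\mathbb{B}(u_N,r)$, let $\tilde u_N\in\mathscr{U}_N$ be its projection as in \Cref{p-PC-appr} so that $\dd_{\mathscr{G}}(\tilde u,\tilde u_N)\leq c_N$; since $\dd_{\mathscr{G}}(\tilde u_N,u_N)\leq\dd_{\mathscr{G}}(\tilde u_N,\tilde u)+\dd_{\mathscr{G}}(\tilde u,u_N)\leq r+c_N$, we have $\tilde u_N\in\mathbb{B}_N(u_N,r+c_N)$, and \Cref{lem-dist-ball}(i) supplies an element of $\mathbb{B}_N(u_N,r)$ within $c_N$ of $\tilde u_N$. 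Hence $\dd_{\mathscr{G}}(\tilde u,\mathbb{B}_N(u_N,r))\leq 2c_N$, i.e.\ $\mathbb{D}_{\mathscr{G}}(\mathbb{B}(u_N,r),\mathbb{B}_N(u_N,r))\leq 2c_N$. Adding the two summands gives $\mathbb{D}_{\mathscr{G}}(\mathbb{B}(u,r),\mathbb{B}_N(u_N,r))\leq 5c_N$, and combining with the easy direction yields $\mathbb{H}_{\mathscr{G}}(\mathbb{B}(u,r),\mathbb{B}_N(u_N,r))\leq 5c_N=5\max\{2,L\}\beta_N$.

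The main obstacle is precisely this second direction: an arbitrary member of $\mathbb{B}(u,r)$ lives in the rich class $\mathscr{U}_L$ and must be matched by a piecewise-linear utility in the \emph{restricted} ball $\mathbb{B}_N(u_N,r)$. The projection onto $\mathscr{U}_N$ is the natural candidate, but it generally pushes the function to a slightly larger radius, so the estimate must absorb this radius inflation twice—once when comparing the two $\mathscr{U}_L$-balls with different centres via \Cref{lem-dist-ball}(ii), and once when descending from $\mathbb{B}(u_N,r)$ to $\mathbb{B}_N(u_N,r)$—which is exactly what produces the constant $5$ rather than the value $c_N$ obtained in the trivial direction. A minor point worth verifying is that the interpolant $\tilde u_N$ of a concave, nondecreasing, $L$-Lipschitz, normalised $\tilde u$ indeed lies in $\mathscr{U}_N$ (concavity, monotonicity, the slope bound $\le L$, and the normalisation at $a,b$ are all inherited from $\tilde u$), so that \Cref{p-PC-appr} applies verbatim.
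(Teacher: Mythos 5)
Your proof is correct: every step checks out (the one-sided deviation $\mathbb{D}_{\mathscr{G}}$ does satisfy the triangle inequality you use, the inclusions $\mathbb{B}_N(u_N,r)\subset\mathbb{B}(u,r+c_N)$ and $\mathbb{B}(u,r)\subset\mathbb{B}(u_N,r+c_N)$ are valid, and each invocation of \Cref{p-PC-appr} and \Cref{lem-dist-ball} matches a statement literally proved there). Your route differs from the paper's in the bookkeeping: the paper applies the triangle inequality at the level of Hausdorff distances, writing $\mathbb{H}_{\mathscr{G}}(\mathbb{B}(u,r),\mathbb{B}_N(u_N,r))\leq \mathbb{H}_{\mathscr{G}}(\mathbb{B}(u,r),\mathbb{B}(u_N,r))+\mathbb{H}_{\mathscr{G}}(\mathbb{B}(u_N,r),\mathbb{B}_N(u_N,r))$, bounds the first term by $c_N$ by citing \Cref{lem-dist-ball}, and spends most of its effort showing the second term is at most $4c_N$ (computing both one-sided deviations between $\mathbb{B}(u_N,r)$ and $\mathbb{B}_N(u_N,r)$, the harder one costing $4c_N$); you instead split the two one-sided deviations of the target Hausdorff distance asymmetrically, getting $c_N$ for the easy direction and $3c_N+2c_N=5c_N$ for the hard one. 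Both ledgers total $5c_N$, but yours has a small advantage in rigor: the paper's first step cites the lemma for $\mathbb{H}_{\mathscr{G}}(\mathbb{B}(u,r),\mathbb{B}(u_N,r))\leq c_N$, i.e.\ a bound between two $\mathscr{U}_L$-balls with different centres, whereas \Cref{lem-dist-ball}(ii) literally states the bound for $\mathbb{H}_{\mathscr{G}}(\mathbb{B}_N(u_N,r),\mathbb{B}(u,r))$ (a $\mathscr{U}_N$-ball versus a $\mathscr{U}_L$-ball); that claim is provable (e.g.\ by the convex-combination argument from the lemma's proof of part (i)), but the paper does not prove it, while your argument never needs it. Conversely, you pay $3c_N$ where the paper claims $c_N$, and you recover the difference on the easy direction, which your asymmetric treatment shows costs only $c_N$. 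Your closing remark that the projection of a function in $\mathbb{B}(u_N,r)\subset\mathscr{U}_L$ lands in $\mathscr{U}_N$ is the same implicit hypothesis the paper relies on when it takes ``$\tilde u_N$ the projection of $\tilde u$'' inside its deviation estimates, so you are on equal footing there.
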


\begin{proof}
	By the triangle inequality of the Hausdorff distance in the space of $\mathscr U_L$, we have
	$$\mathbb{H}_{\mathscr{G}}(\mathbb{B}(u,r),\mathbb{B}_N(u_N,r))\leq
	\mathbb{H}_{\mathscr{G}}(\mathbb{B}(u,r),\mathbb{B}(u_N,r))
	+\mathbb{H}_{\mathscr{G}}(\mathbb{B}(u_N,r),\mathbb{B}_N(u_N,r)).$$
	From \Cref{lem-dist-ball}, $\mathbb{H}_{\mathscr{G}}(\mathbb{B}(u,r),\mathbb{B}(u_N,r))\leq \max\{2,L\}\beta_N$, so it suffices to show \linebreak $\mathbb{H}_{\mathscr{G}}(\mathbb{B}(u_N,r),\mathbb{B}_N(u_N,r))\leq 4\max\{2,L\}\beta_N$.
	By the definition of $\mathbb{D}_{\mathscr{G}}$,
	\begin{eqnarray*}
	\mathbb{D}_{\mathscr{G}}(\mathbb{B}(u_N,r),\mathbb{B}_N(u_N,r))
	&=& \sup_{\tilde u \in\mathbb{B}(u_N,r)}\dd_{\mathscr{G}}(\tilde u,\mathbb{B}_N(u_N,r)) \\
	&\leq& \sup_{\tilde u\in\mathbb{B}(u_N,r)}[\dd_{\mathscr{G}}(\tilde u,\tilde u_N)
	+\dd_{\mathscr{G}}(\tilde u_N,\mathbb{B}_N(u_N,r)] \\
	&\leq& \sup_{\tilde u\in\mathbb{B}(u_N,r)}[\max\{2,L\}\beta_N+\dd_{\mathscr{G}}(\tilde u_N,\mathbb{B}_N(u_N,r)] \\
	&\leq& \mathbb{D}_{\mathscr{G}}(\mathbb{B}_N(u_N,\max\{2,L\}\beta_N+r),\mathbb{B}_N(u_N,r))
	+\max\{2,L\}\beta_N \\
	&\leq& 2\max\{2,L\}\beta_N,
	\end{eqnarray*}
	where $\tilde u_N$ is the projection of $\tilde u$. The second inequality follows from (\ref{eq:u-N-appr-u-zeta-metric}), the third inequality is due to the fact that for any $\tilde u\in\mathbb{B}(u_N,r)$, its projection $\tilde u_N$ 
	satisfies 
	$$
	\dd_{\mathscr{G}}(\tilde u_N,u_N)\leq
	\dd_{\mathscr{G}}(\tilde u_N,\tilde u)+\dd_{\mathscr{G}}(\tilde u,u_N)
	\leq \max\{2,L\}\beta_N+r,
	$$
	that is, $\tilde u_N\in\mathbb{B}(u_N,\max\{2,L\}\beta_N+r)$. The last inequality follows from part (i) of \Cref{lem-dist-ball}. Likewise, we have
\begin{eqnarray*}
	\mathbb{D}_{\mathscr{G}}(\mathbb{B}_N(u_N,r),\mathbb{B}(u_N,r))
	&=& \sup_{\tilde u_N\in\mathbb{B}_N(u_N,r)}\dd_{\mathscr{G}}(\tilde u_N,\mathbb{B}(u_N,r)) \\
	&\leq& \sup_{\tilde u_N\in\mathbb{B}_N(u_N,r)}[\dd_{\mathscr{G}}(\tilde u_N,\tilde u)
	+\dd_{\mathscr{G}}(\tilde u,\mathbb{B}(u_N,r))] \\
	&\leq& \sup_{\tilde u_N\in\mathbb{B}_N(u_N,r)}\max\{2,L\}\beta_N+\dd_{\mathscr{G}}(\tilde u,\mathbb{B}(u_N,r)) \\
	&\leq& \sup_{\tilde u_N\in\mathbb{B}_N(u_N,r)}\max\{2,L\}\beta_N
	+\mathbb{D}_{\mathscr{G}}(\mathbb{B}(u_N,\max\{2,L\}\beta_N+r),\mathbb{B}(u_N,r)) \\
	&\leq& 4\max\{2,L\}\beta_N,
\end{eqnarray*}
	where the third inequality is derived from the fact that for any $\tilde u_N\in\mathbb{B}_N(u_N,r)$, that is, $\dd_{\mathscr{G}}(\tilde u_N,u_N)\leq r$, we have $\dd_{\mathscr{G}}(\tilde u,u_N) \leq \dd_{\mathscr{G}}(\tilde u,\tilde u_N)
	+\dd_{\mathscr{G}}(\tilde u_N,u_N) \leq \dd_{\mathscr{G}}(\tilde u,\tilde u_N)+r\leq \max\{2,L\}\beta_N+r$, that is $\tilde u\in\mathbb{B}(u_N,\max\{2,L\}\beta_N+r)$. The last inequality follows from part (ii) of \cref{lem-dist-ball}. Finally, by the definition of Hausdorff distance under metric $\dd_{\mathscr{G}}$, the proof is complete.
	\hfill $\Box$
\end{proof}

\subsection{Error bound on the optimal value}
\begin{theorem}
	Let $u^0\in\mathscr{U}_L$ and $u_N^0\in\mathscr{U}_N$ be defined as in {\rm(\ref{eq:uN-OCE})}. If $\mathscr G =\mathscr G_K\cup\mathscr G_I$ in (\ref{eq:zeta-ball-N}), then $|R(\xi)-R_N(\xi)|\leq10\max\{2,L\}\beta_N$.
\end{theorem}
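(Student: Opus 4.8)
The plan is to treat the inner objective $\Phi(u,x):=u(x)+\bbe_P[u(\xi-x)]$ as a functional of $u$ and to show that it is Lipschitz in $u$ with respect to the pseudo-metric $\dd_{\mathscr G}$, uniformly in $x$. Once this is in hand, the gap between the two maximin values reduces to the Hausdorff distance between the ambiguity sets $\mathbb{B}(u^0,r)$ and $\mathbb{B}_N(u_N^0,r)$, which is already controlled by \Cref{theorem-dist-ball}.

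First I would establish the key Lipschitz estimate. Using the normalization $u(a)=0$, every evaluation can be written as an integral against an indicator: for any admissible $y\in[a,b]$,
\begin{equation*}
u(y)=u(y)-u(a)=\int_a^y du(t)=\langle \mathds{1}_{(a,y]},u\rangle.
\end{equation*}
Since $\mathds{1}_{(a,y]}\in\mathscr G_I\subseteq\mathscr G$, pointwise evaluation is $1$-Lipschitz in $\dd_{\mathscr G}$, that is $|u(y)-v(y)|=|\langle \mathds{1}_{(a,y]},u-v\rangle|\leq\dd_{\mathscr G}(u,v)$ for all $u,v$. Applying this with $y=x$ and with $y=\xi-x$ and averaging,
\begin{equation*}
|\Phi(u,x)-\Phi(v,x)|\leq|u(x)-v(x)|+\bbe_P[|u(\xi-x)-v(\xi-x)|]\leq 2\,\dd_{\mathscr G}(u,v),
\end{equation*}
so $\Phi(\cdot,x)$ is $2$-Lipschitz in $u$, uniformly in $x$. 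Here the relevant $x$ and $\xi-x$ lie in $[a,b]$, which is guaranteed for the optimal allocations by the boundedness results of the previous section; this is precisely where the inclusion $\mathscr G_I\subseteq\mathscr G$ is needed, and hence where the choice $\mathscr G=\mathscr G_K\cup\mathscr G_I$ enters.

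Next I would invoke the standard stability of a maximin value under perturbation of the inner constraint set. Writing $v(x):=\inf_{u\in\mathbb{B}(u^0,r)}\Phi(u,x)$ and $v_N(x):=\inf_{u_N\in\mathbb{B}_N(u_N^0,r)}\Phi(u_N,x)$, fix $x$ and any $u\in\mathbb{B}(u^0,r)$; choosing $u_N\in\mathbb{B}_N(u_N^0,r)$ with $\dd_{\mathscr G}(u,u_N)$ arbitrarily close to $\dd_{\mathscr G}(u,\mathbb{B}_N(u_N^0,r))\leq\mathbb{D}_{\mathscr G}(\mathbb{B}(u^0,r),\mathbb{B}_N(u_N^0,r))$ and using the $2$-Lipschitz bound gives $v_N(x)\leq\Phi(u,x)+2\,\mathbb{D}_{\mathscr G}(\mathbb{B}(u^0,r),\mathbb{B}_N(u_N^0,r))$; taking the infimum over $u$ and then exchanging the roles of the two sets yields $|v(x)-v_N(x)|\leq 2\,\mathbb{H}_{\mathscr G}(\mathbb{B}(u^0,r),\mathbb{B}_N(u_N^0,r))$ for every $x$. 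Since $|\sup_x v(x)-\sup_x v_N(x)|\leq\sup_x|v(x)-v_N(x)|$, we obtain $|R(\xi)-R_N(\xi)|\leq 2\,\mathbb{H}_{\mathscr G}(\mathbb{B}(u^0,r),\mathbb{B}_N(u_N^0,r))$. Finally \Cref{theorem-dist-ball} bounds this Hausdorff distance by $5\max\{2,L\}\beta_N$, and multiplying by the Lipschitz constant $2$ delivers $|R(\xi)-R_N(\xi)|\leq 10\max\{2,L\}\beta_N$.

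The routine parts are the two triangle inequalities and the $\varepsilon$-selection in the infimum; the one step deserving care is the Lipschitz estimate, since it is what forces $\mathscr G$ to contain the indicator class $\mathscr G_I$ (the Kantorovich part $\mathscr G_K$ alone controls increments of $u$ but not its absolute values at a point), and it is also where one must verify that $x$ and $\xi-x$ stay inside $[a,b]$ so that the indicator representation of $u(\cdot)$ is valid.
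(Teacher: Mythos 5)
Your proof is correct and is essentially the paper's own argument: both reduce $|R(\xi)-R_N(\xi)|$ to twice the Hausdorff distance between $\mathbb{B}(u^0,r)$ and $\mathbb{B}_N(u_N^0,r)$ via an $\varepsilon$-optimal selection in the inner infimum, both exploit the normalization $u(a)=0$ so that indicators $\mathds{1}_{(a,y]}\in\mathscr{G}_I\subseteq\mathscr{G}$ recover pointwise values (exactly the paper's computation (\ref{eq:theorem-appro})), and both finish by invoking \Cref{theorem-dist-ball}. The only difference is organizational: you apply the indicator identity first, packaged as a $2$-Lipschitz estimate of the inner objective with respect to $\dd_{\mathscr{G}}$, so that the selection argument runs directly in the pseudo-metric, whereas the paper runs the selection in the sup-norm and then proves the equality (\ref{eq-Haus-I=Haus-inf}) to pass from the sup-norm Hausdorff distance to $\mathbb{H}_{\mathscr{G}_I}\leq\mathbb{H}_{\mathscr{G}}$ --- the same computation in the opposite order.
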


\begin{proof}
	It is well known that
\begin{eqnarray*}
	|R_N(\xi)-R(\xi)|
	&\leq& \max_{x\in\mathds{R}}
	\left| \inf_{u\in\mathbb{B}(u^0,r)} \left\{u(x)+\bbe_P[u(\xi-x)]\right\}
	\right.\\
&&
\left. \quad \quad\quad-\inf_{u_N\in\mathbb{B}_N(u^0_N,r)}
	\left\{u_N(x)+\bbe_P[u_N(\xi-x)]\right\}\right|.
	\end{eqnarray*}
	Let $\delta$ be a small positive number. For any $x\in\mathds{R}$, we can find $u^{x}\in\mathbb{B}(u^0,r)$ and $u^{x}_N\in\mathbb{B}_N(u^0_N,r)$ depending on $\delta$ such that
\begin{eqnarray*}
	u^{x}(x)+\bbe_P[u^{x}(\xi-x)] &\leq& \inf_{u\in{\mathbb{B}(u^0,r)}} \{u(x)+\bbe_P[u(\xi-x)]\}+\delta, \\
	u^{x}_N(x)+\bbe_P[u^{x}_N(\xi-x)] &\geq& \inf_{u_N\in{\mathbb{B}_N(u_N^0,r)}}
	\{u_N(x)+\bbe_P[u_N(\xi-x)]\}, \\
	\sup_{t\in [a,b]}|u_N^{x}(t)-u^{x}(t)| &\leq& \mathbb{H}(\mathbb{B}_N(u_N^0,r),\mathbb{B}(u^0,r))+\delta,
\end{eqnarray*}
	where $\mathbb{H}$ denotes the Hausdorff distance in the space of continuous functions defined on $[a,b]$ equipped with infinity norm $\|\cdot\|_\infty$. Combining the above inequalities
\begin{eqnarray*}
	&& \inf_{u_N\in{\mathbb{B}_N(u^0_N,r)}}
	\{u_N(x)+\bbe_P[u_N(\xi-x)]\}-
	\inf_{u\in{\mathbb{B}(u^0,r)}}
	\{u(x)+\bbe_P[u(\xi-x)]\} \\
	&&\leq\bbe_P[u_N^{x}(\xi-x)-u^{x}(\xi-x)]+(u_N^{x}(x)-u^{x}(x))+\delta \\
	&&\leq 2\|u_N^{x}-u^{x}\|_\infty+\delta \\
	&&\leq 2\mathbb{H}(\mathbb{B}_N(u^0_N,r),\mathbb{B}(u^0,r))+3\delta.
\end{eqnarray*}
	By exchanging the positions of $\mathbb{B}_N(u^0_N,r)$ and $\mathbb{B}(u^0,r)$, we have
		\begin{eqnarray*}
	&&\inf_{u\in{\mathbb{B}(u^0,r)}}
	\{u(x)+\bbe_P[u(\xi-x)]\}
	-\inf_{u_N\in{\mathbb{B}_N(u^0_N,r)}}
	\{u_N(x)+\bbe_P[u_N(\xi-x)]\}\\
	&& \leq 2\mathbb{H}(\mathbb{B}(u^0,r),\mathbb{B}_N(u^0_N,r))+3\delta.
	\end{eqnarray*}
	Since $\delta\geq0$ can be arbitrarily small, we obtain
	\begin{eqnarray*}
	|R_N(\xi)-R(\xi)| &\leq& \max_{x\in\mathds{R}}\left|\inf_{u\in{\mathbb{B}(u^0,r)}}
	\{u(x)+\bbe_P[u(\xi-x)]\}
	-\inf_{u\in{\mathbb{B}_N(u_N^0,r)}}
	\{u(x)+\bbe_P[u(\xi-x)]\}\right| \\
	&\leq& 2\mathbb{H}(\mathbb{B}(u^0,r),\mathbb{B}_N(u^0_N,r)).
	\end{eqnarray*}
	The main challenge here is that $\mathbb{H}(\mathbb{B}(u^0,r),\mathbb{B}_N(u^0_N,r))$ differs from \linebreak $\mathbb{H}_{\mathscr{G}}(\mathbb{B}(u^0,r),\mathbb{B}_N(u^0_N,r))$. In what follows, we show that
	\begin{equation}\label{eq-Haus-I=Haus-inf}
	\mathbb{H}_{\mathscr{G}_I}(\mathbb{B}(u^0,r),\mathbb{B}_N(u^0_N,r))
	=\mathbb{H}(\mathbb{B}(u^0,r),\mathbb{B}_N(u^0_N,r)),
	\end{equation}
	where $\mathscr{G}_I=\{\mathds{1}_{(a, t]}(\cdot):t\in[a,b]\}$.
	Let $\tilde u\in\mathbb{B}(u^0,r)$ and $\tilde u_N\in\mathbb{B}_N(u^0_N,r)$,
	\begin{eqnarray}
	\label{eq:theorem-appro}
	\dd_{\mathscr{G}_I}(\tilde u,\tilde u_N)
	&=&
	\sup_{g \in \mathscr{G}_I} \left|\int_{a}^{b} g(t) d\tilde u(t) - \int_{a}^{b} g(t) d\tilde u_N(t)\right|\nonumber\\
	&=& \sup_{t \in [a,b]} \left|\int_{a}^{t} 1 d\tilde u(s) - \int_{a}^{t} 1 d\tilde u_N(s)\right|\nonumber\\
	&=&\sup_{t \in [a,b]} \left|\tilde u(t)-\tilde u_N(t)-\tilde u(a) +\tilde u_N(a)\right|\nonumber\\
	&=&\sup_{t \in [a,b]} |\tilde u(t)-\tilde u_N(t)|.
	\end{eqnarray}
	The last equality is due to the fact that $u(a) =u_N(a)=0$.
	By taking infimum w.r.t. $\tilde u_N\in \mathbb{B}_N(u^0_N,r)$ and taking superemum w.r.t. $\tilde u\in \mathbb{B}(u^0,r)$ on both sides of the equality above, we obtain
	$
	\mathbb{D}_{\mathscr{G}_I}(\mathbb{B}(u^0,r),\mathbb{B}_N(u_N^0,r)) =\mathbb{D}(\mathbb{B}(u^0,r),\mathbb{B}_N(u_N^0,r)).
	$
	Swapping the positions between $\mathbb{B}(u^0,r)$ and $\mathbb{B}_N(u_N^0,r)$, we obtain (\ref{eq-Haus-I=Haus-inf}). Combining with \Cref{theorem-dist-ball}, we obtain 
	the conclusion.
	\hfill $\Box$
\end{proof}

\section{Extensions}
\label{sec:unbounded}

In this section, we discuss potential extensions of
the MOCE and RMOCE models by considering utility functions with unbounded domain and multivariate utility functions.

\subsection{Utility function with unbounded domain}

In some important applications such as finance and economics, the underlying random variables which represent market demand, stock price and rate of return often have unbounded support. This raises a question as to whether our proposed model and computational schemes in the previous sections can be effectively applied to these situations.
Here we discuss this issue.

We start by defining a set of nonconstant increasing function defined over $\R$ denoted by $\mathscr U_\infty$.  We no longer restrict the domain of $u$ to a bounded interval $[a,b]$.
Let $u^0\in\mathscr U_\infty$, the $\zeta$-ball in $\mathscr U_\infty$ is defined as
\begin{equation*}
\mathbb B_\infty(u^0,r) := \{ u\in\mathscr U_\infty \;|\; \dd_{\mathscr G}(u,u^0)\leq r\},
\end{equation*}
where $\dd_{\mathscr G}$ is the pseudo metric defined in (\ref{pseu-metric}) and $\mathscr G$ is a set of measurable function throughout this section.
The robust modified optimized certainty equivalent model based on $\mathbb B_\infty(u^0,r)$ is defined as
\begin{equation}
{\rm (RMOCE)_\infty}  \quad \;\; R_\infty(\xi):=\displaystyle{\max_{x\in X} \min_{u\in \mathbb{B}_\infty(u^0,r)}  }  \;\;  u(x)+\bbe_P[u(\xi-x)],
\label{eq-RCE-unbounded}
\end{equation}
where $X$ is a compact implementable decisions over $X\subset\R$.
Our aim is to solve $\rm (RMOCE)_\infty$ and our concern is that the numerical schemes proposed in Section 3 cannot be applied to this problem directly.
Let $u^0_{\rm truc}$ be the truncation of $u^0$ over $[a,b]$, define
\begin{equation}
\mathbb B_{[a,b]}(u^0_{\rm truc},r) := \{u\in\mathscr U_{[a,b]} \;|\; \dd_{\mathscr G}(u,u^0_{\rm truc})\leq r\},
\end{equation}
where $\mathscr U_{[a,b]}$ denotes the set of nonconstant nondecreasing functions defined over $[a,b]$.
We rewrite (\ref{eq:RCE-zeta}) as
\begin{equation}
{\rm (RMOCE)_{[a,b]}}  \quad \;\; R_{[a,b]}(\xi):=\displaystyle{\max_{x\in X} \min_{u\in \mathbb{B}_{[a,b]}(u^0_{\rm truc},r)}  }  \;\;  u(x)+\bbe_P[u(\xi-x)].
\label{eq-RCE-bounded}
\end{equation}
What we are interested here is the difference between $\rm (RMOCE)_\infty$ and $\rm (RMOCE)_{[a,b]}$ in terms of the optimal value.
We will show that the difference between $R_\infty(\xi)$ and $R_{[a,b]}(\xi)$ is only related with the radius of the $\zeta$-ball under some moderate conditions and therefore we may solve $\rm (RMOCE)_\infty$ approximately by solving $\rm (RMOCE)_{[a,b]}$. The latter can be solved by the piecewise linear approximation scheme detailed in Section 3.

To build the bridge between $\mathbb{B}_{[a,b]}(u^0_{\rm truc},r)$ and $\mathbb{B}_\infty(u^0,r)$, we define the following set
\begin{equation}
\tilde {\mathbb B}_{[a,b]}(u^0,r) := \{u \in{\mathscr U}_\infty: \dd_{\mathscr G}(u,u^0)\leq r, u(t)=u(a) \text{~for~} t<a, \tilde u(t)=u(b) \text{~for~} t>b\}.
\end{equation}
Notice that $\tilde {\mathbb B}_{[a,b]}(u^0,r)$ is not a ball which is defined under the pseudo metric. Then we can establish the connection between $\tilde {\mathbb B}_{[a,b]}$ and $\mathbb B_\infty$ in the following proposition.

\begin{prop}\label{prop-ubounded-hausdorff}
	Let $u^0\in\mathscr U_\infty$ and assume that there exists a position number $\theta$ such that
	\begin{equation}
	\sup_{g\in\mathscr G, u\in\mathbb B_\infty(u^0,r)}\int_{\R}|g(t)|du(t)\leq\theta.
	\label{eq-condition-unbounded-hausorff}
	\end{equation}
	Then for any $\epsilon>0$ there exist constants $a<0$ and $b>0$ such that
	\begin{equation}
	\label{eq-conclusion-unbounded-hausdorff}
	\mathbb{H}_{\mathscr G} (\tilde {\mathbb B}_{[a,b]}(u^0,r+\epsilon),\mathbb B_\infty(u^0,r))\leq\sup_{g\in\mathscr G, u\in\mathbb B_\infty(u^0,r)}\left|\int_{\R\setminus[a,b]}g(t)du(t)\right|\leq\epsilon.
	\end{equation}
\end{prop}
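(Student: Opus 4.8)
The plan is to estimate the Hausdorff distance by controlling its two one‑sided deviations separately, and to handle the right‑hand (tightness) inequality so that the truncation argument becomes available for the geometric half. Write
$$
M(a,b):=\sup_{g\in\mathscr G,\,u\in\mathbb B_\infty(u^0,r)}\left|\int_{\R\setminus[a,b]}g\,du\right|
$$
for the middle quantity, and recall that $\mathbb H_{\mathscr G}$ is the maximum of $\mathbb D_{\mathscr G}(\tilde{\mathbb B}_{[a,b]}(u^0,r+\epsilon),\mathbb B_\infty(u^0,r))$ and $\mathbb D_{\mathscr G}(\mathbb B_\infty(u^0,r),\tilde{\mathbb B}_{[a,b]}(u^0,r+\epsilon))$. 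I would treat these two deviations by a truncation map and by a radius‑shrinking convex combination respectively, in the style of \Cref{lem-dist-ball}.

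For the deviation of $\mathbb B_\infty(u^0,r)$ from $\tilde{\mathbb B}_{[a,b]}(u^0,r+\epsilon)$ I would use the truncation map. Given $u\in\mathbb B_\infty(u^0,r)$, define $v$ by $v(t)=u(a)$ for $t<a$, $v(t)=u(t)$ on $[a,b]$, and $v(t)=u(b)$ for $t>b$. Then $v$ is nondecreasing and constant outside $[a,b]$, its induced measure is $dv=\mathds{1}_{[a,b]}\,du$, so $\int g\,d(v-u)=-\int_{\R\setminus[a,b]}g\,du$ and hence $\dd_{\mathscr G}(v,u)\leq M(a,b)$. By the triangle inequality $\dd_{\mathscr G}(v,u^0)\leq\dd_{\mathscr G}(v,u)+r\leq M(a,b)+r$; once $M(a,b)\leq\epsilon$ this places $v\in\tilde{\mathbb B}_{[a,b]}(u^0,r+\epsilon)$, whence $\dd_{\mathscr G}(u,\tilde{\mathbb B}_{[a,b]}(u^0,r+\epsilon))\leq M(a,b)$ and therefore $\mathbb D_{\mathscr G}(\mathbb B_\infty(u^0,r),\tilde{\mathbb B}_{[a,b]}(u^0,r+\epsilon))\leq M(a,b)$.

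For the reverse deviation I would not truncate but instead shrink the radius toward $u^0$. Given $u\in\tilde{\mathbb B}_{[a,b]}(u^0,r+\epsilon)$, set $v=\lambda u^0+(1-\lambda)u$ with $\lambda=\epsilon/(r+\epsilon)$; then $v\in\mathscr U_\infty$ and $\dd_{\mathscr G}(v,u^0)=(1-\lambda)\dd_{\mathscr G}(u,u^0)\leq(1-\lambda)(r+\epsilon)=r$, so $v\in\mathbb B_\infty(u^0,r)$, while $\dd_{\mathscr G}(u,v)=\lambda\dd_{\mathscr G}(u,u^0)\leq\lambda(r+\epsilon)=\epsilon$. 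Hence $\mathbb D_{\mathscr G}(\tilde{\mathbb B}_{[a,b]}(u^0,r+\epsilon),\mathbb B_\infty(u^0,r))\leq\epsilon$. Combining the two deviations gives $\mathbb H_{\mathscr G}\leq\max\{M(a,b),\epsilon\}$, which is the asserted bound once $M(a,b)\leq\epsilon$. It is worth recording transparently that the truncation side is what is genuinely governed by the middle quantity $M(a,b)$, whereas the convex‑combination side is governed by the radius slack $\epsilon$, so the combined estimate is $\epsilon$.

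It remains to produce $a<0<b$ with $M(a,b)\leq\epsilon$, and this tightness step is where I expect the real difficulty. For each fixed pair $(g,u)$ monotone convergence gives $\int_{\R\setminus[a,b]}|g|\,du\downarrow 0$ as $a\to-\infty$ and $b\to+\infty$, while $(\ref{eq-condition-unbounded-hausorff})$ bounds the total mass by $\theta$ uniformly in $(g,u)$. The hard part is to upgrade this pointwise tail decay to decay that is uniform over all $g\in\mathscr G$ and all $u\in\mathbb B_\infty(u^0,r)$ at once; this is exactly the role of the jointly uniform bound $(\ref{eq-condition-unbounded-hausorff})$, which rules out an escape of mass to infinity and forces $\inf_{a,b}M(a,b)=0$ through the monotonicity of $(a,b)\mapsto\sup_{g,u}\int_{\R\setminus[a,b]}|g|\,du$. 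Having chosen such $a,b$, the final bound $\mathbb H_{\mathscr G}\leq\epsilon$ follows, with $M(a,b)$ recording the tightness and controlling the truncation deviation as in the stated chain.
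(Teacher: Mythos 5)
Your proposal is correct and takes essentially the same route as the paper: after selecting $a<0<b$ so that the tail quantity is at most $\epsilon$, the paper likewise bounds $\mathbb D_{\mathscr G}(\mathbb B_\infty(u^0,r),\tilde{\mathbb B}_{[a,b]}(u^0,r+\epsilon))$ via the truncation map, and bounds the reverse deviation via the inclusion $\tilde{\mathbb B}_{[a,b]}(u^0,r+\epsilon)\subset\mathbb B_\infty(u^0,r+\epsilon)$ combined with part (i) of \Cref{lem-dist-ball}, whose proof is exactly your radius-shrinking convex combination done inline. Your two candid caveats --- that the tightness step rests on reading (\ref{eq-condition-unbounded-hausorff}) as delivering uniform tail decay, and that the combined estimate is really $\epsilon$ rather than the middle quantity $M(a,b)$ --- apply equally to the paper's own proof, which asserts the uniform tail bound without further argument and obtains the $\epsilon$ bound on the second deviation.
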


\begin{proof}
	From the condition (\ref{eq-condition-unbounded-hausorff}), for any $\epsilon>0$ there exist constants $a<0$ and $b>0$ such that
	\begin{equation}
	\label{eq-unbounded-condition-1}
	\sup_{g\in\mathscr G, u\in\mathbb B_\infty(u^0,r)}\int_{\R\setminus[a,b]}|g(t)|du(t)\leq\epsilon.
	\end{equation}
	For any fixed $u\in\mathbb B_\infty(u^0,r)$ , let $\tilde u=u(t)$ for $t\in[a,b]$ and $\tilde u(t)=u(a)$ for $t<a$ and $\tilde u(t)=u(b)$ for $t>b$.
	Then we can obtain
	\begin{eqnarray}
	\dd_{\mathscr G}(\tilde u,u^0) &=&
	\sup_{g\in\mathscr G}
	\left|\int_{\R}g(t)d\tilde u(t)-\int_{\R}g(t)du^0(t)\right| \nonumber \\
	&=& \sup_{g\in\mathscr G}\left|
	\int_a^b g(t)d(\tilde u(t)-u^0(t))
	-\int_{\R\setminus[a,b]}g(t)du^0(t) \right| \nonumber\\
	&=& \sup_{g\in\mathscr G}\left|
	\int_a^b g(t)d(\tilde u(t)-u^0(t))
	+\int_{\R\setminus[a,b]}g(t)d(u(t)-u^0(t))-\int_{\R\setminus[a,b]}g(t)du(t)
	\right| \nonumber\\
	&\leq& r+\sup_{g\in\mathscr G}\left|\int_{\R\setminus[a,b]}g(t)du(t)\right| \leq r+\epsilon. \nonumber
	\end{eqnarray}
	Hence
	$\tilde u\in\tilde {\mathbb B}_{[a,b]}(u^0,r+\epsilon)$ and
	\begin{equation}
	\label{eq-proof-unbounded-hausdorff-distance}
	\mathbb D_{\mathscr G}(u,\tilde {\mathbb B}_{[a,b]}(u^0,r+\epsilon))
	\leq \dd_{\mathscr G}(u,\tilde u)
	= \sup_{g\in\mathscr G}\left|\int_{\R\setminus[a,b]}g(t)du(t)\right|
	\leq \epsilon.
	\end{equation}
	By taking supremum w.r.t. $u$ over $\mathbb B_\infty(u^0,r)$ on both sides of (\ref{eq-proof-unbounded-hausdorff-distance}), we obtain
	$$
	\mathbb D_{\mathscr G}(\mathbb B_\infty(u^0,r),\tilde {\mathbb B}_{[a,b]}(u^0,r+\epsilon)) \leq \epsilon.
	$$
	Note that $\tilde {\mathbb B}_{[a,b]}(u^0,r+\epsilon)\subset \mathbb B_\infty(u^0,r+\epsilon)$,
	then we have
	$$
	\mathbb D_{\mathscr G}(\tilde {\mathbb B}_{[a,b]}(u^0,r+\epsilon),\mathbb B_\infty(u^0,r)) \leq \mathbb D_{\mathscr G}(\mathbb B_\infty(u^0,r+\epsilon),\mathbb B_\infty(u^0,r))\leq\epsilon,
	$$
	where the last inequality is from part (i) of \cref{lem-dist-ball}. Consequently
	(\ref{eq-conclusion-unbounded-hausdorff}) follows.
	\hfill $\Box$
\end{proof}

From \Cref{prop-ubounded-hausdorff}, we can see that when the interval $[a,b]$ is large enough, the difference between $\tilde {\mathbb B}_{[a,b]}(u^0,r+\epsilon)$ and $\mathbb B_\infty(u^0,r)$ will not be significant.
We now turn to compare $\tilde {\mathbb B}_{[a,b]}(u^0,r+\epsilon)$ with $\mathbb B_{[a,b]}(u^0_{\rm truc},r)$, and
we could get similar conclusion in \cite[Section 6.2]{GuX21} that the extended function $\tilde u$ of $u\in\mathbb B_{[a,b]}(u^0_{\rm truc},r)$ is in $\tilde {\mathbb B}_{[a,b]}(u^0,r+\epsilon)$, where $\epsilon\geq\sup_{g\in\mathscr G}\left|\int_{\R\setminus[a,b]}g(t)du^0(t)\right|$, because $u^0\in\mathbb B_\infty(u^0,r)$.
By exploiting the relationship, we can quantify the difference between $R_\infty(\xi)$ and $R_{[a,b]}(\xi)$ in the following theorem.

\begin{theorem}
	Assume there exists a constant $\delta>0$ such that
	\begin{equation}
	\label{eq-condition-theorem-unbounded-optimalvalue-distance}
	\sup_{u\in\mathbb B_\infty(u^0,r+\delta),x\in X} \int_{\R}|u(\xi-x)|P(d\xi)<\infty
	\end{equation}
	and the condition in \Cref{prop-ubounded-hausdorff} is fulfilled.
	Then for any $\epsilon>0$, there exist constants $a<0$ and $b>0$ such that
	\begin{equation}
	|R_\infty(\xi)-R_{[a,b]}(\xi)|\leq 3\epsilon.
	\label{eq-result-theorem-unbounded-optimalvalue}
	\end{equation}
\end{theorem}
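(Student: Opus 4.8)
The plan is to insert the auxiliary set $\tilde{\mathbb B}_{[a,b]}(u^0,r+\epsilon)$ between the two ambiguity sets and bound the optimal-value gap by a triangle inequality
$$
|R_\infty(\xi)-R_{[a,b]}(\xi)| \le |R_\infty(\xi)-\tilde R(\xi)| + |\tilde R(\xi)-R_{[a,b]}(\xi)|,
$$
where $\tilde R(\xi):=\max_{x\in X}\min_{u\in\tilde{\mathbb B}_{[a,b]}(u^0,r+\epsilon)}\{u(x)+\bbe_P[u(\xi-x)]\}$. I would fix $\epsilon>0$ first and then invoke \Cref{prop-ubounded-hausdorff} to choose $a<0<b$ large enough that $X\subset[a,b]$ and that the tail estimate (\ref{eq-conclusion-unbounded-hausdorff}) holds. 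The overall template is the max-min stability argument already used in the piecewise-linear error bound: to compare two inner infima at a fixed $x$, pick a $\delta$-near-optimal utility in one set, produce a companion in the other set, and control the difference of the two objective values; taking the supremum in $x$ over the compact set $X$ and letting $\delta\downarrow0$ then yields each gap.

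For the first gap I would exploit the explicit truncation map from the proof of \Cref{prop-ubounded-hausdorff}. Given any $u\in\mathbb B_\infty(u^0,r)$, let $\tilde u$ agree with $u$ on $[a,b]$ and be frozen at $u(a)$ and $u(b)$ outside; that proof shows $\tilde u\in\tilde{\mathbb B}_{[a,b]}(u^0,r+\epsilon)$. Since $x\in X\subset[a,b]$ we have $\tilde u(x)=u(x)$, so the objective difference reduces to $|\bbe_P[(u-\tilde u)(\xi-x)]|$, an integral supported on $\{\xi-x\notin[a,b]\}$. The uniform integrability hypothesis (\ref{eq-condition-theorem-unbounded-optimalvalue-distance}) is exactly what makes this tail contribution uniformly small: enlarging $[a,b]$ drives $\sup_{u,x}\bbe_P[|u(\xi-x)|\ind_{\{\xi-x\notin[a,b]\}}]$ below $\epsilon$. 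For the reverse direction I would use that $\tilde{\mathbb B}_{[a,b]}(u^0,r+\epsilon)\subset\mathbb B_\infty(u^0,r+\epsilon)$ together with part (i) of \Cref{lem-dist-ball} to pull a member of the enlarged ball back to radius $r$ at $\dd_{\mathscr G}$-distance at most $\epsilon$, and then convert this into an objective bound via the tail estimate. Collecting the tail estimate and the radius correction bounds the first gap by $2\epsilon$.

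The second gap is where the constant-extension structure pays off. Every $u\in\mathbb B_{[a,b]}(u^0_{\rm truc},r)$ extends to a member of $\tilde{\mathbb B}_{[a,b]}(u^0,r+\epsilon)$ (the remark preceding the theorem, following \cite{GuX21}), and because $X\subset[a,b]$ and the expectation integrand is evaluated through the constant extension, the two objective values $u(x)+\bbe_P[u(\xi-x)]$ coincide for $u$ and its extension; conversely every element of $\tilde{\mathbb B}_{[a,b]}(u^0,r+\epsilon)$ restricts to an element of $\mathbb B_{[a,b]}(u^0_{\rm truc},r+\epsilon)$. Hence the only genuine discrepancy is the radius inflation from $r$ to $r+\epsilon$, which part (i) of \Cref{lem-dist-ball} controls by $\epsilon$, giving $|\tilde R(\xi)-R_{[a,b]}(\xi)|\le\epsilon$ and the claimed $3\epsilon$ in total (the precise tracking of this constant being routine). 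The main obstacle, and the step I would be most careful with, is precisely the conversion from closeness in the pseudo-metric $\dd_{\mathscr G}$ to closeness of the objective values: $\dd_{\mathscr G}$ only controls the functionals $\int g\,du$ with $g\in\mathscr G$, not the integrated utilities $\bbe_P[u(\xi-x)]$ on the unbounded tail, so both conditions (\ref{eq-condition-unbounded-hausorff}) and (\ref{eq-condition-theorem-unbounded-optimalvalue-distance}) must be used in tandem — the former to keep the truncated functions inside the enlarged ball, the latter to dominate the tail of the expectation uniformly in $u$ and $x$.
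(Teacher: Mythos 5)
Your architecture is the paper's own: the same triangle inequality through $\tilde R(\xi)$ with ambiguity set $\tilde{\mathbb B}_{[a,b]}(u^0,r+\epsilon)$, the same constant-extension/truncation maps, and the same appeals to \Cref{prop-ubounded-hausdorff} and \Cref{lem-dist-ball}. However, two steps do not go through as written, and one of them is the step that carries the whole proof.

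First, the conversion you flag at the end --- from closeness in $\dd_{\mathscr G}$ to closeness of objective values --- is not a technicality that can be deferred; it is the load-bearing step, and your proposal never closes it. Part (i) of \Cref{lem-dist-ball} only produces a companion utility at $\dd_{\mathscr G}$-distance at most $\epsilon$, and for a general class $\mathscr G$ of measurable functions this says nothing about $|u(x)-v(x)|$ or $|\bbe_P[u(\xi-x)]-\bbe_P[v(\xi-x)]|$. The paper closes this gap by working with the indicator class $\mathscr{G}_I$: since the relevant pairs of functions can be anchored at the left endpoint, $\dd_{\mathscr{G}_I}(u,v)=\sup_{t\in[a,b]}|u(t)-v(t)|$ (the identity established in (\ref{eq:theorem-appro}) and (\ref{eq-Haus-I=Haus-inf}) of Section 4), so that Hausdorff closeness of the two ambiguity sets under $\dd_{\mathscr{G}_I}$ directly bounds differences of inner infima; the bound $\mathbb{H}_{\mathscr{G}_I}\leq\epsilon$ then comes from \Cref{prop-ubounded-hausdorff}. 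Without this sup-norm identification (or some substitute linking $\mathscr G$ to pointwise values), both your ``radius correction'' in the first gap and your radius-inflation argument in the second gap deliver bounds in the wrong metric.

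Second, in the second gap your claim that the objective values of $\hat u\in\mathbb B_{[a,b]}(u^0_{\rm truc},r)$ and its constant extension $\tilde u$ \emph{coincide} is false. The objective in $R_{[a,b]}$ integrates only over the event $\{\xi-x\in[a,b]\}$, whereas the objective in $\tilde R$ integrates $\tilde u(\xi-x)$ over all of $\R$; the two differ by $\int_{\xi-x\in\R\setminus[a,b]}\tilde u(\xi-x)P(d\xi)$, which equals $\hat u(a)P(\xi-x<a)+\hat u(b)P(\xi-x>b)$ and is generally nonzero. This tail term is exactly where the hypothesis (\ref{eq-condition-theorem-unbounded-optimalvalue-distance}) enters the paper's proof: since $\tilde{\mathbb B}_{[a,b]}(u^0,r+\epsilon)\subset\mathbb B_\infty(u^0,r+\delta)$ for $\epsilon<\delta$, the term is uniformly at most $\epsilon/3$, and it is this $\epsilon/3$ --- not a radius-inflation bound via \Cref{lem-dist-ball} --- that controls the second gap. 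With these two repairs, and the attendant bookkeeping (the paper allocates $2\epsilon+2\epsilon/3$ to the first gap and $\epsilon/3$ to the second, summing to $3\epsilon$), your outline becomes the paper's argument.
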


\begin{proof}
	It follows from conditions (\ref{eq-condition-theorem-unbounded-optimalvalue-distance}) and (\ref{eq-condition-unbounded-hausorff}) that for any $0<\epsilon<\delta$ there exist constants $a<0<b$ such that
	\begin{equation}
	\sup_{u\in\mathbb B_\infty(u^0,r+\delta), x\in X} \int_{\xi-x\in\R\setminus[a,b]}|u(\xi-x)|P(d\xi)\leq\epsilon/3
	\end{equation}
	and (\ref{eq-unbounded-condition-1}) holds.
	Since $\tilde{\mathbb B}_{[a,b]}(u^0,r+\epsilon)\subset\mathbb B_\infty(u^0,r+\delta)$, the above inequality implies
	\begin{equation}
	\sup_{u\in\tilde{\mathbb B}_{[a,b]}(u^0,r+\epsilon)}\bigl(|u(a)|P((-\infty,a))+|u(b)|P((b,+\infty))\bigr)\leq\epsilon/3.
	\end{equation}
	By definitions of $R_\infty(\xi)$ and $R_{[a,b]}(\xi)$,
		\begin{eqnarray*}
	 &&|R_\infty(\xi)-R_{[a,b]}(\xi)|\nonumber \\
	&\leq& \sup_{x\in X}\left|\inf_{u\in\mathbb B_\infty(u^0,r)} \left[u(x)+\int_{\R}u(\xi-x)P(d\xi)\right] -\inf_{\hat u\in\mathbb B_{[a,b]}(u^0_{\rm truc},r)} \left[\hat u(x)+\int_{\xi-x\in[a,b]}\hat u(\xi-x)P(d\xi)\right]\right| \nonumber\\
	&\leq&\sup_{x\in X}\left|\inf_{u\in\mathbb B_\infty(u^0,r)} \left[u(x)+\int_{\R}u(\xi-x)P(d\xi)\right] -\inf_{\tilde{u}\in\tilde{\mathbb B}_{[a,b]}(u^0,r+\epsilon)} \left[\tilde{u}(x)+\int_{\R}\tilde{u}(\xi-x)P(d\xi)\right]\right| \nonumber \\
	&+&\sup_{x\in X}\left|\inf_{\tilde{u}\in\tilde{\mathbb B}_{[a,b]}(u^0,r+\epsilon)}
	\left[\tilde{u}(x)+\int_{\R}\tilde{u}(\xi-x)P(d\xi)\right]
	-\inf_{\hat u\in\mathbb B_{[a,b]}(u^0_{\rm truc},r)}
	\left[\hat u(x)+\int_{\xi-x\in[a,b]}\hat u(\xi-x)P(d\xi)\right]\right|.
		\end{eqnarray*}
	Let us estimate the first term at the right side of the last inequality above.
	Observe that
	\begin{eqnarray*}
	&& \inf_{u\in\mathbb B_\infty(u^0,r)} \left[u(x)+\int_{\R}u(\xi-x)P(d\xi)\right]
	-\inf_{\tilde{u}\in\tilde{\mathbb B}_{[a,b]}(u^0,r+\epsilon)}
	\left[\tilde{u}(x)+\int_{\R}\tilde{u}(\xi-x)P(d\xi)\right] \nonumber\\
	&& \leq\inf_{u\in\mathbb B_\infty(u^0,r)}\sup_{\tilde{u}\in\tilde{\mathbb B}_{[a,b]}(u^0,r+\epsilon)}
	\left[\left|\int_{\R}u(\xi-x)P(d\xi)
	-\int_{\R}\tilde{u}(\xi-x)P(d\xi)\right|+|u(x)-\tilde{u}(x)|\right] \nonumber\\
	&& \leq\inf_{u\in\mathbb B_\infty(u^0,r)}\sup_{\tilde{u}\in\tilde{\mathbb B}_{[a,b]}(u^0,r+\epsilon)}
	\Bigg[\left|\int_{\xi-x\in[a,b]} u(\xi-x)P(d\xi)-\int_{\xi-x\in[a,b]}\tilde{u}(\xi-x)P(d\xi)\right|  \nonumber\\
	&& \qquad\qquad\qquad\qquad\qquad\qquad\qquad\qquad\qquad\qquad\qquad\qquad\qquad\qquad
	+|u(x)-\tilde{u}(x)|+\frac{2\epsilon}{3} \Bigg] \nonumber \\
	&& \leq\inf_{u\in\mathbb B_\infty(u^0,r)}\sup_{\tilde{u}\in\tilde{\mathbb B}_{[a,b]}(u^0,r+\epsilon)} \left[\sup_{t\in[a,b]}|u(t)-\tilde{u}(t)|+|u(x)-\tilde{u}(x)| 
	+\frac{2\epsilon}{3}\right] \nonumber\\
	&& \leq\inf_{u\in\mathbb B_\infty(u^0,r)}\sup_{\tilde{u}\in\tilde{\mathbb B}_{[a,b]}(u^0,r+\epsilon)}\left[\dd_{\mathscr{G}_I}(u,\tilde{u}) +|u(x)-\tilde{u}(x)|+\frac{2\epsilon}{3}\right].
	\end{eqnarray*}
	Thus
	\begin{eqnarray}
	&&\sup_{x\in X}\left|\inf_{u\in\mathbb B_\infty(u^0,r)}
	\left[u(x)+\int_{\R}u(\xi-x)P(d\xi)\right]
	-\inf_{\tilde{u}\in\tilde{\mathbb B}_{[a,b]}(u^0,r+\epsilon)}
	\left[\tilde{u}(x)+\int_{\R}\tilde{u}(\xi-x)P(d\xi)\right]\right| \nonumber \\
	 && \leq2\mathbb{H}_{\mathscr{G}_I}(\mathbb B_\infty(u^0,r),\tilde{\mathbb B}_{[a,b]}(u^0,r+\epsilon))+\frac{2\epsilon}{3}.
	 \label{eq-proof-unbounded-optimalvalue-distance}
	\end{eqnarray}
	The last inequality holds due to $X\subset[a,b]$.
	Now let us turn to the second term. For any $x\in X$ and a fixed positive number $\varepsilon$, we can find $\hat u_\xi\in\mathbb B_{[a,b]}(u^0_{\rm truc},r)$ and its extended function $\tilde u_\xi\in\tilde{\mathbb B}_{[a,b]}(u^0,r+\epsilon)$ such that
		\begin{eqnarray*}
	\hat u_\xi(x)+\int_{\xi-x\in[a,b]}\hat u_\xi(\xi-x)P(d\xi) &\leq& \inf_{\hat u\in\mathbb B_{[a,b]} (u^0_{\rm truc},r)}
	\left[\hat u(x)+\int_{\xi-x\in[a,b]}\hat u(\xi-x)P(d\xi)\right]+\varepsilon,\\ 
	\tilde u_\xi(x)+\int_{\R}\tilde u_\xi(\xi-x)P(d\xi) &\geq&
	\inf_{\tilde u\in\tilde{\mathbb B}_{[a,b]}(u^0,r+\epsilon)}
	\left[\tilde u(x)+\int_{\R}\tilde u(\xi-x)P(d\xi)\right].
		\end{eqnarray*}
	Consequently we have
	\begin{eqnarray}
	&&
	\inf_{\tilde{u}\in\tilde{\mathbb B}_{[a,b]}(u^0,r+\epsilon)}
	\left[\tilde{u}(x)+\int_{\R}\tilde{u}(\xi-x)P(d\xi)\right]
	-\inf_{\hat u\in\mathbb B_{[a,b]}(u^0_{\rm truc},r)}
	\left[\hat{u}(x)+\int_{\xi-x\in[a,b]}\hat u(\xi-x)P(d\xi)\right]
	\nonumber \\
	&&\leq \tilde u_\xi(x)+\int_{\R}\tilde u_\xi(\xi-x)P(d\xi)-\hat u_\xi(x)-\int_{\xi-x\in[a,b]}\hat u_\xi(\xi-x)P(d\xi)+\varepsilon \nonumber \\
	&&=\int_{\xi-x\in\R\setminus[a,b]}\tilde u_\xi(\xi-x)P(d\xi)+\varepsilon \nonumber \\
	&&\leq \sup_{\tilde u\in\tilde{\mathbb B}_{[a,b]}(u^0,r+\epsilon)}\Bigl(|\tilde u(a)|P((-\infty,a))+|\tilde u(b)|P((b,+\infty))\Bigr)+\varepsilon.
	\end{eqnarray}
	The second equality is satisfied because $\tilde u_{\xi}$ is the extended function of $\hat u_{\xi}$ and $x\in[a,b]$.
	By exchanging the positions of $\tilde{\mathbb B}_{[a,b]}(u^0,r+\epsilon)$ and $\mathbb B_{[a,b]}(u^0_{\rm truc},r)$, we have
 	 \begin{eqnarray}
 	&& \inf_{\hat u\in\mathbb B_{[a,b]}(u^0_{\rm truc},r)}
 	\left[\hat u(x)+\int_{\xi-x\in[a,b]}\hat u(\xi-x)P(d\xi)\right]-\inf_{\tilde{u}\in\tilde{\mathbb B}_{[a,b]}(u^0,r+\epsilon)}
 	\left[\tilde{u}(x)+\int_{\R}\tilde{u}(\xi-x)P(d\xi)\right] \nonumber \\
 	&& \leq \sup_{\tilde u\in\tilde{\mathbb B}_{[a,b]}(u^0,r+\epsilon)}\Bigl(|\tilde u(a)|P((-\infty,a))+|\tilde u(b)|P((b,+\infty))\Bigr)+\varepsilon.
 	 	\end{eqnarray}
	Since $\varepsilon$ can be arbitrarily small, we obtain
  \begin{eqnarray}
 &&    \sup_{x\in X}\Bigg|\inf_{\hat u\in\mathbb B_{[a,b]}(u^0_{\rm truc},r)}
	\left[\hat u(x)+\int_{\xi-x\in[a,b]}\hat u(\xi-x)P(d\xi)
	\right] \nonumber\\
&& \qquad\qquad\qquad\qquad\qquad\qquad\qquad\qquad -\inf_{\tilde{u}_2\in\tilde{\mathbb B}_{[a,b]}(u^0,r+\epsilon)}
	\left[\tilde{u}(x)+\int_{\R}\tilde{u}(\xi-x)P(d\xi)
	\right]\Bigg| \nonumber \\
	&&\leq\sup_{\tilde u\in\tilde{\mathbb B}_{[a,b]}(u^0,r+\epsilon)}(|\tilde u(a)|P((-\infty,a))+|\tilde u(b)|P((b,+\infty)))\leq\epsilon/3.
		\label{eq-proof-unbounded-optimalvalue-distance-thirdterm}
 \end{eqnarray}
	Combining (\ref{eq-proof-unbounded-optimalvalue-distance})
	-(\ref{eq-proof-unbounded-optimalvalue-distance-thirdterm}), we obtain (\ref{eq-result-theorem-unbounded-optimalvalue}) from
	(\ref{eq-conclusion-unbounded-hausdorff}).
	\hfill $\Box$
\end{proof}

{\color{black}
\subsection{Multiattribute utility case} 

The OCE models that we discussed so far are for single attribute decision making.
It might be interesting to ask whether the models can be extended to 
multi-attribute decision making. The answer is yes. Here we present two 
potential extended models. One is to consider  
the case that the utility function has an additive structure, that is, 
the multivariate utility function is the sum of the 
marginal utility functions of each attribute.
Such utility functions are widely used in the literature, 
see e.g.~\cite{KRM93,Abb09,AbS15}. In that case, 
{\color{black}
given $\xi:\Omega\rightarrow\R^m$ and $U:\R^m\rightarrow\R$,
we may define the MOCE as
\begin{equation}
{\rm (MMOCE-A)}\quad \quad   \displaystyle{M_u(\xi):=\sup_{x\in\R^m} \;\; U(x)+\bbe_P[U(\xi-x)]},
\label{eq:OCE-new-a}
\end{equation}
where the multiattribute utility function 
$U(x)=\sum_{i=1}^m u_i(x_i)$ and $u_i:\R\rightarrow\R$ is the marginal
utility function with respect to the $i$th attribute. The formulation can be simplified when the probability distribution of $\xi$ is the product 
of its marginal distributions:
\begin{equation}
\displaystyle{M_u(\xi)=  \sum_{i=1}^m \sup_{x_i\in\R} \;\;\{ u_i(x_i)+\bbe_{P_i}[u_i(\xi_i-x_i)]\}}.
\label{eq:OCE-new-a-1}
\end{equation}
The economic interpretation of the model is that the decision maker might have 
a portfolio of random assets $x_i$, $i=1,\cdots m$ and the 
DM would like to cash out $x_i$ from asset $i$. The marginal utilities may be the same or
different. Problem 
(\ref{eq:OCE-new-a-1})
is decomposable as it stands, thus it retains the properties outlined in \Cref{sec:Properties} and 
can be calculated by calculating 
$m$ single attribute MOCE simultaneously. 

When the utility function 
is non-additive, 
we may consider the following model:
\begin{equation}
{\rm (MMOCE-B)}\quad \quad   \displaystyle{M_u(\xi):=\sup_{t\in\R_+} \;\; \{u(td)+\bbe_P[u(\xi-td)]\}},
\label{eq:OCE-new-b}
\end{equation}
where $d$ is a fixed vector of weights. In this model, cash to be taken out from the assets
is in a prefixed proportion.  (MMOCE-B) is essentially a single variate 
MOCE model. 
Note that it is possible to further extend model (MMOCE-A) by replacing 
deterministic vector $x$ with a random vector $X$:
\begin{equation}
{\rm (MMOCE-A')}\quad \quad   \displaystyle{M_u(\xi):=\sup_{X} \;\; \bbe[U(X)]+\bbe[U(\xi-X)]}.
\label{eq:OCE-new-c}
\end{equation}
This kind of model has potential applications in finance where  
a firm detaches risk assets from non-risky assets in 
order to reduce the systemic risk
\cite{WXM21scenario}.
In that context, problem (MMOCE-A') is to find 
optimal separation $X$ from the existing overall portfolio of assets $\xi$. 
The problem is intrinsically two-stage, one may 
use linear/polynomial decision rule \cite{BaK11}
or K-adapativity method \cite{BeC07} 
 to obtain a (MMOCE-A)-version of approximation. 
Note also that model (MMOCE-A') is related to the IDR-based 
CDE model recently studied by Qi et al.~\cite{QCLP19} who use OCE  for optimizing individualised medical treatment. Since all of the  extended models outlined above require much more detailed analysis, we leave them for future research.

}

{\color{black}
	\section{Quantitative statistical robustness}
	\label{sec:quantitative}
	
	\subsection{Motivation}
	{\color{black} In \Cref{sec:numer-methods}, we discuss in detail how to
	obtain an approximate solution of (RMOCE) (to ease reading, we repeat the model here): 
	\begin{equation}
	\label{eq:RMOCE-P}
	{\rm (RMOCE-P)}\quad \quad \displaystyle{ R(P):=\max_{x\in X}\inf_{u\in\mathcal{U}} \bbe_P[u(x)+u(\xi-x)]},
	\end{equation}
	where $\xi$ follows probability distribution $P$.
	A key assumption is that the true probability distribution $P$ is known and discretely distributed.
	   This assumption may not be satisfied in  data-driven problems where
	   the true $P$ is unknown, and  one often uses empirical data to 
	   construct an approximation of $P$. Even worse is that such data may be contaminated.
	   
	   Let $\tilde \xi^1,...,\tilde \xi^N$ denote the contaminated empirical data (we call them {\em perceived data} and we use $N$ to denote the size of samples rather than number of breakpoints without causing confusion henceforth).
	   	   Let $Q_N:=\frac{1}{N}\sum_{i=1}^N\delta_{\tilde \xi_i}$ be 
	   	   the empirical distribution constructed with 
	   	   the perceived data, where $\delta_{\tilde {\xi}_i}$ is the Dirac measure at $\tilde {\xi}_i$.
	   We use the perceived data to solve the RMOCE model (assume that the model is solved precisely without computational error): 
	   	\begin{equation}
	\label{eq-empirical-RMOCE}
	{\rm (RMOCE-Q_N)}\quad \quad \displaystyle{ R(Q_N):=\max_{x\in X}\inf_{u\in\mathcal{U}} \bbe_{Q_N}[u(x)+u(\xi-x)]}.
	\end{equation}
	We then ask ourselves 
as to whether $R(Q_N)$  is a good estimation of $R(P)$ from statistical point of view.
This question is concerned with data perturbation rather than modelling/computational errors as discussed  in \Cref{sec:numer-methods}.

	To proceed the analysis, we introduce another empirical distribution, denoted by $P_N:=\frac{1}{N}\sum_{i=1}^N\delta_{\xi_i}$,
	which is constructed by the purified perceived data $\xi^1,...,\xi^N$ (the noise in the 
	perceived data is detached, we call them {\em real data} henceforth). 
	In practice, it is impossible to detach the noise, we introduce the notion purely for 
	the convenience of statistical analysis. 
	Let $R(P_N)$ be the optimal value of
	(RMOCE-P) by 
	replacing $P$ with $P_N$. 
	By the classical law of large numbers, we know that $P_N\to P$ and 
	$R(P_N)\to R(P)$ under moderate conditions. Thus in the literature of 
	stochastic programming, $R(P_N)$ is called a statistical estimator of $R(P)$
	and here we emphasize that this estimator is based on real data.
	
	Our question is then whether $R(Q_N)$ is close to $R(P_N)$ because the former is the only quantity that we are able to obtain. To address this question, we assume
	the perceived data are iid which means $Q_N\to Q$ for some $Q$ as $N\to \infty$.
	In other words, the perceived data may be viewed as if they are generated by the 
	invisible distribution $Q$. Let $R(Q)$ denote the optimal value of (RMOCE) with $P$ being replaced by $Q$. We then have 
	$$
    R(Q_N) -R(P_N) =  R(Q_N) -R(Q) + R(Q)-R(P)+ R(P)-R(P_N).
    $$
Thus if $ R(Q_N) \to R(Q)$ as $N\to\infty$ 
uniformly for all $Q$ close to $P$ 
and $R(Q)\to R(P)$ as $Q\to P$, then $R(Q_N)$ is close $R(P_N)$.
This explains roughly the motivation of this section. The formal quantitative statistical robust analysis is a bit more complex as we will examine the difference 
between the probability distributions of $R(Q_N)$ and $R(P_N)$ under some metric rather than 
estimating $ R(Q_N) -R(P_N)$ for each given set of perceived data.

}

\subsection{Statistical analysis}
	For any two probability measures $P, Q\in\mathscr{P}(\R)$, define the pseudo-metric between $P$ and $Q$ by
	\begin{equation}
	    \label{eq-zeta-metric}
	    \dd_{\mathscr{G}}(P,Q):=\sup_{g\in\mathscr{G}}\bigl|\bbe_{P}[g(\xi)]-\bbe_{Q}[g(\xi)]\bigr|.
	\end{equation}
	{\color{black} It can be seen that $\dd_{\mathscr{G}}(P,Q)$ is the maximal difference between the expected values of the class of measurable functions $\mathscr{G}$ with respect to $P$ and $Q$.}
	The specific pseudo metrics that we consider in this paper are the Fortet-Mourier metric and the Kantorovich metric.
	{\color{black}
	Recall that the $p$-th order Fortet-Mourier metric with $p\geq1$
	for $P,Q\in\mathscr P(\R)$:
	\begin{equation}
	\label{eq-fortet-moutier}
	\dd_{FM,p}(P,Q):=\sup_{g\in\mathscr G_p(\R)} \left|\int_{\R} g(\xi)P(d\xi)-\int_{\R} g(\xi)Q(d\xi)\right|,
	\end{equation}
	where 
	$$\mathscr G_p(\R) := \{g:\R\rightarrow\R\mid|g(\xi)-g(\tilde{\xi})|\leq c_p(\xi,\tilde{\xi})\|\xi-\tilde{\xi}\|, \forall \xi,\tilde{\xi}\in\R\}$$
	and 
	$$
	c_p(\xi,\tilde{\xi}):=\max\{1,\|\xi\|,\|\tilde{\xi}\|\}^{p-1},
	\quad \forall \xi,\tilde{\xi}\in\R.
	$$
	}
	{\color{black}
	When $p=1$, the functions in $\mathscr G_p(\R)$ are globally Lipschitz continuous with modulus $1$ and $\mathscr G_p(\R)$ coincides with $\mathscr G_K$ in (\ref{eq-kantorovich}). 
	Thus $\dd_{FM,1}(P,Q)=\dd_K(P,Q)$.
	For more details, see \cite{GiS02,Rom03,WXM21SR}.
	}
	
	
	To get the statistical robustness result, let $\R^{\otimes N}$ and $\mathcal B(\R)^{\otimes N}$ denote the Cartesian product $\R\times\cdot\cdot\cdot\times\R$ and its Borel sigma algebra. Let $P^{\otimes N}$ denote the probability measure on the measurable space $(\R^{\otimes N},\mathcal B(\R)^{\otimes N})$ with marginal $P$ on each $(\R,\mathcal B(\R))$ and $Q^{\otimes N}$ with marginal $Q$. 
	{\color{black}
	Now we can state the definition of statistical robustness of a statistic estimator, which 
	is proposed in \cite{GuX21,WXM21SR}.
	
\begin{definition}[Quantitative statistical robustness]
\label{def-statisticalrobustness}
Let
${\cal M} \subset \mathscr{P}(\R)$ be a set of probability measures.
 A sequence of statistical estimators $\hat{T}_N$ is said to be quantitatively statistically robust
on  $\mathcal{M}$ w.r.t. $(\dd_K,\dd_{FM,p})$
if 
there exists a positive constant $C$
such that for all $N$
\begin{eqnarray}
\label{eq:def-QSR}
\dd_K(P^{\otimes N}\circ \hat{T}_N^{-1},
Q^{\otimes N}\circ \hat{T}_N^{-1})\leq C\dd_{FM,p}(P,Q)<+\infty,
\; \forall P, Q \in \mathcal{M},
\end{eqnarray}
where $\dd_K$ is the Kantorovich metric on $\mathscr{P}(\R)$ and $\dd_{FM,p}$ is the Fortet-Mourier metric on $\mathscr{P}(\R)$.
\end{definition}

		Here 
		$P^{\otimes N}\circ\hat{T}_N^{-1}$
		and  $Q^{\otimes N}\circ\hat{T}_N^{-1}$
		are probability measures/distributions
		on $\R$.
	}
		{\color{black}
	The 
	next theorem states quantitative statistical robustness 
	of $\hat{R}_N :=R(Q_N)$.
	
	}
	\begin{theorem}
		Assume: (a) There exists a positive constant $L>0$ such that for all $x\in X$ and $u\in\mathcal U$,
		$$
		|u(\xi-x)-u(\xi'-x)|\leq L\max\left\{1,|\xi|,|\xi'|\right\}^{p-1}|\xi-\xi'|,
		$$
		(b)
		set $\mathcal U$ is chosen such that $\psi(t):=\sup_{u\in\mathcal U}|u(t)|$ is a gauge function, that is, $\psi:\R\rightarrow[0,\infty)$ is  continuous and $\psi\geq1$ holds outside a compact set.
		Then for any $N\in \Bbb{N}$,
		\begin{equation}
		\label{eq:conclusion-quantitative-statistical-robustness}
		\displaystyle{\dd_K(P^{\otimes N}\circ \hat{R}_N^{-1}, Q^{\otimes N}\circ \hat{R}_N^{-1})\leq L\dd_{FM,p}(P,Q), \forall P,Q\in\mathcal M^{\phi}},
		\end{equation}
		where $\phi(\xi):=C_0+L(|\xi|+|\xi|^p)$ for some constant $C_0>1$.
	\end{theorem}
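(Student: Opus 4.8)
The plan is to recognize $\hat R_N = R(Q_N)$ as the plug-in estimator associated with the statistical functional $R:\mathcal{M}^\phi\to\R$, $R(P):=\max_{x\in X}\inf_{u\in\mathcal U}\bbe_P[u(x)+u(\xi-x)]$, and to deduce the quantitative statistical robustness bound from Lipschitz continuity of this functional with respect to $\dd_{FM,p}$. Accordingly I would split the argument into two stages: first establishing that $R$ is $L$-Lipschitz on $\mathcal{M}^\phi$, i.e.\ $|R(P)-R(Q)|\leq L\,\dd_{FM,p}(P,Q)$; and then transporting this modulus to the law of the estimator on the product space via a tensorization argument.

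For the Lipschitz continuity, the key structural observation is that the consumption term $u(x)$ does not depend on $\xi$, so it cancels in any comparison of $R(P)$ and $R(Q)$ at a common pair $(x,u)$. Using the elementary bounds $|\max_x f(x)-\max_x h(x)|\leq\sup_x|f(x)-h(x)|$ and $|\inf_u f(u)-\inf_u h(u)|\leq\sup_u|f(u)-h(u)|$ applied to the outer maximum over $x\in X$ and the inner infimum over $u\in\mathcal U$, I would reduce the difference to
\begin{equation*}
|R(P)-R(Q)|\leq\sup_{x\in X}\sup_{u\in\mathcal U}\bigl|\bbe_P[u(\xi-x)]-\bbe_Q[u(\xi-x)]\bigr|.
\end{equation*}
Assumption (a) says precisely that $\xi\mapsto u(\xi-x)$ satisfies the Fortet--Mourier Lipschitz condition of order $p$ with modulus $L$, so $\tfrac1L\,u(\cdot-x)$ lies in the test class $\mathscr G_p$ for every $x$ and $u$; hence each inner difference is bounded by $L\,\dd_{FM,p}(P,Q)$ and the reduction above yields the claimed Lipschitz estimate. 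Assumption (b) enters here to guarantee well-definedness: the gauge $\psi(t)=\sup_u|u(t)|$ together with (a) gives $\psi(\xi-x)\leq C+L(|\xi|+|\xi|^p)\leq\phi(\xi)$ for $x$ ranging over the compact $X$, so the expectations are finite for $P\in\mathcal{M}^\phi$ and $R$ takes finite values; it also makes $\phi$ an integrable envelope for the order-$p$ test functions, ensuring $\dd_{FM,p}(P,Q)<\infty$ on $\mathcal{M}^\phi$.

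The second stage passes from the functional modulus to the distributional bound, and this is where I expect the real technical care to lie, the problem-specific content being already absorbed into Stage 1. Writing $\dd_K$ on the left via Kantorovich--Rubinstein duality as a supremum over $1$-Lipschitz $g$, I would study $\Psi(\xi_1,\dots,\xi_N):=g\bigl(R(\tfrac1N\sum_i\delta_{\xi_i})\bigr)$ on $\R^{\otimes N}$. Perturbing a single coordinate $\xi_k\mapsto\xi_k'$ changes the empirical measure by $\tfrac1N(\delta_{\xi_k'}-\delta_{\xi_k})$, so by Stage 1 and the definition of $\dd_{FM,p}$ the map $\Psi$ is coordinatewise Fortet--Mourier Lipschitz with modulus $L/N$. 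Then a hybrid telescoping over the mixed products $\mu_k:=P^{\otimes k}\otimes Q^{\otimes(N-k)}$, $k=0,\dots,N$, reduces $\bbe_{P^{\otimes N}}[\Psi]-\bbe_{Q^{\otimes N}}[\Psi]$ to a sum of $N$ single-coordinate differences, each controlled by $(L/N)\,\dd_{FM,p}(P,Q)$; summing gives exactly $L\,\dd_{FM,p}(P,Q)$, uniformly in $g$, which is (\ref{eq:conclusion-quantitative-statistical-robustness}). The delicate points are confirming that the rescaled single-coordinate section of $\Psi$ belongs to $\mathscr G_p$, so that the order-$p$ weight $c_p$ matches the growth permitted by assumption (a), and checking that no constant is lost in the telescoping so the final modulus is precisely $L$ rather than $NL$. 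This tensorization step may alternatively be cited directly from Guo and Xu \cite{GuX21} and \cite{WXM21SR}, in which case verifying their hypotheses reduces to exactly the Lipschitz and gauge properties established in Stage 1.
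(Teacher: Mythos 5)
Your proposal is correct and follows essentially the same route as the paper: condition (a) is used to make $u(\cdot-x)/L$ a Fortet--Mourier test function (giving the $L$-Lipschitz continuity of the functional $R$ with respect to $\dd_{FM,p}$), condition (b) supplies the gauge $\phi$ guaranteeing finiteness of all the integrals, and the coordinate-wise modulus $L/N$ of $\boldsymbol{\xi}^N\mapsto g(\hat{R}(\boldsymbol{\xi}^N))$ is lifted to the product space to obtain the bound $L\,\dd_{FM,p}(P,Q)$. The only cosmetic difference is that you unpack the tensorization step via the hybrid telescoping over $P^{\otimes k}\otimes Q^{\otimes(N-k)}$, whereas the paper delegates exactly this step to the cited Lemma 4.4 of \cite{WXM21SR}, an alternative you yourself note.
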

	
	\begin{proof}
		By definition
		\begin{eqnarray}
		&&\dd_K(P^{\otimes N}\circ\hat{R}_N^{-1}, Q^{\otimes N}\circ\hat{R}_N^{-1}) \nonumber\\
		&& =\sup_{g\in\mathscr{G}_K}\left|\int_{\R}g(t)P^{\otimes N}\circ\hat{R}_N^{-1}(dt)-\int_{\R}g(t)Q^{\otimes N}\circ\hat{R}_N^{-1}(dt)\right| \nonumber\\
		&& =\sup_{g\in\mathscr{G}_K}\left|\int_{\R^{\otimes N}}g(\hat{R}(\boldsymbol{\xi}^N))P^{\otimes N}(d\boldsymbol{\xi}^N)-\int_{\R^{\otimes N}}g(\hat{R}(\boldsymbol{\xi}^N))Q^{\otimes N}(d\boldsymbol{\xi}^N)\right|,
		\label{eq:Kantorovich-distance}
		\end{eqnarray}
		where $\boldsymbol{\xi}^N=(\xi^1,...,\xi^N)$ and we write $\hat R(\boldsymbol{\xi}^N)$ for $\hat R_N$.
		To see the well-definedness of the pseudo-metric, notice that for every $g\in\mathscr G_K$ and a fixed $\boldsymbol{\xi}_0^N\in\R^{\otimes N}$
		\begin{equation}
		\label{eq:lipschitz-g}
		|g(\hat{R}(\boldsymbol{\xi}^N))|\leq|g(\hat{R}(\boldsymbol{\xi}_0^N))|+|\hat{R}(\boldsymbol{\xi}^N)-\hat{R}(\boldsymbol{\xi}_0^N)|,
		\end{equation}
		where $\boldsymbol{\xi}_0^N\in\R^{\otimes N}$ is fixed.
		From condition (b) and nondecreasing property of $u$, there exists a positive number $C_0$ such that
		\begin{equation}
		\label{eq:upperbound-objecticve}
		\sup_{u\in\mathcal U, x\in X}|u(x)+u(\xi-x)|\leq C_0+L(|\xi|+|\xi|^p), \forall \xi\in\R.
		\end{equation}
		By the definition of $\hat{R}(\boldsymbol{\xi}^N)$, it follows that
		$$
		|\hat{R}(\boldsymbol{\xi}^N)|=\left|\max_{x\in X} \inf_{u\in\mathcal U} \frac{1}{N}\sum_{k=1}^N[u(x)+u(\xi^k-x)]\right|\leq\frac{1}{N}\sum_{k=1}^N\phi(\xi^k).
		$$
		Moreover,
		\begin{eqnarray}
		\label{eq:boundness-vN}
		\int_{\R^{\otimes N}} |\hat{R}(\boldsymbol{\xi}^N)|P^{\otimes N}(d\boldsymbol{\xi}^N) &\leq & \int_{\R^{\otimes N}}\frac{1}{N}\sum_{k=1}^N\phi(\xi^k)P^{\otimes N}(d\boldsymbol{\xi}^N) \nonumber \\
		&=& \int_{\R}\phi(\xi)P(d\xi)<\infty, \forall P\in\mathcal M^{\phi},
		\end{eqnarray}
		where the equality holds due to the fact that $\xi^1,...,\xi^N$ are independent and identically distributed.
		Combining (\ref{eq:lipschitz-g}) and (\ref{eq:boundness-vN}) we can obtain
		$$
		\int_{\R^{\otimes N}}g(\hat{R}(\boldsymbol{\xi}^N))P^{\otimes N}(d\boldsymbol{\xi}^N)<\infty, \forall P\in\mathcal M^{\phi}.
		$$
		Similar argument can be made on $\int_{\R^{\otimes N}}g(\hat{R}(\boldsymbol{\xi}^N))Q^{\otimes N}(d\boldsymbol{\xi}^N)$ for any $Q\in\mathcal M^{\phi}$.
		Next,
		for any $P, Q\in\mathcal M^{\phi}$,
		\begin{eqnarray*}		
			|R(P)-R(Q)| &=& \left|\sup_{x\in X}\inf_{u\in\mathcal U} \{u(x)+\bbe_P[u(\xi-x)]\}
		-\sup_{x\in X}\inf_{u\in\mathcal U } \{u(x)+\bbe_Q[u(\xi-x)]\}\right| \\
		&\leq&  \sup_{x\in X}\sup_{u\in\mathcal U} |\bbe_P[u(\xi-x)]-\bbe_Q[u(\xi-x)]| \\
		&\leq& \dd_{FM,p} (P,Q),
		\end{eqnarray*}
		where the last inequality follows from condition (a).
		Then we can obtain
		\begin{multline}
			|g(\hat{R}(\tilde{\xi}^1,...,\tilde{\xi}^N)-g(\hat{R}(\hat{\xi}^1,...,\hat{\xi}^N))|
			\leq
			|\hat{R}(\tilde{\xi}^1,...,\tilde{\xi}^N)-\tilde{R}(\hat{\xi}^1,...,\hat{\xi}^N)| \\
			\leq\frac{1}{N}\sum_{k=1}^N \sup_{x\in X, u\in\mathcal U} |u(\tilde\xi^k-x)-u(\hat\xi^k-x)|
			\leq
			\frac{L}{N}
			\sum_{k=1}^N\max\{1+|\tilde{\xi}^k|+|\hat{\xi}^k|\}^{p-1}|\tilde\xi^k-\hat\xi^k|.
			\nonumber
		\end{multline}
		It follows by \cite[Lemma 4.4]{WXM21SR} that 
		\begin{eqnarray}
		(\ref{eq:Kantorovich-distance}) \leq \dd_{FM,p}(P^{\otimes N},Q^{\otimes N}) \leq L\dd_{FM,p}(P,Q)
		\end{eqnarray}
		and hence
		inequality (\ref{eq:conclusion-quantitative-statistical-robustness}).
		\hfill $\Box$
	\end{proof}
	
}

\section{Numerical tests}
\label{sec:numerical}
We have carried out some
tests on the numerical schemes for computing RMOCE.
In this section, we report the preliminary numerical results.

The first set of tests are about the comparison between the MOCE model (\ref{eq:OCE-new}) and  OCE model (\ref{eq:OCE}) in
terms of the optimal values and the optimal solutions.
We do so by considering $\xi$ following some specific distributions including 
uniform, Gamma, lognormal and normalized Pareto distribution.
The second set of tests are on the RMOCE model and numerical schemes proposed in \Cref{sec:numer-methods}.
We investigate how
the optimal value and the worst case utility function
in the RMOCE model
change as the radius of ambiguity set and the number of breakpoints vary.
We use the parallel particle swarm optimization method \cite{KeE95,MMC11} to solve problem (\ref{eq-RCE-zeta-N-discrete}) and CVX solver to solve inner minimization problem (\ref{eq-nonconcave-subproblem1}).
All the tests are carried out in Matlab R2021a installed on a PC (16GB RAM, CPU 2.3 GHz) with Intel Core i7 processor.

Throughout the section we restrict $\mathscr U$ to a set of all increasing concave utility functions mapping from a compact interval $[a,b]$. We take $[a,b]$ as
the domain of $u$ which is the union of ranges of $x$ and $\xi-x$ for $x\in[\xi_{\min}/2,\xi_{\max}/2]$ by  \Cref{prop-piecewiselinear-optimality} because the number $N$ of breakpoints can guarantee that $\beta_N\leq\xi_{\max}-\xi_{\min}$.
We generate iid samples
$\xi^1,...,\xi^K$ for random variable $\xi$
with equal probabilities $p_k = 1/K$ for $k=1,...,K$.

In the first set of tests of OCE and MOCE,
we set the nomial utility function as $u^0(t)=(1-e^{-2 t})/2$.
\Cref{table-MOCE}
displays the optimal values and the
optimal solutions as well as  the CPU times.
The $3$th and $4$th columns present the optimal values of OCE and MOCE model, and the $5$th and $6$th columns present the optimal solutions of OCE and MOCE model, respectively.
As we can see, the OCE values are consistently larger than the MOCE values, this is because $u(t)<t$. Moreover, we find
the optimal solutions of MOCE problem (under $x^*$) fall within
$[\xi_{\min}/2,\xi_{\max}/2]$ although we have not displayed
the intervals due to the limitation of space. This complies with
\Cref{Prop:Attain-max-RMOCE}.

\begin{table}[!hbp]
\footnotesize
	\setlength{\abovecaptionskip}{8pt}
	\setlength{\belowcaptionskip}{4pt}
	\centering
	\captionsetup{font={scriptsize}}
	\caption{Numerical results of MOCE}
	\renewcommand\arraystretch{1} 
	\begin{tabular}{ccccccc}
		\hline
		Distribution & K & $M_u(\xi)$ &  $S_u(\xi)$ & $x^*$ & $\eta^*$ & CPU time \\
		\hline
		\multirow{3}*{Uniform (-1,1)} & 10 & -0.5590 & -0.4440 & -0.2220 & -0.4441 & 0.8700   \\
		& 100 &  -0.1950 & -0.1782 & -0.0891 & -0.1782 & 0.9914 \\
		& 1000 & -0.3508 & -0.3008 & -0.1504 & -0.3008 & 3.8415 \\
		\hline
		\multirow{3}*{Lognormal (0,1)} & 10 & 0.4929 & 0.6792 & 0.3396 &  0.6792 & 0.4279  \\
		& 100 &  0.5182 & 0.7303 & 0.3651 & 0.7303 & 0.8139  \\
		& 1000 & 0.5313 & 0.7578 & 0.3789 & 0.7578  & 3.7001 \\
		\hline
		\multirow{3}*{Pareto (1,1.5)} & 10 & 0.8692 & 2.0337 & 1.0169 & 2.0337 & 0.4484  \\
		& 100 &  0.8990 & 2.2926 & 1.1463 & 2.2925  & 0.7263 \\
		& 1000 & 0.8942 & 2.2461 & 1.1231 & 2.2461 & 3.6693 \\
		\hline
		\multirow{3}*{Gamma (0.53,3)} & 10 & 0.3392 & 0.4143 & 0.2072 &  0.4143  &  0.5002 \\
		& 100 &  0.4415 & 0.5824 & 0.2912 &0.5825 &  0.7094  \\
		& 1000 & 0.4088 & 0.5255 & 0.2628 & 0.5255 & 3.7729   \\
		\hline
	\end{tabular}
	\label{table-MOCE}
\end{table}

In the second set of tests about RMOCE,
we set the nominal utility as $u^0(t)=(1-e^{-\alpha})/2$ where 
$\alpha\in\R_+$ is a parameter which 
determines the degree of concavity of the utility function.
The number of random samples is fixed at $K=100$ for the uniform distribution and $K=10$ for 
Gamma, lognormal and normalized Pareto distribution.
The parameters of the tests are listed in \Cref{table-parameters}, the 4th column represents the Lipschitz modulus of utility functions. 
For the cases where the random samples are generated by uniform distribution,
\Cref{fig-con-radius-utilityvalue,fig-con-radius-optimalvalue} visualize the worst case utility functions and the optimal values as the radius decreases.
\Cref{fig-con-N} visualizes the change of optimal values as the number of breakpoints increases.
It can be seen that the number of breakpoints has little effect on the optimal value.
For the cases when $\xi$ follows Gamma, lognormal and normalized Pareto distribution,
\Cref{fig-utilityfunction-distributions,fig-optimalvalue-distributions}
visualize the changes of the worst case utility functions and the optimal values as the radius decreases.
We can see that the worst utility function moves closer to the nominal utility function as the radius of the ambiguity
set decreases to zero, the optimal value increases as the radius decreases.
This is because the Kantorovich ball becomes smaller when the radius decreases. In the case that $r=0$, the worst case utility function is the piecewise linear approximation of the nominal utility function.
The error bound of the optimal value is also
depicted in \Cref{fig-con-N,fig-optimalvalue-distributions}, note that the error bound is getting smaller when the number of breakpoints increases in \Cref{fig-con-N}.
\Cref{table-N-runningtime} provides the optimal values and running time for different number of breakpoints.

\begin{table}[!hbp]
\setlength{\abovecaptionskip}{8pt}
\setlength{\belowcaptionskip}{4pt}
\footnotesize
\begin{minipage}{0.45\linewidth}
\centering
\captionsetup{font={scriptsize}}
\caption{Parameters of RMOCE tests}
\begin{tabular}{cccc}
    		\hline
		Distribution & $\alpha$  & N & L\\
		\hline
		Uniform (-1,1)  & 2  & 10 &  30  \\
		Lognormal (0,1)  & 1/2   & 300 & 10  \\
		Pareto (1,1.5) & 1/3  & 300 & 10  \\
		Gamma (0.53,3) & 1/2  & 300 & 10 \\
		\hline
\end{tabular} 
\label{table-parameters}
\end{minipage}
\qquad
\begin{minipage}{0.38\linewidth} 
\centering
\captionsetup{font={scriptsize}}
\caption{Running time for different number of breakpoints}
\begin{tabular}{ccc} 
		\hline
		N & Optimal value & CPU time \\
		\hline
		20 & -108.4846 & 20.7796  \\
		40 & -108.6524  & 24.1097 \\
		60 & -108.5553 & 31.4506 \\
		80 & -108.5657 & 36.7656  \\
		100 & -108.5648 & 41.9548  \\
		\hline
\end{tabular} 
\label{table-N-runningtime}
\end{minipage}
\end{table}

\begin{figure}[!hbp]
\setlength{\abovecaptionskip}{6pt}
\setcaptionwidth{1.5in}
  \begin{minipage}[t]{0.32\textwidth}
    \centering
    \includegraphics[width=5.5cm]{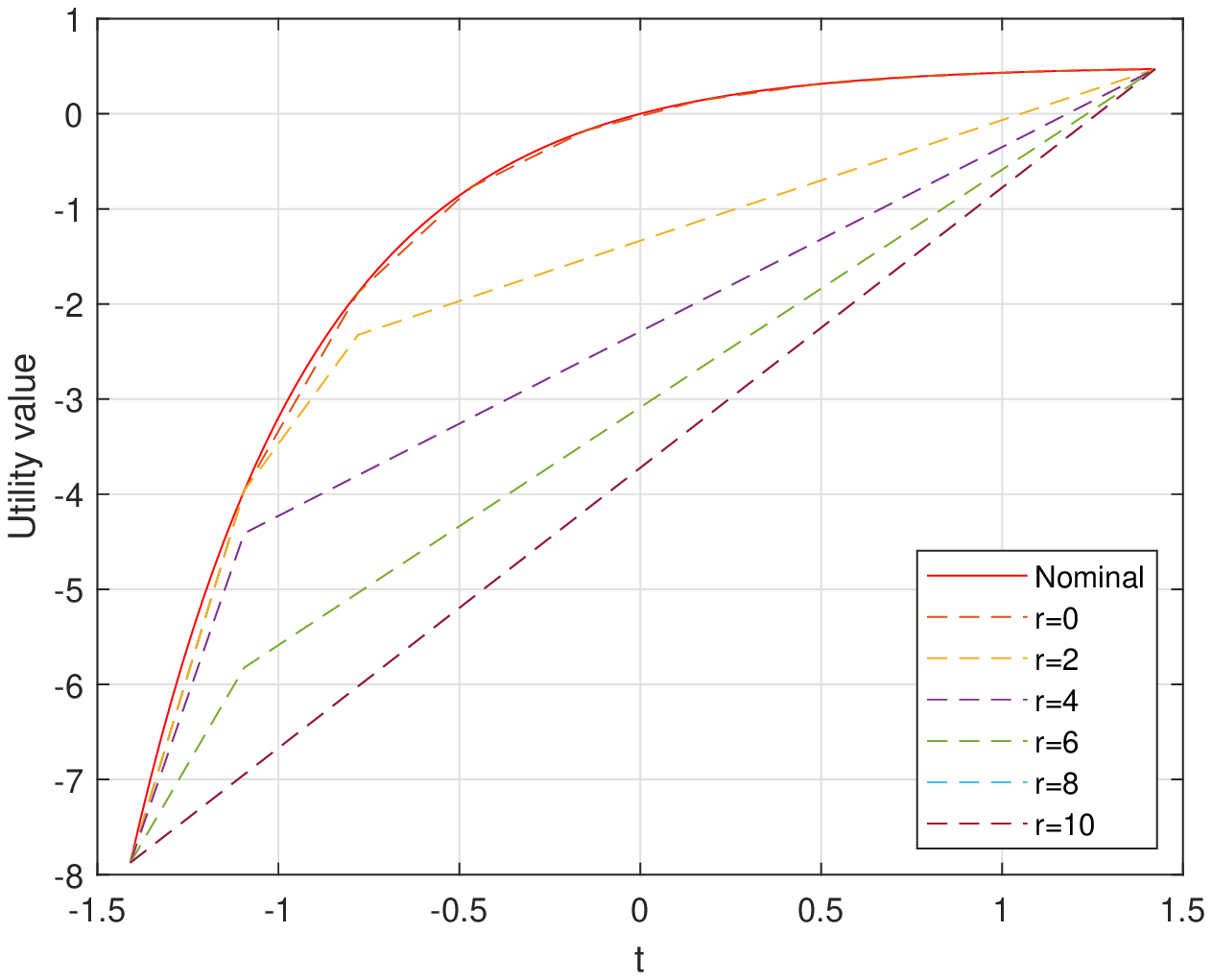}
    \captionsetup{font={scriptsize}}
    \caption{Worst case utility functions with different r}
    \label{fig-con-radius-utilityvalue}
  \end{minipage}%
  \begin{minipage}[t]{0.32\textwidth}
    \centering
    \includegraphics[width=5.5cm]{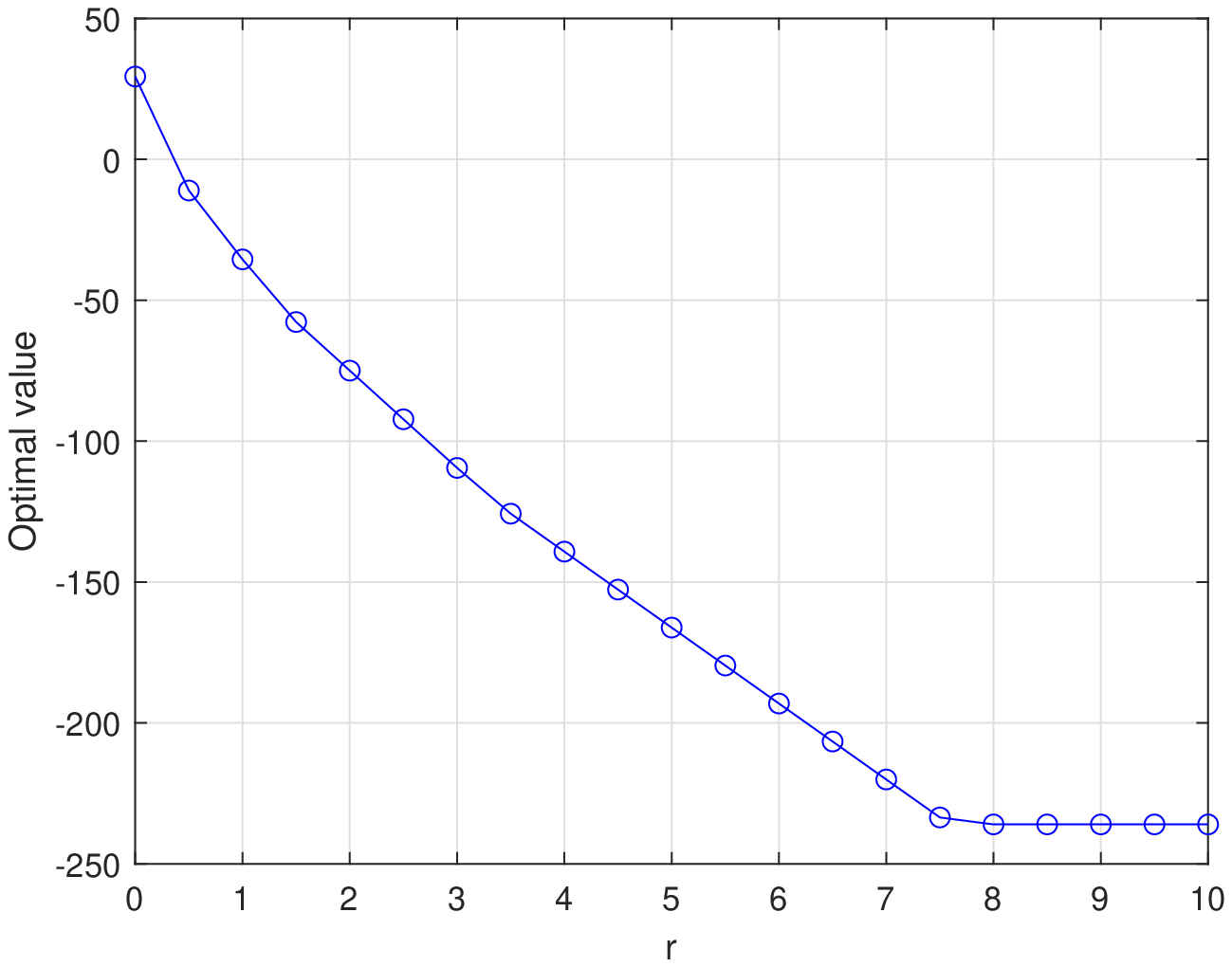}
    \captionsetup{font={scriptsize}}
    \caption{Optimal values with different r}
    \label{fig-con-radius-optimalvalue}
  \end{minipage}
  \begin{minipage}[t]{0.32\textwidth}
    \centering
    \includegraphics[width=5.5cm]{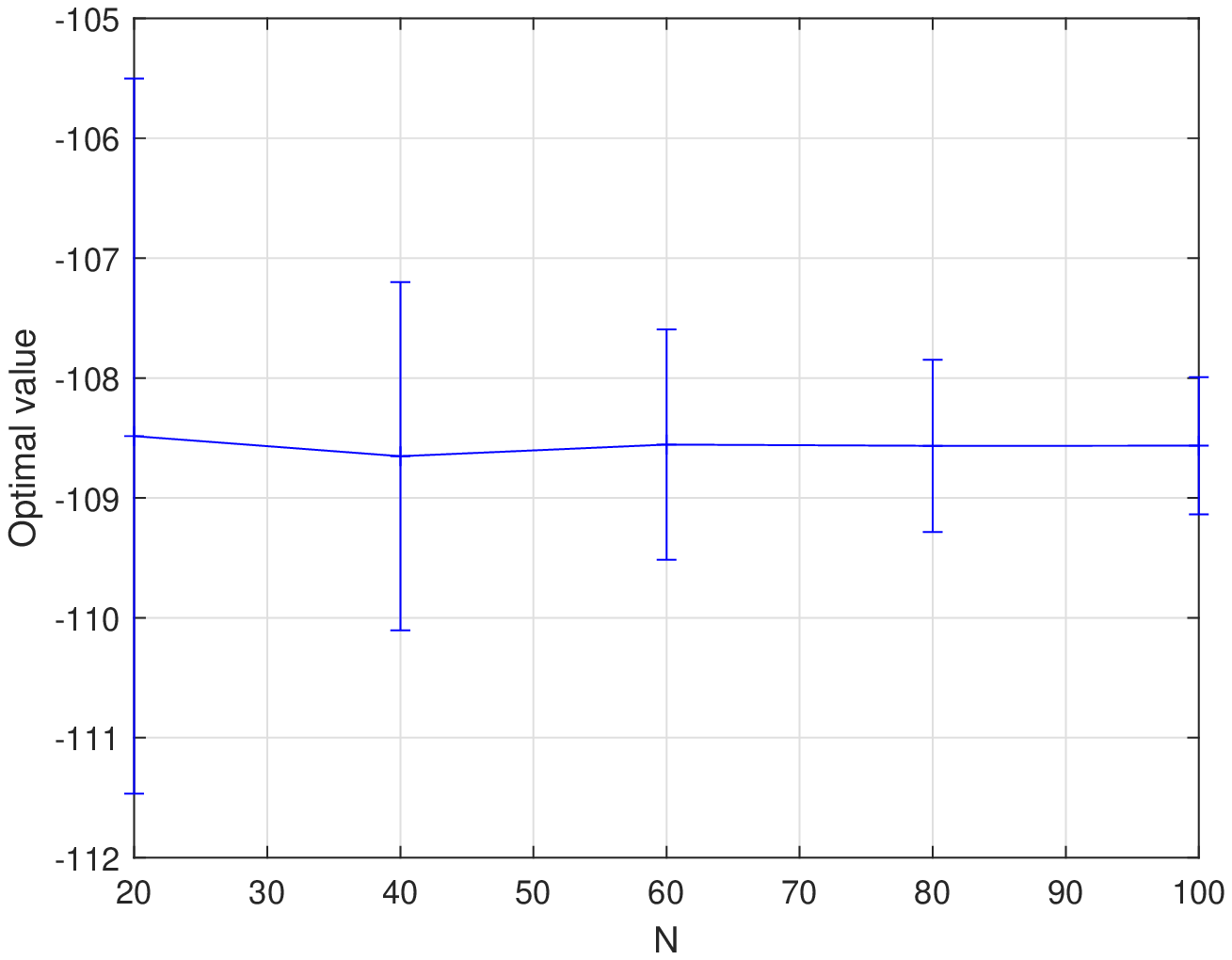}
    \captionsetup{font={scriptsize}}
    \caption{Optimal values with different N}
    \label{fig-con-N}
  \end{minipage}%
\end{figure}


\begin{figure}[!hbp]
	\centering
	\setlength{\abovecaptionskip}{4pt}
	\setlength{\belowcaptionskip}{4pt}
	\begin{minipage}[t]{0.32\textwidth}
		\centering
		\caption*{\scriptsize{lognormal (0,1)}}
		\includegraphics[width=5.5cm]{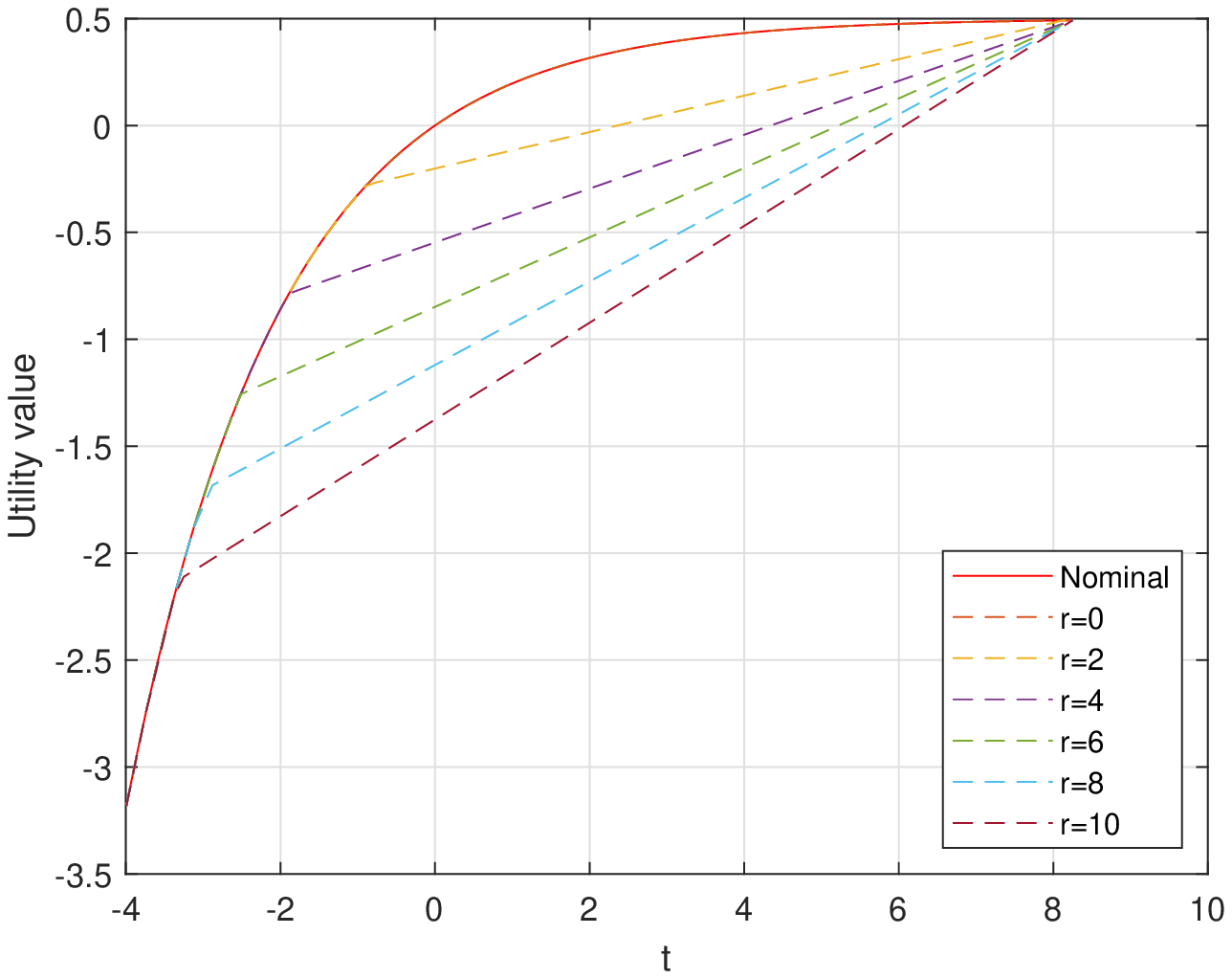}
	\end{minipage}
	\begin{minipage}[t]{0.32\textwidth}
		\centering
		\caption*{\scriptsize{Pareto (1,1.5)}}
		\includegraphics[width=5.5cm]{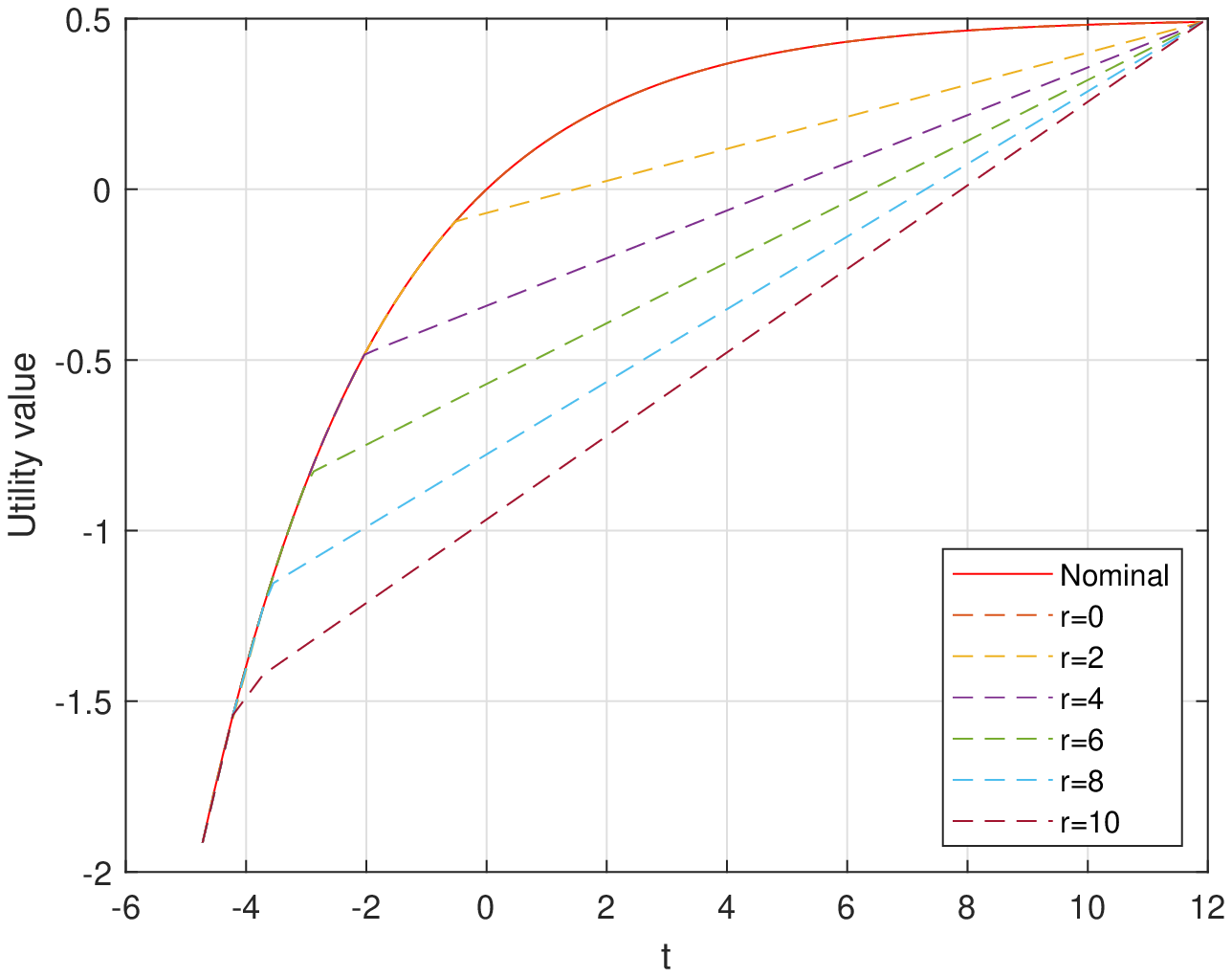}
	\end{minipage}
	\begin{minipage}[t]{0.32\textwidth}
		\centering
		\caption*{\scriptsize{Gamma (0.53,3)}}
		\includegraphics[width=5.5cm]{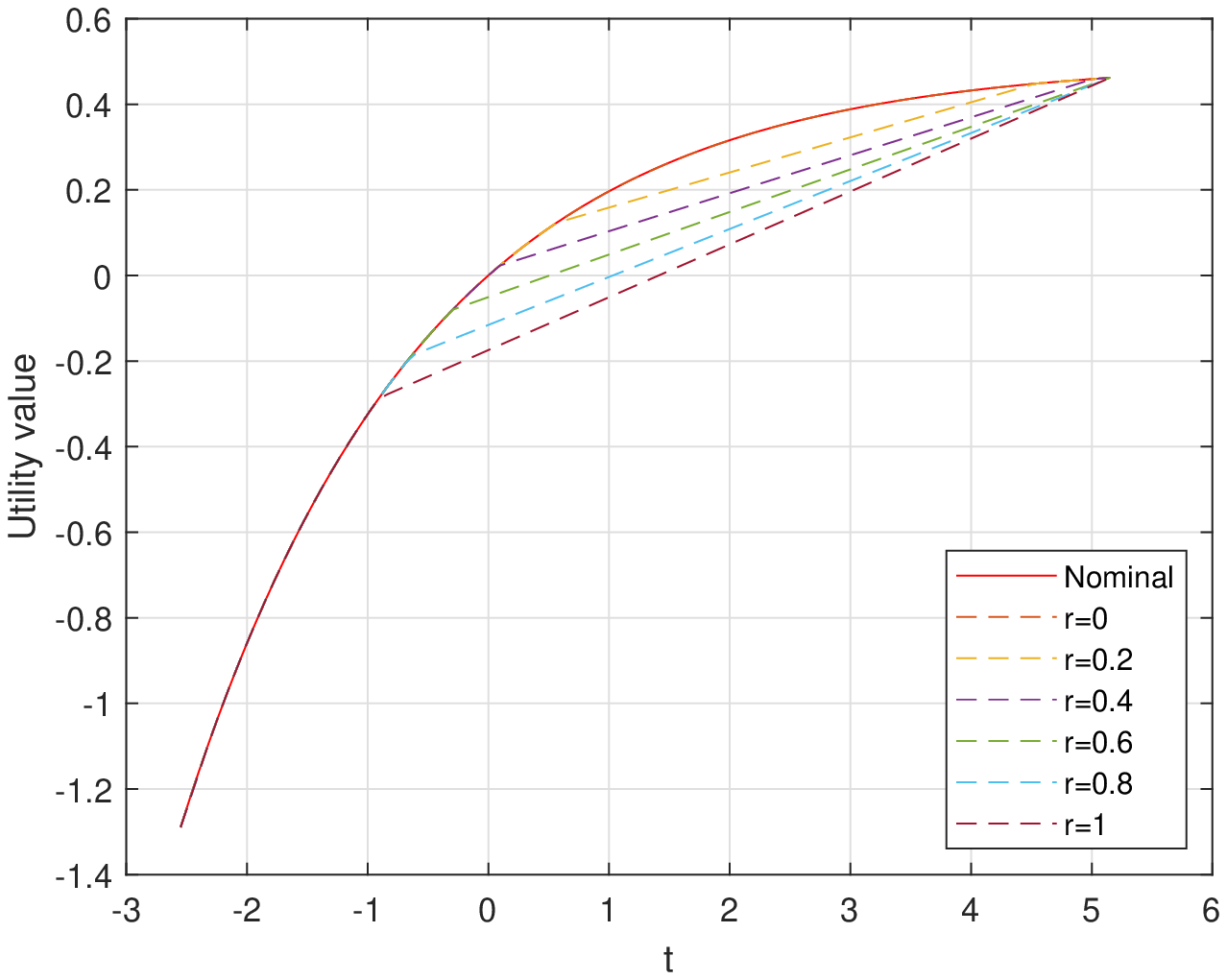}
	\end{minipage}
	\captionsetup{font={scriptsize}}
	\caption{Worst case utility functions with heavy-tailed distributions}
	\label{fig-utilityfunction-distributions}
\end{figure}

\newpage

\begin{figure}[!hbp]
	\centering
	\setlength{\abovecaptionskip}{4pt}
	\setlength{\belowcaptionskip}{4pt}
	\begin{minipage}[t]{0.32\textwidth}
		\centering
		\caption*{\scriptsize{lognormal (0,1)}}
		\includegraphics[width=5.5cm]{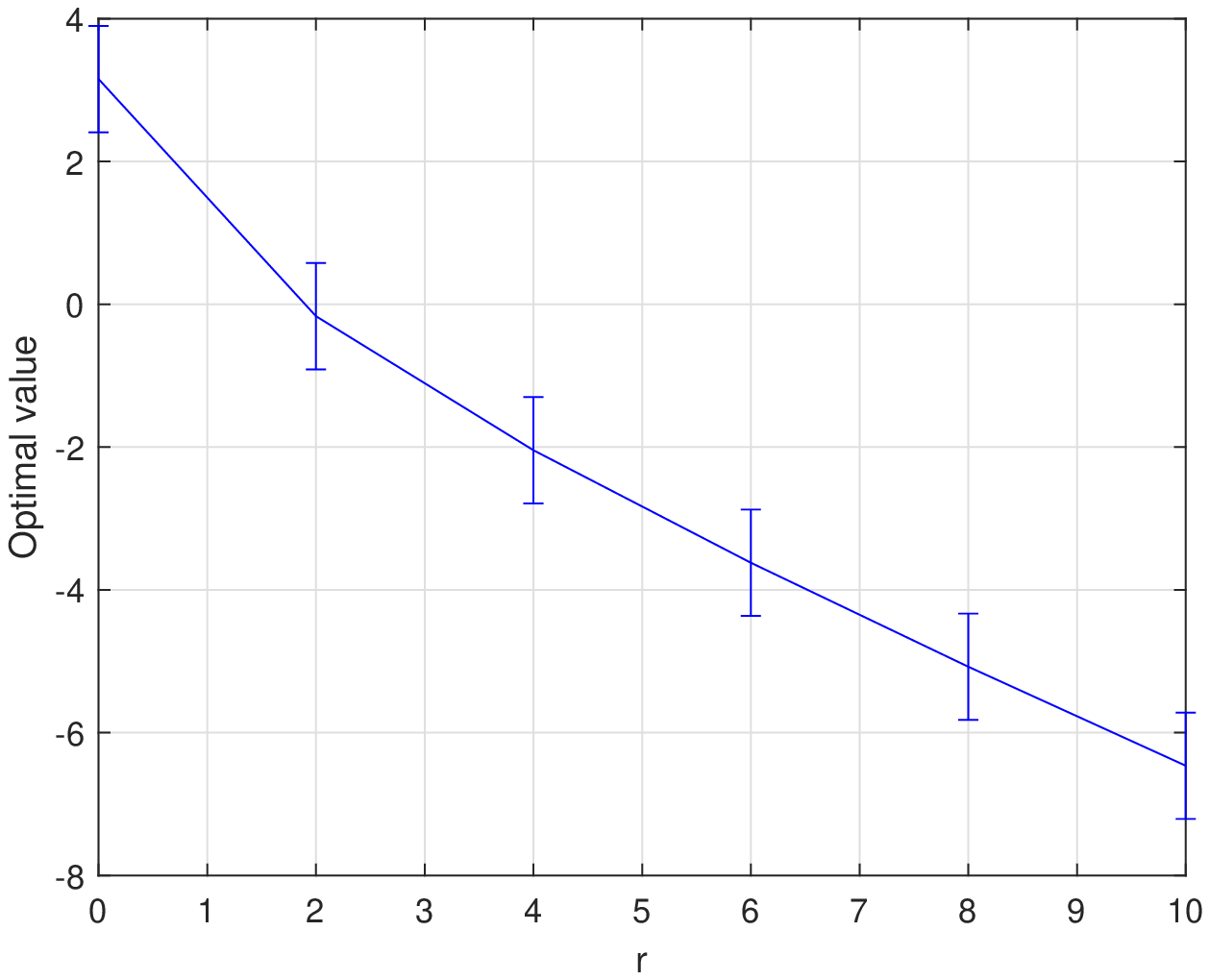}
	\end{minipage}
	\begin{minipage}[t]{0.32\textwidth}
		\centering
		\caption*{\scriptsize{Pareto (1,1.5)}}
		\includegraphics[width=5.5cm]{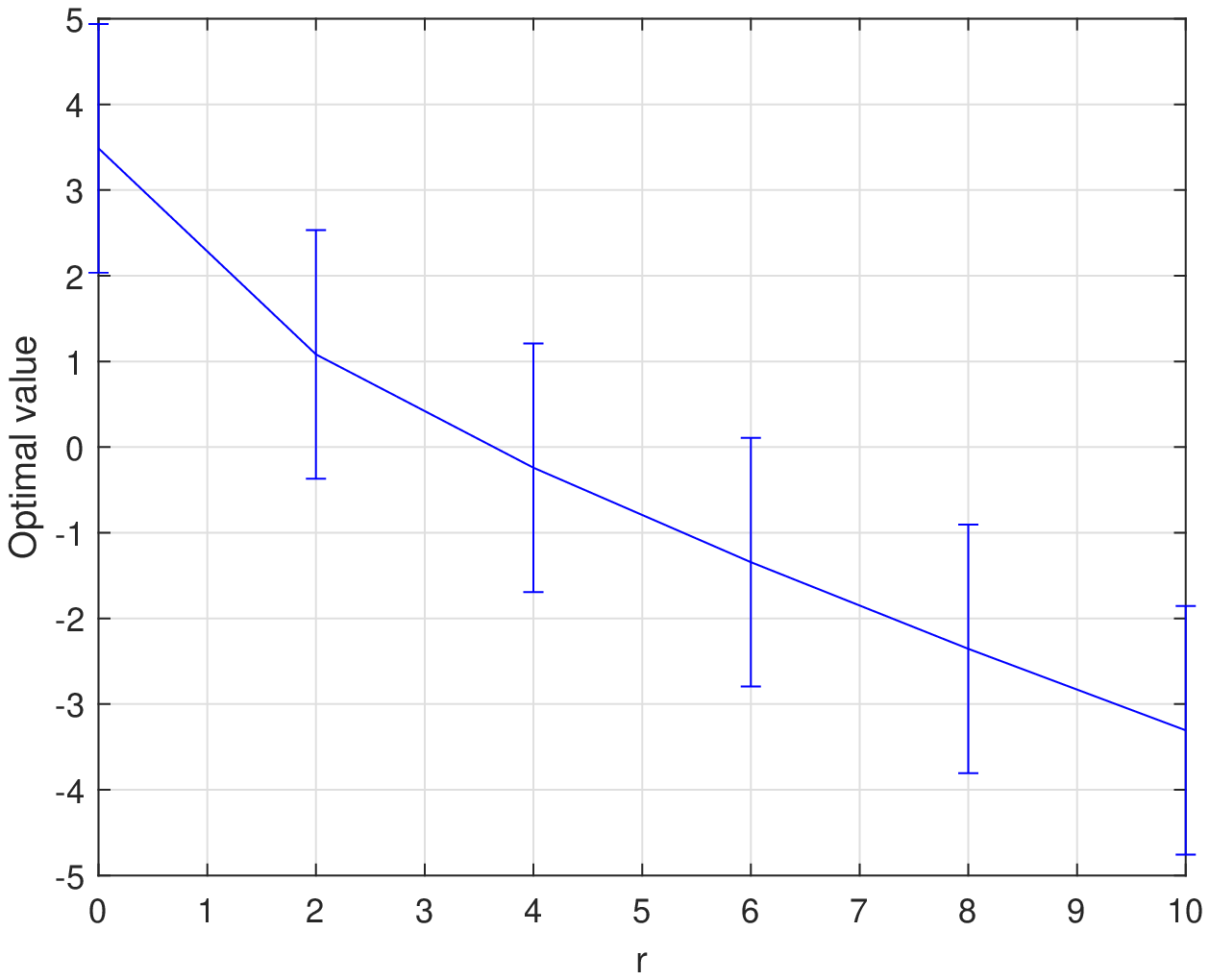}
	\end{minipage}
	\begin{minipage}[t]{0.32\textwidth}
		\centering
		\caption*{\scriptsize{Gamma (0.53,3)}}
		\includegraphics[width=5.5cm]{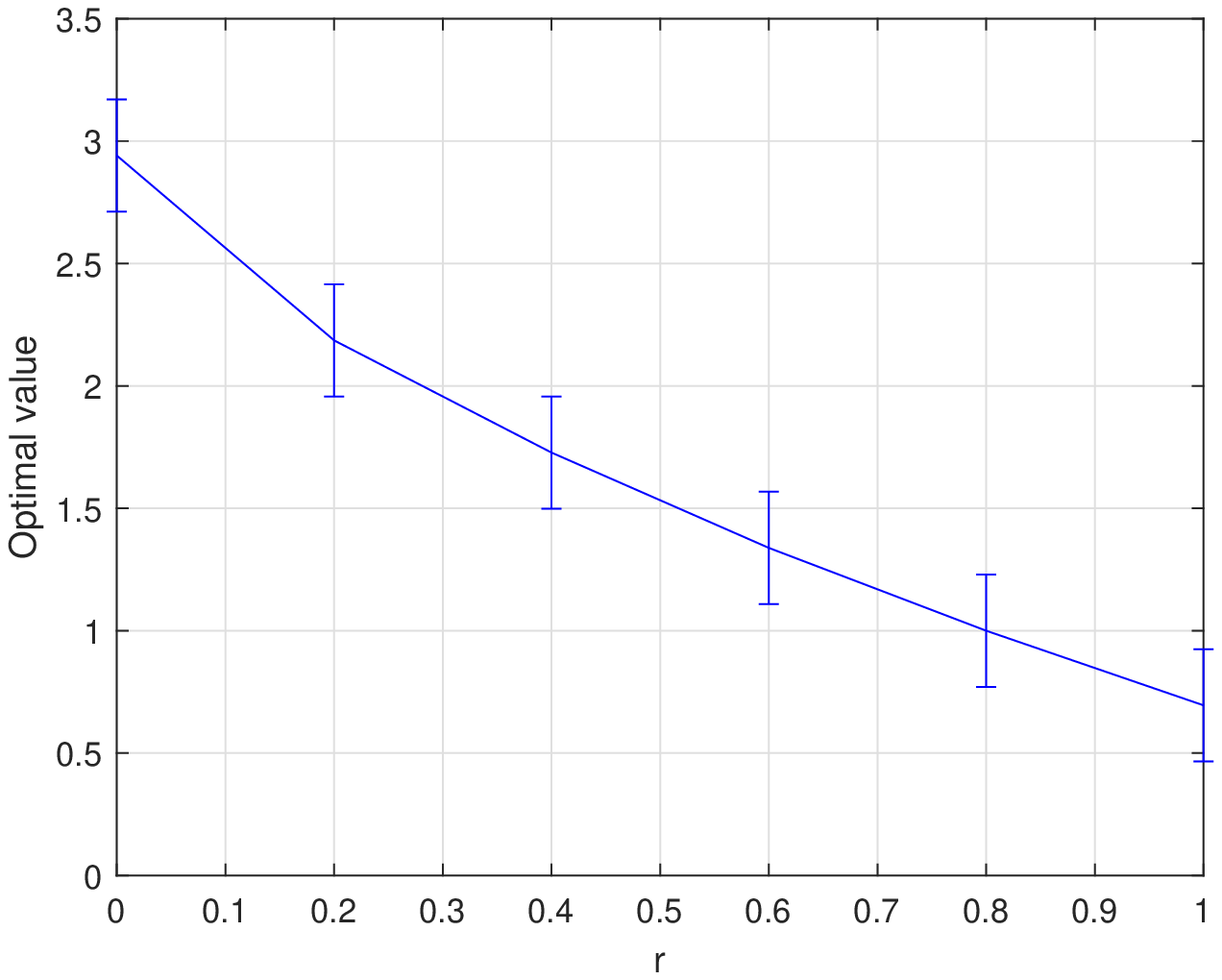}
	\end{minipage}
	\captionsetup{font={scriptsize}}
	\caption{Optimal values with heavy-tailed distributions}
	\label{fig-optimalvalue-distributions}
\end{figure}

\section{Conclusion}
\label{sec:conclu}

{\color{black}
In this paper we explore 
variations of 
the concept of optimized certainty equivalent
with a number of new inputs. 
First, we propose a  modified optimized certainty equivalent (MOCE) model
by considering the utility of present consumption.  
The optimal strategy (which balances the present and future consumption)
    is uniquely determined by the decision maker's risk preference rather than 
    by his/her utility representations (which is not unique). 
    The resulting MOCE value is positive homogeneous in $u$.  
    The MOCE is also in alignment with the  consumption models in economics.
Second, there is a distinction 
    between OCE and MOCE  in terms of the utility functions to be used in the model.
     In the classical OCE model, 
     it requires the utility function to 
     satisfy $u(0)=0$ and $1\in \partial u(0)$. 
     The new MOCE model does not require these conditions.
Third, we propose a preference robust version of the new MOCE model 
for the case that
the decision maker's true utility function is ambiguous. Ambiguity does exist in practice and 
this paper provides a comprehensive treatment
of the preference robust MOCE model from 
modelling to 
computational scheme and underlying theory.
Fourth, in the case that the proposed RMOCE model is applied to data-driven problems where
the underlying exogenous data (samples of $\xi$) are potentially contaminated, we derive sufficient conditions under which the RMOCE calculated with the data is statistically robust. 
Fifth, we outline potential extensions of 
the MOCE model from single decision making to multi-attribute decision making and point out potential applications in asset re-organization.  
In summary, 
this paper provides a new outlook
of OCE in both modelling and analysis,
which complement 
 the existing research 
 in the literature.
}
}



\bibstyle{plain}
\bibliography{RMOCE}

\end{document}